\newtheorem{theorem}{Theorem}[section]
\newtheorem{lemma}[theorem]{Lemma}
\newtheorem{proposition}[theorem]{Proposition}
\newtheorem{corollary}[theorem]{Corollary}
\newtheorem{definition}[theorem]{Definition}
\newtheorem{remark}[theorem]{Remark}
\newcommand{\bb}[1]{{\mathbb #1}}
\newcounter{as}[section]
\renewcommand{\a}{\alpha}
\newcommand{\e}{\varepsilon}
\newcommand{\de}{\delta}
\newcommand{\ga}{\gamma}
\newcommand{\la}{\lambda}
\newcommand{\De}{\Delta}
\newcommand{\lam}{\lambda}
\newcommand{\lan}{\langle}
\newcommand{\ran}{\rangle}
\def\R{{\mathbb{R}}}
\def\E{{\mathcal{E}}}
\def\N{{\mathbb{N}}}
\def\Z{{\mathbb{Z}}}
\def\T{{\mathbb{T}}}
\def\M{{\mathcal{M}}}
\begin{document}

\title{Hydrostatics and dynamical large deviations
for a reaction-diffusion model}
\author{C. Landim, K. Tsunoda}
\address{\noindent IMPA, Estrada Dona Castorina 110, CEP 22460 Rio de
  Janeiro, Brasil and CNRS UMR 6085, Universit\'e de Rouen, Avenue de
  l'Universit\'e, BP.12, Technop\^ole du Madril\-let, F76801
  Saint-\'Etienne-du-Rouvray, France. 
  \newline e-mail: \rm \texttt{landim@impa.br} }
\address{\noindent Institute of Mathematics for Industry, Kyushu University,
744, Motooka, Nishi-ku, Fukuoka, 819-0395, Japan. 
\newline e-mail:  \rm \texttt{k-tsunoda@imi.kyushu-u.ac.jp}}
\subjclass[2010]{Primary 82C22, secondary 60F10, 82C35}
\keywords{Reaction-diffusion equations, hydrostatics,
dynamical large deviations}
\footnote{Abbreviated title $($running head$)$:
LDP for a reaction-diffusion model.}

\begin{abstract}
  We consider the superposition of a symmetric simple exclusion
  dynamics, speeded-up in time, with a spin-flip dynamics in a
  one-dimensional interval with periodic boundary conditions.  We
  prove the hydrostatics and the dynamical large deviation principle.
\end{abstract}

\maketitle

\section{Introduction}

In recent years, the large deviations of interacting particle systems
have attracted much attention as an important step in the foundation
of a thermodynamic theory of nonequilibrium stationary states
\cite{dls, bdgjl1, bd, bdgjl3}. Notwithstanding the absence of
explicit expressions for the stationary states, large deviations
principles for the empirical measure under the stationary state have
been derived from a dynamical large deviations principle \cite{bdgjl2,
  flm, bl}, extending to an infinite-dimensional setting \cite{bg, f}
for the Freidlin and Wentzell approach \cite{fw1}.

We consider in this article interacting particle systems in which a
symmetric simple exclusion dynamics, speeded-up diffusively, is
superposed to a non-conservative Glauber dynamics. De Masi, Ferrari
and Lebowitz \cite{dfl} proved that the macroscopic evolution of the
empirical measure is described by the solutions of
the reaction-diffusion equation
\begin{equation}
\label{i01}
\partial_t\rho \;=\; (1/2) \De \rho + B(\rho) - D(\rho) \;.
\end{equation}
where $\De$ is the Laplacian and $F=B-D$ is a reaction term determined
by the stochastic dynamics. They also proved that the equilibrium
fluctuations evolve as generalized Ornstein-Uhlenbeck processes.

A large deviation principle for the empirical measure has been
obtained in \cite{jlv} in the case where the initial distribution is a
local equilibrium. The lower bound of the large deviations principle
was achieved only for smooth trajectories. More recently, \cite{bl}
extended the large deviations principle to a one-dimensional dynamics
in contact with reservoirs and proved the lower bound for general
trajectories in the case where the birth and the death rates,
$B(\rho)$ and $D(\rho)$, respectively, are monotone, concave
functions.

In this article, we first present a law of large numbers for the
empirical measure under the stationary state \cite{ELS, KLO}. More
precisely, denote by $\mu_N$ the stationary state on a one-dimensional
torus with $N$ points of the superposition of a Glauber dynamics with
a symmetric simple exclusion dynamics speeded-up by $N^2$. This
probability measure is not known explicitly and it exhibits long range
correlations \cite{bj}. Let $V_\epsilon$ denote an
$\epsilon$-neighborhood of the set of solutions of the elliptic
equation
\begin{equation}
\label{i02}
(1/2)\De\rho + F(\rho) \;=\; 0 \;.
\end{equation}
Theorem \ref{hsl} asserts that for any $\epsilon>0$,
$\mu_N(V^c_\epsilon)$ vanishes as $N\to\infty$. In contrast with
previous results, equation \eqref{i02} may not have a unique solution
so that equation \eqref{i01} may not have a global attractor, what
prevents the use of the techniques developed in \cite{flm, mo}.
This result solves partially a conjecture raised in Subsection 4.2 of \cite{bl2}.

The main results of this article concern the large deviations of the
Glauber-Kawasaki dynamics. We first prove a full large deviations
principle for the empirical measure under the sole assumption that $B$
and $D$ are concave functions. These assumptions encompass the case in
which the potential $F(\rho) = B(\rho) - D(\rho)$ presents two or more
wells, and open the way to the investigation of the metastable
behavior of this dynamics. Previous results in this directions include
\cite{dpv, dppv, blm1}.

We also prove that the large deviations rate function is
lower semicontinuous and has compact level sets. These properties play
a fundamental role in the proof of the static large deviation
principle for the empirical measure under the stationary state $\mu_N$
\cite{bg, f}.

The main difficulty in the proof of the lower bound of the large
deviation principle comes from the presence of exponential terms in
the rate function, denoted in this introduction by $I$. In contrast
with conservative dynamics, for a trajectory $u(t,x)$, $I(u)$ is not
expressed as a weighted $H_{-1}$ norm of $\partial_t u - (1/2) \De u -
F(u)$. This forces the development of new tools to prove that smooth
trajectories are $I$-dense.

Both the large deviations of the empirical measure under the
stationary state and the metastable behavior of the dynamics in the
case where the potential admits more than one well are investigated in
\cite{flt} based on the results presented in this article.  \medskip

\noindent{\bf Comments on the proof}.  The proof of the law of large
numbers for the empirical measure under the stationary state $\mu_N$
borrows ideas from \cite{flm, mo}. On the one hand, by \cite{dfl}, the
evolution of the empirical measure is described by the solutions of
the reaction-diffusion equation \eqref{i01}.  On the other hand, by
\cite{cm}, for any density profile $\gamma$, the solution $\rho_{t}$
of \eqref{i01} with initial condition $\gamma$ converges to some
solution of the semilinear elliptic equation \eqref{i02}. Assembling
these two facts, we show in the proof of Theorem \ref{hsl} that the
empirical measure eventually reaches a neighborhood of the set of all
solutions of the semilinear elliptic equation \eqref{i02}.

The proof that the rate function $I$ is lower semicontinuous and has
compact level set is divided in two steps. Denote by $Q(\pi)$ the
energy of a trajectory $\pi$, defined in \eqref{i03}. Following
\cite{qrv}, we first show in Proposition \ref{energy} that the energy
of a trajectory $\pi$ is bounded by the sum of its rate function with
a constant: $Q(\pi) \le C_0 (I(\pi)+1)$. It is not difficult to show
that a sequence in the set $\{\pi : Q(\pi) \le a\}$, $a>0$, which
converges weakly also converges in $L^1$. The lower semicontinuity of
the rate function $I$ follows from these two facts. Let $\pi_n$ be a
sequence which converges weakly to $\pi$. We may, of course, assume
that the sequence $I(\pi_n)$ is bounded. In this case, by the two
results presented above, $\pi_n$ converges to $\pi$ in $L^1$. As the
rate function $I(\cdot)$, defined in \eqref{i04}, is given by
$\sup_{G} J_G (\cdot)$, where the supremum is carried over smooth
functions, and since for each such function $J_G$ is continuous for
the $L^1$ topology, $J_G(\pi) = \lim_n J_G(\pi_n) \le \liminf_n
I(\pi_n)$. To conclude the proof of the lower semicontinuity of $I$,
it remains to maximize over $G$. The proof that the level sets are
compact is similar.

Note that the previous argument does not require a bound of the
$H_{-1}$ norm of $\partial_t \pi$ in terms of $I(\pi)$ and
$Q(\pi)$. Actually, such a bound does not hold in the present
context. For example, let $\rho$ represent the solution of the
hydrodynamic equation \eqref{i01} starting from some initial condition
$\gamma$. Due to the reaction term, the $H_{-1}$ norm of $\partial_t
\rho$ might be infinite, while $I(\rho)=0$ and $Q(\rho)<\infty$.
The fact that a bound on the $H_{-1}$ norm of $\partial_t \pi$ is not
used, may simplify the earlier proofs of the regularity of the rate
function in the case of conservative dynamics \cite{blm2, flm}.

The main difficulty in the proof of the lower bound lies in the
$I$-density of smooth trajectories: each trajectory $\pi$ with finite
rate function should be approachable by a sequence of smooth
trajectories $\pi_n$ such that $I(\pi_n)$ converges to $I(\pi)$. We
use in this step the hydrodynamic equation and several convolutions
with mollifiers to smooth the paths. The concavity of $B$ and $D$ are
used in this step and only in this one.
We emphasize that we can not use Theorem 2.4 in \cite{jlv}
in our setting due to the large deviations
which come from initial configurations.
Therefore we need to prove the $I$-density, Theorem \ref{dense}.
It is possible that the theory
of Orlicz spaces may allow to weaken these assumptions.  Similar
difficulties appeared in the investigation of the large deviations of
a random walk driven by an exclusion process and of the exclusion
process with a slow bond \cite{ajv, fn}.

This article is organized as follows.  In Section 2, we introduce a
reaction-diffusion model and state the main results. In Section 3 we
prove the law of large numbers for the empirical measure under the
stationary state. In Section 4, we present the main properties of the
rate function $I$. In Section 5, we prove that the smooth trajectories
are $I$-dense and we prove Theorem \ref{mt1}, the main result of the
article. In Section 6, we recall some results on the solution of the
hydrodynamic equation \eqref{i01}.

\section{Notation and Results}

Throughout this article, we use the following notation.  $\N_{0}$
stands for the set $\{0,1,\cdots\}$.  For a function $f: X\to \bb R$,
defined on some space $X$, let $\|f\|_\infty = \sup_{x\in X}{|f(x)|}$.
We will use $C_{0}>0$ and $C>0$ as a notation for a generic positive
constant which may change from line to line.

\subsection{Reaction-diffusion model}\label{model}
We fix some notation and define the model.  Let $\T_N$ be the
one-dimensional discrete torus $\Z/N\Z=\{0,1, \cdots, N-1\}$.  The
state space of our process is given by $X_N=\{0,1\}^{\T_N}$.  Let
$\eta$ denote a configuration in $X_N$, $x$ a site in $\T_N$,
$\eta(x)=1$ if there is a particle at site $x$, otherwise $\eta(x)=0$.

We consider in the set $\T_N$ the superposition of the symmetric
simple exclusion process (Kawasaki) with a spin-flip dynamics
(Glauber).  This model was introduced by De Masi, Ferrari and Lebowitz
in \cite{dfl} to derive a reaction-diffusion equation from a
microscopic dynamics.  More precisely, the stochastic dynamics is a
Markov process on $X_N$ whose generator $\mathcal{L}_N$ acts on
functions $f:X_N\to\R$ as
\begin{align*}
\mathcal{L}_Nf \;=\; \frac{N^2}{2} \mathcal{L}_Kf + \mathcal{L}_Gf\;,
\end{align*}
where $\mathcal{L}_K$ is the generator of a symmetric simple exclusion
process (Kawasaki dynamics),
\begin{align*}
(\mathcal{L}_Kf)(\eta) \;=\; \sum_{x\in\T_N} [f(\eta^{x,x+1}) - f(\eta)]\;,
\end{align*}
and where $\mathcal{L}_G$ is the generator of a spin flip dynamics
(Glauber dynamics),
\begin{align*}
(\mathcal{L}_Gf)(\eta) \;=\; \sum_{x\in\T_N} c(x,\eta)[f(\eta^x) - f(\eta)]\;.
\end{align*}
In these formulas, $\eta^{x,x+1}$ (resp. $\eta^x$) represents the
configuration obtained from $\eta$ by exchanging (resp. flipping) the
occupation variables $\eta(x)$, $\eta(x+1)$ (resp. $\eta(x)$):
\begin{align*}
\eta^x(z) \;=\; \begin{cases}
     \eta(z)  & \text{if $z\neq x$}\;, \\
     1-\eta(z)  & \text{if $z=x$}\;, 
\end{cases}
\quad
\eta^{x,y}(z) \;=\; \begin{cases}
     \eta(y)  & \text{if $z=x$}\;, \\
     \eta(x)  & \text{if $z=y$}\;, \\
     \eta(z)    & \text{otherwise}\;.
\end{cases}
\end{align*}
Moreover, $c(x,\eta)=c(\eta(x-M),\cdots, \eta(x+M))$, for some $M\ge
1$ and some strictly positive cylinder function $c(\eta)$, that is, a
function which depends only on a finite number of variables $\eta(y)$.
Note that the exclusion dynamics has been speeded-up by a factor
$N^{2}$, and that the Markov process generated by $\mathcal{L}_{N}$ is
irreducible because $c(\eta)$ is a strictly positive function.

\subsection{Hydrodynamic limit}
\label{hdl}
We briefly discuss in this subsection the limiting behavior of the
empirical measure.

Denote by $\T$ the one-dimensional continuous torus $\T=\R/\Z=[0,1)$.
Let $\mathcal{M}_+=\mathcal{M}_+(\T)$ be the space of nonnegative
measures on $\T$, whose total mass bounded by $1$, endowed with the
weak topology.  For a measure $\pi$ in $\mathcal{M}_+$ and a
continuous function $G:\T\to\R$, denote by $\lan \pi, G \ran$ the
integral of $G$ with respect to $\pi$:
\begin{equation*}
\lan \pi, G \ran \;=\; \int_\T G(u) \pi(du) \;.
\end{equation*}
The space $\M_+$ is metrizable. Indeed, if $f_{2k}(u)= \cos(\pi k u)$
and $f_{2k+1}(u)= \sin(\pi ku)$, $k\in\N_{0}$, one can define the
distance $d$ on $\M_{+}$ as
\begin{equation*}
d(\pi_{1},\pi_{2}) \;:=\; \sum_{k=0}^{\infty}\dfrac{1}{2^{k}}
|\lan\pi_{1},f_{k}\ran - \lan\pi_{2}, f_{k}\ran| \; .
\end{equation*}

Denote by $C^m(\T)$, $m$ in $\N_{0}\cup\{\infty\}$, the set of all
real functions on $\T$ which are $m$ times differentiable and whose
$m$-th derivative is continuous.  Given a function $G$ in $C^2(\T)$,
we shall denote by $\nabla G$ and $\De G$ the first and second
derivative of $G$, respectively.

Let $\{\eta_t^N:N\ge1\}$ be the continuous-time Markov process on
$X_{N}$ whose generator is given by $\mathcal{L}_{N}$.  Let $\pi^N:
X_N\to\M_+$ be the function which associates to a configuration $\eta$
the positive measure obtained by assigning mass $N^{-1}$ to each
particle of $\eta$,
\begin{align*}
\pi^N(\eta)\;=\;\frac{1}{N}\sum_{x\in \T_N}\eta(x)\de_{x/N}\;,
\end{align*}
where $\de_u$ stands for the Dirac measure which has a point mass at
$u\in\T$.  Denote by $\pi_{t}^{N}$ the empirical measure process
$\pi^{N}(\eta_{t}^{N})$.

Fix arbitrarily $T>0$.  For a topological space $X$ and an interval
$I=[0,T]$ or $[0,\infty)$, denote by $C(I,X)$ the set of all
continuous trajectories from $I$ to $X$ endowed with the uniform
topology.  Let $D(I,X)$ be the space of all right-continuous
trajectories from $I$ to $X$ with left-limits, endowed with the
Skorokhod topology.  For a probability measure $\nu$ in $X_N$, denote
by $\mathbb{P}_{\nu}^{N}$ the measure on $D([0,T], X_N)$ induced by
the process $\eta_{t}^{N}$ starting from $\nu$.

Let $\nu_\rho=\nu^N_{\rho}$, $0\le\rho\le 1$, be the Bernoulli product
measure with the density $\rho$.  Define the continuous functions
$B,D:[0,1]\to\R$ by
\begin{align*}
B(\rho)\;=\;\int [1-\eta(0)]\, c(\eta)\, d\nu_\rho\;, \quad
D(\rho)\;=\;\int \eta(0)\, c(\eta)\, d\nu_\rho \;.
\end{align*}
Since $B(1)=0$, $D(0)=0$ and $B,D$ are polynomials in $\rho$, 
\begin{equation}
\label{02}
B(\rho)\;=\; (1-\rho) \, \tilde B(\rho)\;, \quad
D(\rho)\;=\; \rho\, \tilde D(\rho)\;, 
\end{equation}
where $\tilde B(\rho)$, $\tilde D(\rho)$ are polynomials.

The next result was proved by De Masi, Ferrari and Lebowitz in
\cite{dfl} for the first time. We refer to \cite{dfl, jlv, kl} for its
proof.

\begin{theorem}
\label{hydrodynamics}
Fix $T>0$ and a measurable function $\ga:\T\to[0,1]$.  Let $\nu =
\nu_N$ be a sequence of probability measures on $X_N$ associated to
$\ga$, in the sense that
\begin{equation*}
\lim_{N\to\infty}
\nu_N \Big(|\lan\pi^{N},G\ran -\int_{\T} G(u)\ga(u)du|>\de\Big)\;=\;0\;,
\end{equation*}
for every $\de>0$ and every continuous function $G:\T\to\R$.  Then,
for every $t\ge 0$, every $\de>0$ and every continuous function
$G:\T\to\R$, we have
\begin{equation*}
\lim_{N\to\infty}
\mathbb{P}^{N}_{\nu}
\Big(|\lan\pi_{t}^{N},G\ran -\int_{\T} G(u)\rho(t,u)du|>\de\Big)\;=\;0\;,
\end{equation*}
where $\rho:[0,\infty)\times\T\to[0,1]$ is the unique weak solution of
the Cauchy problem
\begin{equation}
\label{rdeq}
\begin{cases}
     \partial_t\rho \;=\; (1/2) \De \rho + F(\rho) \ \text{ on }\ \T\;,\\
     \rho(0,\cdot)\;=\;\ga(\cdot)\;,
\end{cases}
\end{equation}
where $F(\rho)=B(\rho)-D(\rho)$.
\end{theorem}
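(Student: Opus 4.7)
The plan is to follow the classical entropy-method approach to hydrodynamic limits in three stages: establish tightness of the laws $\mathbb{Q}^N$ of $\{\pi^N_\cdot\}$ on $D([0,T],\M_+)$, identify every subsequential limit as concentrated on weak solutions of \eqref{rdeq}, and invoke uniqueness of the weak solution to conclude. The hypothesis on the initial distribution already fixes any limit measure to start from $\ga(u)\,du$.

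For tightness, I work with the Dynkin martingales. For $G\in C^2(\T)$ the process
\begin{equation*}
M^{N,G}_t \;=\; \lan \pi^N_t, G\ran \,-\, \lan \pi^N_0, G\ran \,-\, \int_0^t \mc L_N \lan \pi^N_s, G\ran\, ds
\end{equation*}
is a martingale, and a direct computation using summation by parts on the torus gives
\begin{equation*}
\mc L_N \lan \pi^N_s, G\ran \;=\; \frac{1}{2N}\sum_{x\in\T_N} (\De_N G)(x/N)\, \eta_s(x) \,+\, \frac{1}{N}\sum_{x\in\T_N} G(x/N)\, [1-2\eta_s(x)]\, c(x,\eta_s)\;,
\end{equation*}
where $(\De_N G)(x/N) := N^2\{G((x+1)/N)+G((x-1)/N)-2G(x/N)\}$ converges uniformly to $\De G(x/N)$. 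Both summands are uniformly bounded in $(\eta,N)$ and the quadratic variation of $M^{N,G}$ is of order $1/N$. Since $\M_+$ is compact, tightness at fixed times is immediate, and Aldous' criterion then yields tightness of $\{\mathbb{Q}^N\}$ in $D([0,T],\M_+)$.

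The central step is to identify any limit point $\mathbb{Q}^*$. Passing to the limit in the martingale relation requires replacing the local function $[1-2\eta(x)]c(x,\eta)$ by a function of the empirical density. This is the standard replacement lemma: by the one-block and two-block estimates, which hold thanks to the diffusive speed-up $N^2$ of the Kawasaki part, for every $G\in C(\T)$ and every $\de>0$,
\begin{equation*}
\lim_{\e\to 0}\limsup_{N\to\infty}\, \mathbb{P}_\nu^N\Big[\,\Big|\int_0^t \frac{1}{N}\sum_{x\in\T_N} G(x/N)\,\big\{[1-2\eta_s(x)]\,c(x,\eta_s) \,-\, F(\eta_s^{\e N}(x))\big\}\,ds\,\Big|>\de\,\Big] \;=\; 0\;,
\end{equation*}
where $\eta^\ell(x)=(2\ell+1)^{-1}\sum_{|y-x|\le\ell}\eta(y)$ and we used $\int[1-2\eta(0)]c(\eta)\,d\nu_\alpha = B(\alpha)-D(\alpha)=F(\alpha)$ together with \eqref{02}. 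Combined with the convergence of the discrete Laplacian term and the continuity of $F$, this shows that $\mathbb{Q}^*$ is concentrated on absolutely continuous trajectories $\pi_t(du)=\rho(t,u)\,du$ with $0\le\rho\le 1$, which satisfy, for every $G\in C^2(\T)$ and every $t\in[0,T]$,
\begin{equation*}
\lan \pi_t, G\ran \,-\, \lan \pi_0, G\ran \;=\; \int_0^t \Big\{\tfrac{1}{2}\lan \rho_s, \De G\ran \,+\, \lan F(\rho_s), G\ran\Big\}\,ds\;,
\end{equation*}
i.e.\ the weak form of \eqref{rdeq}.

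Finally, since $B$ and $D$ are polynomials, $F$ is Lipschitz on $[0,1]$, and the weak solution of \eqref{rdeq} is unique by a Gronwall argument in $L^2(\T)$ applied to the difference of two weak solutions starting from $\ga$. Consequently $\mathbb{Q}^*$ is a Dirac mass at this unique weak solution $\rho$, which gives the stated convergence in probability at each fixed $t$. I expect the main technical obstacle to lie in the replacement lemma: its proof requires the super-exponential entropy estimate and a careful use of the spectral gap of $\mc L_K$ on mesoscopic boxes, while the convergence of the discrete Laplacian term and the uniqueness of the PDE are routine.
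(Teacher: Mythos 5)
The paper does not prove Theorem~\ref{hydrodynamics} itself; it attributes the result to De Masi, Ferrari and Lebowitz and refers to \cite{dfl, jlv, kl} for the proof. Your outline --- Dynkin martingales, Aldous tightness, the replacement lemma via one-block and two-block estimates to substitute $F(\eta^{\e N}(x))$ for $[1-2\eta(x)]c(x,\eta)$, and identification of the limit with the unique weak solution --- is exactly the standard entropy-method argument used in those references, and your explicit computation of $\mc L_N\lan\pi^N,G\ran$ and of the martingale's quadratic variation is correct.

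One small discrepancy concerns the uniqueness step. You invoke a Gronwall argument in $L^2(\T)$ applied directly to the difference of two weak solutions. This requires justification: a weak solution in the sense of Definition~\ref{wsol} is only tested against $C^{1,2}$ test functions, so one cannot test directly against $\rho^1_t-\rho^2_t$ without first establishing additional regularity (or mollifying). The paper handles this more carefully in its appendix (Proposition~\ref{exun}): it proves that weak and mild solutions are equivalent and then obtains uniqueness from the Lipschitz continuity of $F$ via Picard iteration on the mild formulation $\rho_t = P_t\gamma + \int_0^t P_{t-s}F(\rho_s)\,ds$. This is the cleaner route and avoids the regularity issue; you should either adopt it or add the mollification step needed to make the direct Gronwall rigorous. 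With that adjustment, your proposal matches the proof the paper points to.
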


The definition, existence and uniqueness of weak solutions of the
Cauchy problem \eqref{rdeq} are discussed in Section 6.

\subsection{Hydrostatic limit}
\label{hs}

We examine in this subsection the asymptotic behavior of the empirical
measure under the stationary state.  Fix $N\ge 1$ large enough. Since the Markov
Process $\eta^{N}_{t}$ is irreducible and the cardinality of the state
space $X_N$ is finite, there exists a unique invariant probability
measure for the process $\eta_{t}^{N}$, denoted by $\mu_N$.  Let
$\mathcal{P}_N$ be the probability measure on $\M_+$ defined by
$\mathcal{P}_N = \mu_N \circ (\pi^N)^{-1}$.

For each $p\ge1$, let $L^p(\T)$ be the space of all real $p$-th integrable functions
$G:\T\to\R$ with respect to the Lebesgue measure: $\int_\T |G(u)|^p du
<\infty$. The corresponding norm is denoted by $\|\cdot\|_{p}$:
\begin{equation*}
\|G\|_{p}^{p}\;:=\;\int_\T |G(u)|^p du\;.
\end{equation*}
In particular, $L^2(\T)$ is a Hilbert space equipped with the inner
product
\begin{align*}
\lan G, H \ran \;=\; \int_\T G(u)H(u) du\;.
\end{align*}
For a function $G$ in $L^{2}(\T)$, we also denote by $\lan G \ran$
the integral of $G$ with respect to the Lebesgue measure:
$\lan G \ran :=\int_{\T} G(u) du$.

Let $\E$ be the set of all classical solutions of the semilinear
elliptic equation:
\begin{equation}\label{seeq}
(1/2)\De\rho + F(\rho) \;=\; 0 \ \text{ on }\ \T\;.
\end{equation}
Classical solution means a function $\rho:\T\to[0,1]$ in $C^{2}(\T)$
which satisfies the equation \eqref{seeq} for any $u\in\T$.
We sometimes identify $\E$ with the set of all absolutely continuous
measures whose density are a classical solution of \eqref{seeq}:
\begin{equation*}
\{\pi\in\M_+:\pi(du)=\rho(u)du,\  \rho 
\text{ is a classical solution of the equation \eqref{seeq}}\}.
\end{equation*}

\begin{theorem}
\label{hsl}
The measure $\mathcal{P}_N$ asymptotically concentrates on the set
$\E$. Namely, for any $\de>0$, we have
\begin{align*}
\lim_{N\to\infty}\mathcal{P}_N(\pi\in\M_+ : 
\inf_{\bar{\pi}\in\E} d(\pi , \bar{\pi})\ge\de) \;=\; 0\;.
\end{align*}
\end{theorem}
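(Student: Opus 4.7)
The plan is to combine the hydrodynamic limit (Theorem~\ref{hydrodynamics}) with the long-time asymptotics of the reaction--diffusion equation~\eqref{rdeq} from \cite{cm}, exploiting the invariance of $\mu_{N}$, along the lines of \cite{flm, mo}. Since $\M_{+}$ is compact, the family $\{\mathcal{P}_{N}\}$ is automatically tight, and it suffices to prove that every subsequential weak limit $\mathcal{P}^{*}$ satisfies $\mathcal{P}^{*}(\{\pi : d(\pi,\E)\ge\de\}) = 0$ for every $\de>0$.

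The main step is to lift the problem to path space. Fix $T>0$ and let $\mathbb{Q}_{N}$ denote the law of $(\pi_{t}^{N})_{t\in[0,T]}$ under $\mathbb{P}_{\mu_{N}}^{N}$. Standard martingale estimates for $\mathcal{L}_{N}$ give tightness of $\{\mathbb{Q}_{N}\}$ on $D([0,T],\M_{+})$, with the limit concentrated on $C([0,T],\M_{+})$. Passing to a subsequence along which simultaneously $\mathbb{Q}_{N}\to\mathbb{Q}$ and $\mathcal{P}_{N}\to\mathcal{P}^{*}$, the key claim is that $\mathbb{Q}$-almost surely $\pi_{t}(du)=\rho(t,u)\,du$, where $\rho$ is the unique weak solution of \eqref{rdeq} with a random initial profile $\gamma$ whose distribution is the initial marginal of $\mathbb{Q}$; this is obtained by disintegrating $\mathbb{Q}$ with respect to $\pi_{0}$ and applying Theorem~\ref{hydrodynamics} conditionally on each realisation of $\gamma$. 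Stationarity of $\mu_{N}$ then forces every one-dimensional marginal of $\mathbb{Q}$ to coincide with $\mathcal{P}^{*}$.

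To conclude, I would invoke the result of \cite{cm}: for any measurable $\gamma:\T\to[0,1]$, the solution $\rho(t,\cdot)$ of \eqref{rdeq} converges, as $t\to\infty$, to some classical solution of \eqref{seeq}. Since densities are uniformly bounded, this implies $d(\pi_{t},\E)\to 0$ $\mathbb{Q}$-a.s., and dominated convergence (the metric $d$ is bounded on $\M_{+}$) yields
\begin{equation*}
\int d(\pi,\E)\,d\mathcal{P}^{*}(\pi) \;=\; \mathbb{E}_{\mathbb{Q}}\bigl[d(\pi_{t},\E)\bigr] \;\xrightarrow[t\to\infty]{}\; 0\;,
\end{equation*}
the left-hand side being independent of $t$, whence $\mathcal{P}^{*}(\{\pi:d(\pi,\E)\ge\de\})=0$ for every $\de>0$. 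The hard part will be the conditional hydrodynamic step in the second paragraph: $\mu_{N}$ is not associated with a deterministic profile, so Theorem~\ref{hydrodynamics} cannot be applied directly but only through a regular disintegration of $\mathbb{Q}$ given $\pi_{0}$, combined with sufficient continuity in $\gamma$ of the solution map of \eqref{rdeq}. A priori control on the initial fluctuations under $\mu_{N}$---for instance an $O(N)$ relative entropy bound of $\mu_{N}$ against a Bernoulli product measure---will be the natural ingredient ensuring that the replacement estimates underlying Theorem~\ref{hydrodynamics} survive under the stationary state.
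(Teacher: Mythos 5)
Your overall structure matches the paper's: lift $\mu_N$ to the path space, use tightness, characterize subsequential limits as (random) solutions of~\eqref{rdeq}, invoke the long-time convergence result of \cite{cm}, and close the loop via stationarity. The terminal step via $\int d(\pi,\E)\,d\mathcal{P}^*(\pi)=\mathbb{E}_{\mathbb{Q}}[d(\pi_t,\E)]\to 0$ is a clean alternative to the paper's Borel--Cantelli-type argument with $\bigcap_j\bigcup_{k\ge j}\{\pi_k\in\E_\de^c\}$, and it is essentially equivalent. A small bookkeeping point: you fix $T>0$ at the outset but then let $t\to\infty$; work directly on $D([0,\infty),\M_+)$ as the paper does.

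The genuine issue is that the step you flag as ``the hard part'' is not hard at all once phrased correctly, and the route you propose for it is both a detour and not quite well-posed as stated. You want to disintegrate $\mathbb{Q}$ over $\pi_0$ and then apply Theorem~\ref{hydrodynamics} conditionally on each realised profile $\gamma$. But Theorem~\ref{hydrodynamics} is a prelimit statement about a sequence $\nu_N$ associated to a fixed $\gamma$; disintegrating the limit object $\mathbb{Q}$ does not produce such a sequence, and trying to disintegrate $\mathbb{Q}_N$ before the limit requires uniform-in-$\gamma$ estimates and continuity of the solution map that you then have to supply. None of this is needed. The standard martingale characterization underlying the hydrodynamic limit (Lemma~A.1.1 of \cite{kl}) never required the initial data to be deterministic: tightness plus the vanishing of the relevant martingales shows directly that any subsequential limit $\mathbf{Q}^*$ of the path-space laws is supported on trajectories $\pi(t,du)=\rho(t,u)\,du$ with $\rho$ a weak solution of~\eqref{rdeq} for \emph{some} initial profile $\rho_0$ (possibly random). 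This is exactly Lemma~\ref{conc} in the paper, and it replaces your entire second paragraph. Once you have $\mathbf{Q}^*(\mathcal{A})=1$ with $\mathcal{A}\subset C([0,\infty),\M_{+,1})$, the projection $\pi\mapsto\pi_t$ is $\mathbf{Q}^*$-a.s.\ continuous, which is what makes the identification of each one-dimensional marginal with $\mathcal{P}^*$ (and your dominated-convergence finish) rigorous; this concentration on continuous paths is also why the paper can pass from the Skorokhod closure of $\{\pi_T\in\E_\de^c\}$ back to the set itself. The proposed $O(N)$ relative-entropy control of $\mu_N$ against a Bernoulli product measure is a red herring: it is not used and not needed.
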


If the set $\mathcal{E}$ is a singleton, it follows from Theorem
\ref{hsl} that the sequence $\{\mathcal{P}_{N}:N\ge1\}$ converges:

\begin{corollary}
  Assume that there exists a unique classical solution
  $\overline{\rho}:\T\to[0,1]$ of the semilinear elliptic equation
  \eqref{seeq}.  Then $\mathcal{P}_{N}$ converges to the Dirac measure
  concentrated on $\overline{\rho}(u)du$ as $N\to\infty$.
\end{corollary}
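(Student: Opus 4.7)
The plan is to derive this as a direct consequence of Theorem~\ref{hsl} together with the standard fact that convergence in probability to a constant implies weak convergence to the corresponding Dirac measure. First I would fix the unique classical solution $\overline{\rho}$ and identify it with the absolutely continuous measure $\overline{\pi}(du) = \overline{\rho}(u)\,du \in \M_+$. Under the uniqueness hypothesis, the set $\E$ reduces to the singleton $\{\overline{\pi}\}$, so $\inf_{\bar\pi \in \E} d(\pi, \bar\pi) = d(\pi, \overline{\pi})$, and Theorem~\ref{hsl} yields
\begin{equation*}
\lim_{N\to\infty} \mathcal{P}_N\bigl(\pi \in \M_+ : d(\pi, \overline{\pi}) \ge \de\bigr) \;=\; 0
\end{equation*}
for every $\de > 0$.

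Next I would upgrade this concentration statement to weak convergence of $\mathcal{P}_N$ to $\de_{\overline{\pi}}$. Given a bounded continuous function $\Phi : \M_+ \to \R$ and $\e > 0$, continuity of $\Phi$ at the point $\overline{\pi}$ (with respect to the metric $d$) provides $\de > 0$ such that $|\Phi(\pi) - \Phi(\overline{\pi})| < \e$ whenever $d(\pi, \overline{\pi}) < \de$. Splitting the expectation according to whether $d(\pi, \overline{\pi}) < \de$ or not, one obtains
\begin{equation*}
\Bigl| \int \Phi \, d\mathcal{P}_N - \Phi(\overline{\pi}) \Bigr| \;\le\; \e \;+\; 2\|\Phi\|_\infty \, \mathcal{P}_N\bigl(d(\pi, \overline{\pi}) \ge \de\bigr) \;.
\end{equation*}
Letting $N \to \infty$ and then $\e \to 0$ gives $\int \Phi \, d\mathcal{P}_N \to \Phi(\overline{\pi})$, which is exactly weak convergence $\mathcal{P}_N \Rightarrow \de_{\overline{\pi}}$ on $\M_+$.

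There is no real obstacle here: the corollary is purely a soft consequence of Theorem~\ref{hsl}. The only point requiring even mild care is that the metric $d$ defined earlier indeed metrizes the weak topology on $\M_+$, so that continuity of $\Phi$ with respect to the weak topology coincides with continuity with respect to $d$; this is built into the definition of $d$ via the trigonometric basis $\{f_k\}$ and Stone--Weierstrass.
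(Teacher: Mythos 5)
Your proposal is correct and matches the paper's implicit argument: the paper presents the corollary as an immediate consequence of Theorem~\ref{hsl}, and you simply spell out the standard step that convergence in probability to a fixed point of a metric space implies weak convergence of the laws to the Dirac mass at that point. No issues.
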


\begin{remark}
  In \cite{dpv, dppv}, De Masi et al. examined the dynamics introduced
  above in the case of the double well potential $F(\rho) = -V'(\rho)=
  a (2\rho-1) -b (2\rho-1)^3$, $a$, $b>0$, which is symmetric around
  the density $1/2$.  They proved that, starting from a product
  measure with mean $1/2$, the unstable equilibrium of the ODE $\dot
  x(t) = - V'(x(t))$, the empirical density remains in a neighborhood
  of $1/2$ in a time scale of order $\log N$.  Bodineau and Lagouge in
  Subsection of \cite{bl2} conjectured that Theorem \ref{hsl} remains true if we
  replace $\mathcal{E}$ by the set of all stable equilibrium solutions
  of the equation \eqref{seeq}.  This conjecture is proved in
  \cite{flt} and follows from the large deviation principle for the
  sequence $\{\mathcal{P}_{N}: N\ge1\}$.
\end{remark}

\subsection{Dynamical large deviations}
\label{ldp}

Denote by $\mathcal{M}_{+,1}$ the closed subset of
$\mathcal{M}_+$ of all absolutely continuous measures with density
bounded by $1$:
\begin{equation*}
\mathcal{M}_{+,1}\;=\;\{\pi\in\mathcal{M}_+(\T):
\pi(du)=\rho(u)du,\ 0\le\rho(u)\le1\ a.e.\ u\in\T\}\;.
\end{equation*}

Fix $T>0$, and denote by $C^{m,n}([0,T]\times\T)$, $m,n$ in
$\N_{0}\cup\{\infty\}$, the set of all real functions defined on
$[0,T]\times\T$ which are $m$ times differentiable in the first
variable and $n$ times on the second one, and whose derivatives are
continuous.  Let $Q_{\eta}=Q^{N}_{\eta}$, $\eta\in X_{N}$, be the
probability measure on $D([0,T],\M_{+})$ induced by the measure-valued
process $\pi^{N}_{t}$ starting from $\pi^{N}(\eta)$.

Fix a measurable function $\ga:\T\to[0,1]$.  For each path
$\pi(t,du)=\rho(t,u)du$ in $D([0,T],\mathcal{M}_{+,1})$, define the
energy $\mathcal{Q}: D([0,T],\mathcal{M}_{+,1})\to[0,\infty]$ as
\begin{equation}
\label{i03}
\mathcal{Q}(\pi)\;=\;\sup_{G\in C^{0,1}([0,T]\times\T)}
\Big \{2\int_0^Tdt\ \lan\rho_t,\nabla G_t\ran
-\int_0^Tdt\int_\T du\ G^2(t,u) \Big\}\;.
\end{equation}
It is known that the energy $\mathcal{Q}(\pi)$ is finite if and only
if $\rho$ has a generalized derivative and this generalized derivative
is square integrable on $[0,T]\times\T$:
\begin{equation*}
\int_{0}^{T} dt \ \int_{\T} du \ |\nabla\rho(t,u)|^{2} <\infty\;.
\end{equation*}
Moreover, it is easy to see that the energy $\mathcal{Q}$
is convex and lower semicontinuous.

For each function $G$ in $C^{1,2}([0,T]\times\T)$, define the functional
$\bar{J}_G:D([0,T],\mathcal{M}_{+,1})\to\R$ by
\begin{align*}
\bar{J}_G(\pi) & \; =\;\lan\pi_T,G_T\ran -\lan\ga,G_0\ran-
\int_0^Tdt\ \lan\pi_t, \partial_tG_t+\frac{1}{2}\De G_t\ran \\
&-\frac{1}{2}\int_0^Tdt\ \lan \chi(\rho_t), (\nabla G_t)^2\ran 
-\int_0^Tdt\ \big\{\lan B(\rho_t), e^{G_t}-1\ran + \lan D(\rho_t)\;, 
e^{-G_t}-1 \ran \big\},
\end{align*}
where $\chi(r)=r(1-r)$ is the mobility.  Let
$J_G:D([0,T],\mathcal{M}_+)\to[0,\infty]$ be the functional defined by
\begin{equation*}
J_G(\pi) \;=\; \begin{cases}
     \bar{J}_G(\pi)  & \text{if $\pi\in D([0,T],\mathcal{M}_{+,1})$}\;, \\
     \infty  & \text{otherwise}\;.
\end{cases}
\end{equation*}
We define the large deviation rate function
$I_T(\cdot|\ga):D([0,T],\mathcal{M}_+)\to[0,\infty]$ as
\begin{equation}
\label{i04}
I_T(\pi|\ga) \;=\; \begin{cases}
     \sup{J_G(\pi)} &
     \text{if $\mathcal{Q}(\pi)<\infty$}\;, \\
     \infty  & \text{otherwise}\;,
\end{cases}
\end{equation}
where the supremum is taken over all functions $G$ in $C^{1,2}([0,T]\times\T)$.

We review here an explicit formula for the functional $I_T$ at
smooth trajectories obtained in Lemma 2.1 of \cite{jlv}.
Let $\rho$ be a function in $C^{2,3}([0,T]\times\T)$  with
$c\le\rho \le 1-c$, for some $0< c <1/2$,
and set $\pi(t,du)=\rho(t,u)du$.
Then there exists a unique solution $H\in C^{1,2}([0,T]\times\T)$
of the partial differential equation
\begin{align*}
\partial_t\rho \;=\; (1/2)\De\rho -\nabla(\chi(\rho)\nabla H)
+ B(\rho)e^H -D(\rho)e^{-H} \;,
\end{align*}
with some initial profile $\ga$.
In the case, $I_T(\pi|\ga)$ can be expressed as
\begin{align*}
I_T(\pi|\ga) \;=\; & \frac{1}{2}\int_0^Tdt\  \lan\chi(\rho_t), (\nabla H_t)^2\ran \\
& + \int_0^Tdt\  \lan B(\rho_t), f(H_t) \ran + \int_0^Tdt\  \lan D(\rho_t), f(-H_t) \ran \;,
\end{align*}
where $f(a)=1-e^a+ae^a$.

The following theorem is one of main results of this paper.

\begin{theorem}
\label{mt1}
Assume that the functions $B$ and $D$ are concave on $[0,1]$.  Fix
$T>0$ and a measurable function $\ga:\T\to[0,1]$.  Assume that a
sequence $\eta^{N}$ of initial configurations in $X_{N}$ is associated
to $\ga$, in the sense that
\begin{equation*}
\lim_{N\to\infty}\lan\pi^{N}(\eta^{N}), G\ran \;=\; \int_{\T}G(u)\ga(u)du
\end{equation*}
for every continuous function $G:\T\to\R$.  Then, the measure
$Q_{\eta^{N}}$ on $D([0,T],\mathcal{M}_{+})$ satisfies a large
deviation principle with the rate function $I_{T}(\cdot|\ga)$. That
is, for each closed subset $\mathcal{C}\subset
D([0,T],\mathcal{M}_{+})$,
\begin{equation*}
\varlimsup_{N\to\infty}\frac{1}{N}\log{Q_{\eta^{N}}(\mathcal{C})} \;\le\; 
-\inf_{\pi\in\mathcal{C}}I_{T}(\pi|\ga)\;,
\end{equation*}
and for each open subset $\mathcal{O}\subset
D([0,T],\mathcal{M}_{+})$, 
\begin{equation*}
\varliminf_{N\to\infty}\frac{1}{N}\log{Q_{\eta^{N}}(\mathcal{O})} \;\ge\; 
-\inf_{\pi\in\mathcal{O}}I_{T}(\pi|\ga)\;.
\end{equation*}
Moreover, the rate function $I_{T}(\cdot|\ga)$ is lower semicontinuous
and has compact level sets.
\end{theorem}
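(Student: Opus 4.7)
The plan is to adopt the standard scheme for large deviations of interacting particle systems and to decompose the argument into four pieces: exponential tightness of $\{Q_{\eta^{N}}\}_{N\ge 1}$ on $D([0,T],\mc M_+)$, the upper bound on closed sets, the lower bound on open sets, and the regularity of $I_{T}(\cdot|\ga)$. Exponential tightness is inherited from the compactness of $\mc M_{+,1}$ combined with Aldous-type estimates on the modulus of continuity of $\lan\pi^{N}_{t},G\ran$ for a countable dense family of smooth $G$; those estimates follow from exponential martingale bounds generated by $\mc L_{N}$, using that $c(x,\eta)$ is bounded and that the Kawasaki part is speeded up by $N^{2}$.

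For the upper bound, for each $G\in C^{1,2}([0,T]\times\T)$ Dynkin's formula produces the mean-one positive martingale
\[
M^{N,G}_{T}\;=\;\exp\!\big\{N\,\bar J_{G}(\pi^{N}_{[0,T]})+N\,R_{N}(G)\big\},
\]
where $R_{N}(G)\to 0$ uniformly on $D([0,T],\mc M_{+,1})$ after a super-exponential replacement lemma that trades the local averages of $c(x,\eta)$ for $B(\rho)$ and $D(\rho)$, exploiting the $N^{2}$-speedup of the exclusion component. Chebyshev then gives, for any compact $K\subset D([0,T],\mc M_{+})$,
\[
\varlimsup_{N\to\infty}\frac{1}{N}\log Q_{\eta^{N}}(K)\;\le\;-\inf_{\pi\in K}J_{G}(\pi).
\]
Exponential tightness upgrades compacts to closed sets, and a minimax lemma allows one to push the supremum over $G$ inside the infimum, producing the upper bound with rate $I_{T}(\cdot|\ga)$.

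For the lower bound, I would first treat a smooth reference trajectory $\pi(t,du)=\rho(t,u)\,du$ with $\rho\in C^{2,3}([0,T]\times\T)$ and $c\le\rho\le 1-c$. The remark preceding the theorem produces a unique $H\in C^{1,2}$ solving the perturbed PDE displayed there. Tilting $\mathbb P^{N}_{\nu_{N}}$ by $M^{N,H}_{T}$ yields a perturbed dynamics whose empirical measure converges in probability to $\rho$, by an argument parallel to Theorem \ref{hydrodynamics} applied to the tilted generator, and whose relative entropy with respect to $Q_{\eta^{N}}$ equals $N\,I_{T}(\pi|\ga)+o(N)$; the usual entropy inequality then delivers the lower bound at $\pi$. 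The passage from smooth trajectories to arbitrary $\pi$ with $I_{T}(\pi|\ga)<\infty$ is exactly Theorem \ref{dense}, whose proof uses convolutions with mollifiers together with the concavity of $B$ and $D$ (via Jensen-type inequalities) to prevent the exponential reaction terms in $\bar J_{G}$ from inflating along the approximating sequence.

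For the regularity of $I_{T}$, Proposition \ref{energy} provides the key bound $\mc Q(\pi)\le C_{0}(I_{T}(\pi|\ga)+1)$. If $\pi_{n}\to\pi$ weakly in $D([0,T],\mc M_{+})$ with $\sup_{n}I_{T}(\pi_{n}|\ga)\le a<\infty$, then the densities $\rho_{n}$ have uniformly $L^{2}$-bounded weak spatial gradients and $0\le\rho_{n}\le 1$; a Rellich-type argument then upgrades weak convergence of $\pi_{n}$ in $\mc M_{+}$ to convergence in $L^{1}([0,T]\times\T)$. Since each $J_{G}$ is continuous in this topology,
\[
J_{G}(\pi)\;=\;\lim_{n}J_{G}(\pi_{n})\;\le\;\liminf_{n}I_{T}(\pi_{n}|\ga),
\]
and taking the supremum in $G$ proves lower semicontinuity. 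Compact level sets follow analogously from the same energy control combined with exponential tightness. The principal obstacle is Theorem \ref{dense}: producing smooth approximations of a finite-rate trajectory without inflating the exponential reaction terms is what forces concavity of $B$ and $D$; all remaining steps are analogues of arguments familiar from the conservative case, with the essential simplification that no $H_{-1}$-bound on $\partial_{t}\pi$ is available nor needed.
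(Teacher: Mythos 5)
Your proposal is correct and follows the same overall architecture as the paper: the upper bound and the lower bound over perturbed-PDE trajectories in $\Pi$ are imported from \cite{jlv} (the paper simply cites them, whereas you sketch the Dynkin-formula/super-exponential-replacement/tilting derivation), the lower bound is then extended to arbitrary open sets via the $I_T$-density of $\Pi$ from Theorem \ref{dense}, and the regularity is established via Proposition \ref{energy} plus the weak-to-$L^1$ upgrade exactly as in Theorem \ref{lsc}. One small discrepancy: for compactness of level sets the paper does not invoke exponential tightness of $Q_{\eta^N}$ but rather cites relative compactness of the level sets $E_q$ in $D([0,T],\mathcal M_+)$ directly from \cite{jlv} and adds closedness by the same $L^1$-convergence argument used for lower semicontinuity; your route would also work, but it is a slightly different (probabilistic rather than purely analytic) justification of the same fact.
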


\begin{remark}
  Jona-Lasinio, Landim and Vares \cite{jlv} proved the dynamical large
  deviations principle stated above, but the lower bound was obtained
  only for smooth trajectories. Bodineau and Lagouge \cite{bl} proved
  the lower bound for one-dimensional reaction-diffusion models in
  contact with reservoirs in the case where $B$ and $D$ are concave,
  monotone functions.
\end{remark}

\begin{remark}
  Proposition \ref{energy} asserts that there exists a finite constant
  $C_0$ such that if $\pi$ is a trajectory with finite energy,
  $Q(\pi)<\infty$, then $Q(\pi) \le C_0 (I_T(\pi|\gamma) +1)$. In the
  case where $B$ and $D$ are concave functions, we can use Theorem
  \ref{dense}, which asserts that the smooth trajectories are
  $I_T(|\gamma)$-dense, to prove the same bound without the assumption
  that the trajectory $\pi$ has finite energy. In particular, in this
  case we can define the rate function $I_T(\ |\gamma)$ simply as
  \begin{equation*}
    I_T(\pi|\ga) \;=\; \sup_G J_G(\pi)\;.
  \end{equation*}
\end{remark}

\begin{remark}
  In the proof that the rate function $I_{T}(\cdot|\ga)$ is lower
  semicontinuous and has compact level sets we do not use a bound on
  the $H_{-1}$ norm of $\partial_t \rho$ in terms of its rate function
  $I_T(\pi|\ga)$. Actually, as mentioned in the introduction, such
  a bound does not hold for reaction-diffusion models. Therefore,
  the arguments presented here permit to simplify the proof of the
  regularity of the rate function in other models, such as the weakly
  asymmetric simple exclusion process \cite{blm2, flm}.
\end{remark}

\section{Proof of Theorem \ref{hsl}}

We prove in this section Theorem \ref{hsl}.  Our approach is a
generalization of the one developed in \cite{flm, mo}, but it does not
require the existence of a global attractor for the underlying
dynamical system.  The method can be applied to any dynamics which
fulfills two conditions: the macroscopic evolution of the empirical
measure is described by a hydrodynamic equation, and for any initial
condition the solution of this equation converges to a stationary
profile as time goes to infinity. For instance, the boundary driven
reaction-diffusion models examined in \cite{bl}.

Recall from Subsection \ref{hs} the definition of the measure $\mu_N$
on $X_N$, the map $\pi^N$ from $X_N$ to $\M_+$ and the measure
$\mathcal{P}_N= \mu_N \circ (\pi^N)^{-1}$ on $\mathcal{M}_+$.  Denote
by $\mathbf{Q}^N$ the probability measure on the Skorokhod space
$D([0,\infty), \M_+)$ induced by the measure-valued process
$\pi^N_{t}$ under the initial distribution $\mathcal{P}_{N}$. Since
the measure $\mu_N$ is stationary under the dynamics, $\mathcal{P}_N
(\mathcal{B}) = \mathbf{Q}^N (\pi:\pi_T\in\mathcal{B})$, for each
$T>0$ and Borel set $\mathcal{B}\subset\mathcal{M}_+$.

\begin{lemma}
\label{tight}
The sequence $\{\mathbf{Q}^N:N\ge1\}$ is tight and all its limit
points $\mathbf{Q}^*$ are concentrated on absolutely continuous
paths $\pi(t,du) = \rho(t,u)du$ whose density $\rho$ is nonnegative
and bounded above by $1$ :
\begin{gather*}
\mathbf{Q}^*\{ \pi:\pi(t,du) = \rho(t,u)du\ ,\ \text{for}\  t\in[0,\infty)\}
\;=\; 1\;, \\
\mathbf{Q}^*\{ \pi:0 \le \rho(t,u)\le 1\ ,\ 
\text{for}\  (t,u)\in[0,\infty)\times\T\} \;=\; 1\;.
\end{gather*}
\end{lemma}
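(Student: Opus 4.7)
My plan is to establish tightness by a standard Aldous--Rebolledo argument on the Dynkin martingale, and then to read off the density bound directly from the microscopic constraint $\eta(x)\in\{0,1\}$. Since $\M_+$ is compact in the weak topology (total mass bounded by $1$) and the metric $d$ defined in Subsection~\ref{hdl} uses the countable family of trigonometric functions $\{f_k\}$, tightness of $\{\mathbf{Q}^N\}$ in $D([0,\infty),\M_+)$ will follow from tightness of the real-valued processes $\langle\pi_\cdot^N,G\rangle$ in $D([0,\infty),\R)$ for each $G$ in a countable dense subset of $C(\T)$, which I would take to consist of smooth functions.

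For fixed $G\in C^\infty(\T)$ and $\Psi_G(\eta):=\langle\pi^N(\eta),G\rangle$, I expect a direct computation to give
\[
\tfrac{N^{2}}{2}\mathcal{L}_{K}\Psi_{G}(\eta) \;=\; \frac{1}{2N}\sum_{x\in\T_{N}}\eta(x)\,\Delta_{N}G(x/N),\qquad \mathcal{L}_{G}\Psi_{G}(\eta) \;=\; \frac{1}{N}\sum_{x\in\T_{N}}G(x/N)(1-2\eta(x))\,c(x,\eta),
\]
where $\Delta_N$ denotes the discrete Laplacian. Both expressions are bounded by a constant depending only on $\|G\|_{C^2}$ and $\|c\|_\infty$; in particular the bounded-variation part of the Dynkin decomposition $\langle\pi_t^N,G\rangle=\langle\pi_0^N,G\rangle+\int_0^t\mathcal{L}_N\Psi_G(\eta_s^N)\,ds+M_t^{N,G}$ is uniformly Lipschitz in $t$. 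A second carr\'e-du-champ computation of $\mathcal{L}_N\Psi_G^2-2\Psi_G\,\mathcal{L}_N\Psi_G$ should yield a predictable quadratic variation of $M^{N,G}$ of order $t/N$, so that $M^{N,G}\to 0$ uniformly on compacts in probability by Doob's inequality. Aldous' criterion then applies: for stopping times $\tau\le T$ and $\theta\downarrow 0$ the bounded-variation part contributes $O(\theta)$ deterministically while the martingale increment vanishes in probability.

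For any limit point $\mathbf{Q}^*$, positivity of each $\pi_t$ is automatic because $\pi_t^N\in\M_+$. For the upper bound on the density I would observe that for every nonnegative $G\in C(\T)$,
\[
\langle\pi_t^N,G\rangle \;=\; \frac{1}{N}\sum_{x\in\T_N}\eta_t^N(x)\,G(x/N) \;\le\; \frac{1}{N}\sum_{x\in\T_N}G(x/N) \;\xrightarrow[N\to\infty]{}\; \int_\T G(u)\,du.
\]
Passing to the weak limit at every $t$ for which the evaluation $\pi\mapsto\langle\pi_t,G\rangle$ is $\mathbf{Q}^*$-a.s.\ continuous (a cocountable set in $[0,\infty)$ for c\`adl\`ag paths) gives $\langle\pi_t,G\rangle\le\int_\T G\,du$ almost surely. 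Extending to every $t\ge 0$ by right-continuity, and taking $G$ through a countable dense family of nonnegative continuous functions, I then obtain $\pi_t(du)\le du$ for every $t\ge 0$, $\mathbf{Q}^*$-a.s., whence by Radon--Nikodym $\pi_t(du)=\rho(t,u)du$ with $0\le\rho(t,u)\le 1$.

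I do not expect a serious obstacle: the argument is routine hydrodynamic-limit machinery (cf.\ \cite{dfl, kl}). The only slightly delicate point is the promotion of the density bound from a cocountable set of times to every $t\ge 0$, which is handled through c\`adl\`ag regularity together with a countable dense family of test functions.
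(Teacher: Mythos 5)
Your proposal is correct and follows the standard tightness argument (Dynkin martingale decomposition, boundedness of the drift via the explicit generator computation, vanishing quadratic variation, Aldous criterion, then passing the particle-per-site constraint $\eta(x)\le 1$ to the limit). The paper itself does not spell out a proof but refers to Proposition 3.1 of \cite{lms}, which uses essentially this same machinery, so you are on the intended route.

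One remark worth noting: there is a slightly more economical way to handle the density bound. Because a single Kawasaki exchange or Glauber flip changes $\pi^N$ by at most $2/N$ in the metric $d$, every limit point $\mathbf{Q}^*$ is automatically concentrated on $C([0,\infty),\M_+)$. Once continuity is known, one only needs $\langle\pi_t,G\rangle\le\int_\T G\,du$ for $t$ rational and $G$ in a countable dense family of nonnegative test functions, and the bound extends to all $t$ by continuity of the path rather than by the c\`adl\`ag/cocountable-time argument you invoke. Your version is also valid; this is just a minor simplification. The generator computations and the order $t/N$ estimate on $\langle M^{N,G}\rangle_t$ are correct.
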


The proof of this lemma is similar to the one of Proposition 3.1 in
\cite{lms}.  

Let $\mathcal{A}$ be the set of all trajectories
$\pi(t,du)=\rho(t,u)du$ in $D([0,\infty),\M_{+,1})$ whose density $\rho$
is a weak solution to the Cauchy problem \eqref{rdeq} for
some initial profile $\rho_{0}:\T\to[0,1]$.

\begin{lemma}
\label{conc}
All limit points $\mathbf{Q}^*$ of the sequence $\{\mathbf{Q}^N:N\ge1\}$ are 
concentrated on paths $\pi(t,du) = \rho(t,u) du$ in $\mathcal{A}$ :
\begin{equation*}
\mathbf{Q}^*(\mathcal{A}) \;=\; 1\;.
\end{equation*}
\end{lemma}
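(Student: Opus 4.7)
The plan is to use the standard Dynkin martingale characterization of the hydrodynamic equation, adapted so that the initial distribution is the stationary measure rather than one associated to a deterministic profile. Fix $T>0$ and a test function $G$ in $C^{1,2}([0,T]\times\T)$. Under $\mathbf{Q}^N$, the process
\begin{equation*}
M^{N,G}_t \;=\; \lan\pi^N_t, G_t\ran - \lan\pi^N_0, G_0\ran - \int_0^t \big(\partial_s + \mathcal{L}_N\big)\lan\pi^N_s, G_s\ran\, ds
\end{equation*}
is a martingale. A direct computation gives that the Kawasaki contribution to $\mathcal{L}_N\lan\pi^N_s, G_s\ran$ is $(1/2)\lan\pi^N_s, \Delta_N G_s\ran$, where $\Delta_N$ is the discrete Laplacian, which converges uniformly to $(1/2)\Delta G_s$, while the Glauber contribution produces $N^{-1}\sum_{x}G_s(x/N)\,(1-2\eta^N_s(x))\,c(x,\eta^N_s)$. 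Standard computations show that the predictable quadratic variation $\lan M^{N,G}\ran_T$ is of order $N^{-1}$, so $M^{N,G}_T \to 0$ in $L^2(\mathbf{Q}^N)$.

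Next, I would close the equation by replacing the local cylinder expression $(1-2\eta(x))\,c(x,\eta)$ by $F(\rho^{\e N}(x))$, where $\rho^{\e N}(x)=(2\e N+1)^{-1}\sum_{|y-x|\le \e N}\eta(y)$ is the spatial block average. This is the content of the one and two block estimates; by definition of $B$ and $D$, once $c(x,\eta)$ is replaced by its grand-canonical expectation at the local density, the Glauber term becomes $\lan F(\rho^{\e N}_s), G_s\ran + o(1)$, and sending $\e\to 0$ after $N\to\infty$ yields $\lan F(\rho_s), G_s\ran$ in the limit. The replacement lemmas need an entropy bound $H(\mu_N\,|\,\nu^N_{1/2})\le CN$; this bound, despite the long-range correlations of $\mu_N$ noted in \cite{bj}, follows from a comparison of Dirichlet forms in the spirit of \cite{dfl}, and is the main technical obstacle of the argument.

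Combining these two steps, for any limit point $\mathbf{Q}^*$ of a weakly convergent subsequence $\mathbf{Q}^{N_k}\to\mathbf{Q}^*$, and any $G\in C^{1,2}([0,T]\times\T)$,
\begin{equation*}
\lan\pi_T, G_T\ran - \lan\pi_0, G_0\ran
- \int_0^T \lan\rho_s, \partial_s G_s + (1/2)\Delta G_s\ran ds
- \int_0^T \lan F(\rho_s), G_s\ran ds \;=\; 0
\end{equation*}
for $\mathbf{Q}^*$-a.e.\ path $\pi(s,du)=\rho(s,u)du$, where the use of Lemma \ref{tight} guarantees that $\pi_s$ is indeed absolutely continuous with density in $[0,1]$ for almost every $s$. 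Since a countable dense family of test functions $G$ suffices to determine the weak formulation, this identity holds simultaneously for all admissible $G$ with $\mathbf{Q}^*$-probability one, which is exactly the weak formulation of the Cauchy problem \eqref{rdeq} on $[0,T]$ with initial profile $\rho_0$. Since $T>0$ is arbitrary and the set $\mathcal{A}$ is defined by weak solutions on $[0,\infty)$, taking $T\to\infty$ along integers yields $\mathbf{Q}^*(\mathcal{A})=1$, as required.
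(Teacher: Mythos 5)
Your proposal is correct and follows essentially the approach the paper points to (Dynkin martingale, vanishing quadratic variation, one- and two-block replacement for the Glauber term, then passage to the limit over a countable family of test functions), which is the content of the cited Lemma A.1.1 of \cite{kl}. One small remark: the entropy bound $H(\mu_N\,|\,\nu^N_{1/2})\le CN$, which you single out as the main technical obstacle, is in fact immediate here, since $\nu^N_{1/2}$ is the uniform measure on $X_N$ and hence $H(\mu\,|\,\nu^N_{1/2})\le N\log 2$ for every probability measure $\mu$ on $X_N$; combined with stationarity of $\mu_N$ and the entropy-production inequality this directly yields the Dirichlet-form bound $D_K(\mu_N)\le C/N$ needed for the replacement lemma, so no comparison argument is required.
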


The proof of this lemma is similar to the one of Lemma A.1.1 in \cite{kl}.

\begin{proof}[Proof of Theorem \ref{hsl}]
Fix a positive $\de>0$. Let $\mathcal{E}_\de$ be the
$\de$-neighborhood of $\mathcal{E}$ in $\mathcal{M}_+$ :
\begin{align*}
\mathcal{E}_\de \;:=\; \{\pi\in\M_+:\inf_{\bar{\pi}\in\E} 
d(\pi , \bar{\pi}) < \de\}\;.
\end{align*}
Denote by $\mathcal{E}_\de^c$ the complement of the set
$\mathcal{E}_\de$.  The assertion of Theorem \ref{hsl} can be
rephrased as
\begin{equation*}
\lim_{N\to\infty}\mathcal{P}_N(\mathcal{E}_\de^c) \;=\; 0\;.
\end{equation*}
Therefore, to conclude the theorem it is enough to show that any limit
point of the sequence $\mathcal{P}_N(\mathcal{E}_\de^c)$ is equal to
zero.

Fix $T>0$. Since the measure $\mu_N$ is invariant under the dynamics,
\begin{equation}
\label{eq3.1}
\mathcal{P}_N(\mathcal{E}_\de^c) \;=\; 
\mathbf{Q}^N(\pi: \pi_T\in\mathcal{E}_\de^c)\;.
\end{equation}
Let $\mathbf{Q}^{*}$ be a limit point of $\{\mathbf{Q}^{N}:N\ge1\}$
and take a subsequence $N_k$ so that the sequence
$\{\mathbf{Q}^{N_{k}}:k\ge1\}$ converges to $\mathbf{Q}^{*}$ as
$k\to\infty$.  Note that the set $\{\pi:
\pi_{T}\in\mathcal{E}_{\de}^{c}\}$ is not closed in
$D([0,\infty),\M_{+})$. However, we claim that
\begin{equation}
\label{bound3.1}
\varlimsup_{k\to\infty}\mathbf{Q}^{N_{k}}(\pi: \pi_T\in\mathcal{E}_\de^c)
\;\le\; \mathbf{Q}^{*}(\{\pi : \pi_T\in\mathcal{E}_\de^c\}\cap\mathcal{A})\;,
\end{equation}
where $\mathcal{A}$ is the set introduced just before Lemma
\ref{conc}.  Indeed, denote by $\overline{\{\pi:
  \pi_{T}\in\mathcal{E}_{\de}^{c}\}}$ the closure of the set $\{\pi:
\pi_{T}\in\mathcal{E}_{\de}^{c}\}$ under the Skorokhod topology. By
definition of the weak topology and by Lemma \ref{conc}, 
\begin{align*}
\varlimsup_{k\to\infty} \mathbf{Q}^{N_{k}}(\pi: \pi_T\in\mathcal{E}_\de^c)
\;\le\; \mathbf{Q}^{*}(\overline{\{\pi: \pi_T\in\mathcal{E}_\de^c\}}) 
\;=\; \mathbf{Q}^{*}(\overline{\{\pi:
  \pi_T\in\mathcal{E}_\de^c\}}\cap\mathcal{A})\; .
\end{align*}
It remains to prove that
\begin{equation*}
\overline{\{\pi: \pi_T\in\mathcal{E}_\de^c\}}\cap\mathcal{A}
\;=\; \{\pi: \pi_T\in\mathcal{E}_\de^c\}\cap\mathcal{A}\;.
\end{equation*}
Let $\pi$ be a path in $\overline{\{\pi:
  \pi_T\in\mathcal{E}_\de^c\}}\cap\mathcal{A}$.  Then there exists a
sequence $\{\pi^{n}:n\ge1\}$ such that $\pi^{n}$ converges to $\pi$ in
$D([0,\infty),\M_{+})$ as $n\to\infty$ and $\pi^{n}_{T}$ belongs to
$\mathcal{E}_{\de}^{c}$ for any $n\ge1$.  Since $\mathcal{A}$ is
contained in $C([0,\infty),\M_{+,1})$, the sequence
$\{\pi^{n}:n\ge1\}$ converges to $\pi$ under the uniform topology.
Hence $\pi_{T}^{n}$ converges to $\pi_{T}$. Since
$\mathcal{E}_{\de}^{c}$ is closed in $\M_{+}$, $\pi_{T}$ also belongs
to $\mathcal{E}_{\de}^{c}$, which proves \eqref{bound3.1}.

Fix a path $\pi(t,du)=\rho(t,u)du$ in $\mathcal{A}$.  By Proposition
\ref{conv}, there exists a density profile $\rho_{\infty}$ in $\E$
such that $\rho_{t}$ converges to $\rho_{\infty}$ in $C^{2}(\T)$.
Hence,
\begin{equation}
\label{conv3.1}
\mathcal{A} \;\subset\; \bigcup_{j\ge 1} \bigcap_{k\ge j} 
\{\pi_k \in\mathcal{E}_\de\}\;.
\end{equation}
By \eqref{eq3.1} and \eqref{bound3.1}, 
\begin{equation*}
\varlimsup_{N\to\infty}\mathcal{P}_N(\mathcal{E}_\de^c) \;\le\;
\mathbf{Q}^{*}(\{\pi: \pi_k\in\mathcal{E}_\de^c\}\cap\mathcal{A})
\quad\text{ for all } k\ge 1\;.
\end{equation*}
Since this bound holds for any $k\ge 1$,
\begin{equation*}
\varlimsup_{N\to\infty}\mathcal{P}_N(\mathcal{E}_\de^c) \;\le\;
\varlimsup_{k\to\infty} \mathbf{Q}^{*}
(\{\pi_k\in\mathcal{E}_\de^c\}\cap\mathcal{A})
\;\le\; \mathbf{Q}^{*}
\Big(\bigcap_{j\ge 1} \bigcup_{k\ge j}
\{\pi_k\in\mathcal{E}_\de^c\}\cap\mathcal{A}\Big)\;.
\end{equation*}
This latter set is empty in view of \eqref{conv3.1}, which completes
the proof of the theorem.
\end{proof}

\section{The rate function $I_{T}(\cdot|\ga)$}
\label{sec4}

We prove in this section that the large deviations rate function is
lower semicontinuous and has compact level sets.  These properties
play a fundamental role in the proof of the static large deviation
principle, cf. \cite{bg, f}. One of the main steps in the proof of
these properties is Proposition \ref{energy}. It asserts that there
exists a finite constant $C_0$ such that for all trajectory $\pi(t,du)
= \rho(t,u)$ whose density $\rho$ has finite energy, we have
$Q(\pi) \le C_0 (I_{T}(\pi|\ga) +1)$. Such bound was first proved in 
\cite{qrv}.

\begin{proposition}
\label{cont}
Let $\pi$ be a path in $D([0,T],\M_{+})$ such that $I_{T}(\pi|\ga)$ is
finite.  Then $\pi(0,du)=\ga(u)du$ and $\pi$ belongs to
$C([0,T],\M_{+,1})$.
\end{proposition}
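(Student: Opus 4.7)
\medskip

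\noindent\textbf{Proof plan.} The idea is to test the inequality $\bar J_G(\pi)\le I_T(\pi|\ga)$ against product test functions $G(r,u)=cH(u)\phi_\e(r)$, with $H\in C^2(\T)$, $c\in\R$, and $\phi_\e$ a smooth approximation of the indicator of an interval of length $O(\e)$. Finiteness of $I_T(\pi|\ga)$ forces $\mathcal{Q}(\pi)<\infty$, which already places $\pi$ in $D([0,T],\M_{+,1})$; therefore only the initial-value identity and the absence of jumps remain to be shown. The key uniform observation, used in both steps below, is that on the support of $\phi_\e$ each of $\De G$, $(\nab G)^2$ and $e^{\pm G}-1$ is bounded by a constant depending only on $c$ and $H$, while the density stays in $[0,1]$; hence the three corresponding contributions to $\bar J_G(\pi)$ are $O(\e)$.

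For the initial condition I would take $\phi_\e$ linearly decreasing from $1$ at $r=0$ to $0$ at $r=\e$ and vanishing beyond. The boundary contribution to $\bar J_G(\pi)$ is $-c\lan\ga,H\ran$, while the time-derivative term $-c\int_0^T\phi_\e'(r)\lan\pi_r,H\ran\,dr$ equals $(c/\e)\int_0^\e\lan\pi_r,H\ran\,dr$ and, by right-continuity of $r\mapsto\lan\pi_r,H\ran$ at $0$, converges to $c\lan\pi_0,H\ran$ as $\e\to 0$. Passing to the limit yields $c\,[\lan\pi_0,H\ran-\lan\ga,H\ran]\le I_T(\pi|\ga)$ for every $c\in\R$, hence $\pi(0,du)=\ga(u)\,du$.

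For the absence of jumps at an interior time $s\in(0,T)$ I would use a tent $\phi_\e$ supported on $[s-\e,s+\e]$, equal to $1$ at $r=s$, with $\phi_\e'=1/\e$ on $(s-\e,s)$ and $\phi_\e'=-1/\e$ on $(s,s+\e)$. All boundary terms vanish. The time-derivative contribution is
\begin{equation*}
-\,\frac{c}{\e}\int_{s-\e}^{s}\lan\pi_r,H\ran\,dr\,+\,\frac{c}{\e}\int_{s}^{s+\e}\lan\pi_r,H\ran\,dr,
\end{equation*}
which converges to $c\,[\lan\pi_s,H\ran-\lan\pi_{s^-},H\ran]$ thanks to the existence of the left-limit $\pi_{s^-}$ and the right-continuity of $\pi$ at $s$; the remaining three terms are again $O(\e)$. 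Sending $\e\to 0$ and varying $c\in\R$ forces $\pi_{s^-}=\pi_s$, so $\pi$ has no jump at $s$. The endpoint $s=T$ is handled by the obvious one-sided tent, and continuity at $s=0$ is automatic from the first part and right-continuity.

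The only mild technicality I foresee is that the piecewise-linear profiles described above are not $C^{1,2}$; one should replace them by $C^\infty$ mollifications, which alter $\phi_\e$ only on a set of Lebesgue measure $o(\e)$ and leave every limit intact because all integrands are uniformly bounded. This qualitative argument bypasses any quantitative balance between $c$ and $\e$, which would otherwise be the main obstacle because of the exponential terms in $\bar J_G$; since one is free to send $\e\to 0$ first with $c$ and $H$ fixed, those terms are automatically killed.
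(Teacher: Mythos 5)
Your argument is correct, and it takes a genuinely different route from the paper. The paper's proof cites Lemma 3.5 of \cite{bdgjl2}: by testing $\bar J_G$ against $G(r,u)=K\varphi(r)g(u)$ with $\varphi$ a smoothed indicator of $[s,t]$ and \emph{optimizing over the amplitude} $K$, one extracts the quantitative modulus $|\lan\pi_t,g\ran-\lan\pi_s,g\ran|\le C(\log(t-s)^{-1})^{-1}\{I_T(\pi|\ga)+1\}$; the logarithmic (rather than H\"older) rate is precisely the price of the exponential reaction terms, which forbid choosing $K\sim(t-s)^{-1/2}$ as in the conservative case. Your proposal deliberately avoids this optimization: you fix $c$ and $H$, send the width $\e$ of the (smoothed) ramp or tent to zero so that the $\De G$, $(\nab G)^2$ and $e^{\pm G}-1$ contributions all vanish at rate $O(\e)$ with constants depending only on $c,H$, and only \emph{afterwards} let $c\to\pm\infty$ to force $\lan\pi_0,H\ran=\lan\ga,H\ran$ and $\lan\pi_{s^-},H\ran=\lan\pi_s,H\ran$. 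This is a cleaner qualitative argument and it does establish the proposition; what it gives up is the explicit modulus of continuity, which the paper does not use elsewhere anyway. Two small points worth tightening when writing it up: membership in $D([0,T],\M_{+,1})$ follows directly from the convention $J_G(\pi)=\infty$ off that set (not from $\mathcal{Q}(\pi)<\infty$ per se, although both facts are immediate consequences of finiteness of $I_T$); and for $s=T$ the one-sided tent produces a surviving boundary term $c\lan\pi_T,H\ran$ which is exactly what cancels the Ces\`aro limit $-c\lan\pi_{T^-},H\ran$ of the time-derivative piece, so the endpoint case goes through as you indicate.
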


\begin{proof}
The proof of this proposition is similar to the one of Lemma 3.5 in
\cite{bdgjl2}.  Actually, the computation performed in the proof of Lemma
3.5 in \cite{bdgjl2} gives that, for any $g$ in $C^{2}(\T)$ and any $0\le
s < t\le T$,
\begin{align}
\label{bound4.1}
|\lan\pi_{t},g\ran - \lan\pi_{s}, g\ran | \;\le\;
C \a_{s,r}\{ I_{T}(\pi|\ga) +1\}\;,
\end{align}
for some positive constant $C=C(g)$, which depends only on $g$.
In the inequality \eqref{bound4.1}, the constant $\a_{s,r}$
is given by $(\log{(r-s)^{-1}})^{-1}$.
\eqref{bound4.1} implies the desired continuity.
\end{proof}

The next proposition plays an important role in the proof of Theorem
\ref{lsc}.

\begin{proposition}
\label{energy}
There exists a constant $C_0>0$ such that, for any path
$\pi(t,du)=\rho(t,u)du$ in $D([0,T],\mathcal{M}_{+,1})$ with finite
energy, we have
\begin{equation*}
\int_0^{T} dt\ \int_{\T} du\ \frac{|\nabla\rho(t,u)|^{2}}{\chi(\rho(t,u))}
\;\le\; C_0\,\{ I_{T}(\pi|\ga)+1\}\;.
\end{equation*}
\end{proposition}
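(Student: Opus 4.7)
The strategy is to exploit the variational representation $I_T(\pi|\ga)=\sup_G J_G(\pi)$ and test $J_G$ against a cleverly chosen smooth function $G$ so that, after integration by parts in space, $J_G(\pi)$ dominates a positive multiple of the weighted Dirichlet integral $\int_0^T\!\!\int_\T|\nabla\rho|^2/\chi(\rho)\,du\,dt$ minus an additive constant; rearranging then gives the advertised bound.

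The natural candidate, suggested by $(d/dr)\log(r/(1-r))=1/\chi(r)$, is as follows. For parameters $0<\e<1/2$ and $0<\de<1/2$ and a joint space--time mollification $\rho^n=\rho\ast\iota_n$, set
\[
G^{\e,n,\de}_t(u)\;:=\;\e\,\log\!\Bigl(\frac{\rho^n_t(u)+\de}{1-\rho^n_t(u)+\de}\Bigr).
\]
This belongs to $C^{1,2}([0,T]\times\T)$, is bounded by $\e\log((1+\de)/\de)$, and satisfies $\nabla G=\e\,\a_\de(\rho^n)\,\nabla\rho^n$, where $\a_\de(r):=(1+2\de)/[(r+\de)(1-r+\de)]$ increases monotonically to $1/\chi(r)$ as $\de\to 0$ and obeys $\chi(r)\a_\de(r)\le 1+2\de$ for all $r\in[0,1]$. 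Substituting $G=G^{\e,n,\de}$ into $J_G(\pi)$, integrating the $\De G$ term by parts in space, and sending $n\to\infty$ (using $\mc{Q}(\pi)<\infty$ to pass $\nabla\rho^n\to\nabla\rho$ in $L^2_{t,u}$), the two gradient-quadratic contributions combine into at least
\[
\frac{\e\,(1-\e(1+2\de))}{2}\int_0^T\!\!\int_\T|\nabla\rho|^2\,\a_\de(\rho)\,du\,dt,
\]
which for $\e=1/4$ is bounded below by $c_0\int|\nabla\rho|^2\,\a_\de(\rho)\,du\,dt$ with a universal $c_0>0$. The remaining contributions to $J_G$ --- the boundary terms $\lan\pi_T,G_T\ran-\lan\ga,G_0\ran$, the time-derivative term $\int_0^T\lan\rho_t,\partial_tG_t\ran dt$, and the reaction terms $\int_0^T\!\!\int_\T\{B(\rho)(e^G-1)+D(\rho)(e^{-G}-1)\}$ --- are bounded for fixed $\e,\de$ by a constant $C(\e,\de)$ depending only on $\|B\|_\infty$, $\|D\|_\infty$, $T$ and $\|G\|_\infty$. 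This yields $I_T(\pi|\ga)+C(\de)\ge c_0\int|\nabla\rho|^2\,\a_\de(\rho)\,du\,dt$.

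The \textbf{main obstacle} is passing to the limit $\de\to 0$ to replace $\a_\de$ by the sharp weight $1/\chi$: although $\a_\de\nearrow 1/\chi$ pointwise, the error $C(\de)$ is driven by $\|e^G\|_\infty\sim\de^{-\e}$ and therefore blows up. The resolution, in the spirit of \cite{qrv}, is to refine the construction so the test function remains uniformly bounded in the truncation parameter while still producing a positive multiple of $|\nabla\rho|^2/\chi(\rho)$ in the gradient-quadratic part of $J_G$ --- concretely by replacing the logarithm by a primitive of $1/\chi$ multiplied by a smooth cutoff that vanishes on a small neighborhood of $\{0,1\}$, and then optimizing over the cutoff width. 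The time-derivative contribution is handled by an integration by parts in time against $\rho$ combined with the uniform $L^\infty$ bound on $G$. Finally, the exceptional set $\{\chi(\rho)=0\}=\{\rho\in\{0,1\}\}$ contributes nothing, since $\nabla\rho=0$ a.e.\ there by the chain rule for Sobolev functions. Collecting these estimates and absorbing the constant yields $\int_0^T\!\!\int_\T|\nabla\rho|^2/\chi(\rho)\,du\,dt\le C_0(I_T(\pi|\ga)+1)$.
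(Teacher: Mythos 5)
Your overall strategy -- test $J_G$ against $G=h'(\rho)$ where $h'$ is (up to normalization) a primitive of $1/\chi$, regularize $\chi$ and mollify $\rho$, integrate the Laplacian term by parts to produce $\tfrac12\!\int\!\lan\nab\rho,\nab G\ran$, and absorb the quadratic defect $-\tfrac12\!\int\!\lan\chi(\rho),(\nab G)^2\ran$ -- is the same as the paper's. The gradient-quadratic estimate is essentially identical: the paper takes $H=h_a'(\rho^{\e,\de})$ with $h_a''=(2\chi_a)^{-1}$, which with $\chi\le\chi_a$ directly gives $B^1_{h_a'(\rho)}(\pi)\ge\tfrac18\!\int\!|\nab\rho|^2/\chi_a$; your multiplicative $\e$ followed by optimizing plays the role of the paper's built-in prefactor $1/(2(1+2a))$.

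There is, however, a genuine error in your diagnosis of the difficulty, and consequently your proposed ``resolution'' is both unnecessary and misdirected. You claim the reaction contribution blows up as $\de\to 0$ because $\|e^G\|_\infty\sim\de^{-\e}$. The sup norm of $e^G$ does diverge, but the integral $\int B(\rho)(e^{G}-1)$ does not: by \eqref{02}, $B(\rho)=(1-\rho)\tilde B(\rho)$ and $D(\rho)=\rho\tilde D(\rho)$ with $\tilde B,\tilde D$ bounded, and $e^G=\bigl((\rho+\de)/(1-\rho+\de)\bigr)^\e$ is large precisely where $1-\rho$ is small. Writing $s=1-\rho$, one gets $B(\rho)e^G\le\|\tilde B\|_\infty(1+\de)^\e\,s^{1-\e}$, which is uniformly bounded for $\e<1$ independently of $\de$; similarly for $D$ near $\rho=0$. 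This is exactly the content of the second bound in \eqref{bound4.4}, obtained in Lemma~\ref{onB} using \eqref{02}, and it makes the cutoff-and-optimize scheme you sketch in the last paragraph superfluous. That scheme also reintroduces the very problem you want to avoid: a cutoff vanishing near $\{0,1\}$ discards the part of $\int|\nab\rho|^2/\chi(\rho)$ where the integrand is largest, and recovering it requires shrinking the cutoff, which then forces you back to analyzing the behavior of $G$ near the endpoints.

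The other step you treat too lightly is the time-derivative term. Saying it is ``handled by an integration by parts in time against $\rho$'' is not enough: $\rho$ has no time derivative in general, and the test function you built depends on a mollified $\rho^n$, not on $\rho$ itself, so there is no direct chain rule. The paper's resolution is structural: take $H=h_a'(\rho^{\e,\de})$ and observe that, because $\rho^{\e,\de}$ is smooth in time, $L_H(\pi^{\e,\de})=\lan h_a(\rho^{\e,\de}_T)\ran-\lan h_a(\rho^{\e,\de}_0)\ran$, a quantity bounded by $2\sup_{[0,1]}|h_a|\le C_0$ uniformly in $\e,\de,a$. One must then compare $L_H(\pi^{\e,\de})$ with the term $L_{H^{\e,\de}}(\pi)$ that appears in the variational formula \eqref{var}, and this is the role of Lemma~\ref{decom} (control of $R^{\e,\de}$). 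Without this comparison, the linear-in-$G$ piece of $J_G(\pi)$ is not controlled, and the estimate does not close. In short: same method, but your account misidentifies the real obstruction, omits the use of the factorization \eqref{02} that actually controls the reaction terms uniformly in the regularization, and does not articulate the decomposition that bounds the $L_H$ term.
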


We fix some notation before proving Proposition \ref{energy}.

Let $H^{1}(\T)$ be the Sobolev space of functions $G$ with generalized
derivatives $\nabla G$ in $L^{2}(\T)$. $H^{1}(\T)$ endowed with the
scalar product $\lan\cdot,\cdot\ran_{1,2}$, defined by
\begin{equation*}
\lan G, H\ran_{1,2}\;=\;\lan G, H\ran + \lan \nabla G, \nabla H\ran\;,
\end{equation*}
is a Hilbert space. The corresponding norm is denoted by $\|\cdot\|_{1,2}$:
\begin{equation*}
\|G\|_{1,2}^{2}\;:=\;\int_\T |G(u)|^2 du + \int_\T |\nabla G(u)|^2 du\;.
\end{equation*}
For a Banach space $(\mathbb{B}, \|\cdot\|_{\mathbb{B}})$ and $T>0$,
we denote by $L^{2}([0,T], \mathbb{B})$ the Banach space of measurable
functions $U:[0,T]\to\mathbb{B}$ for which
\begin{equation*}
\|U\|^{2}_{L^{2}([0,T],\mathbb{B})}
\;=\;\int_{0}^{T} \|U_{t}\|_{\mathbb{B}}^{2}\  dt < \infty
\end{equation*}
holds. For each $p\ge1$ and $T>0$, let $L^p([0,T]\times\T)$
be the space of all real $p$-th integrable functions
$U:[0,T]\times\T\to\R$ with respect to the Lebesgue measure: 
$\int_{0}^{T}dt\int_\T |U(t,u)|^p du < \infty$.

Fix a path $\pi(t,du)=\rho(t,u)du$ in $D([0,T],\M_{+,1})$ with finite
energy.  For a smooth function $G: [0,T]\times\bb T\to\bb R$ and for
a bounded function $H$ in $L^{2}([0,T],H^{1}(\T))$, define the
functionals
\begin{align*}
& L_{G}(\pi) \;=\; \lan \pi_{T}, G_{T}\ran - \lan \pi_0 , G_{0}\ran -
\int_{0}^{T} dt \ \lan \pi_{t}, \partial_{t}G_{t} \ran\;, \\
&\quad B^{1}_{H}(\pi) \;=\; \frac{1}{2}\int_{0}^{T} dt \ 
\lan\nabla\rho_{t}, \nabla H_{t}\ran
\;-\; \frac{1}{2}\int_{0}^{T} dt \ 
\lan \chi(\rho_{t}), (\nabla H_{t})^{2}\ran\;, \\
&\qquad B^{2}_{H}(\pi) \;=\;
\int_0^Tdt\ \Big\{\lan B(\rho_t), e^{H_t}-1\ran 
+ \lan D(\rho_t), e^{-H_t}-1 \ran \Big \}\;.
\end{align*}
Note that, for paths $\pi(t,du)$ such that $\pi(0,du) = \gamma(u) du$,
\begin{equation}
\label{var}
\sup_{H\in C^{1,2}([0,T]\times\T)}
\Big\{ L_{H}(\pi)+B_{H}^{1}(\pi)-B_{H}^{2}(\pi)\Big\} 
\;=\; I_{T}(\pi|\ga)\;.
\end{equation}

Consider the function $\phi:\R\to[0,\infty)$ defined by
\begin{align*}
\phi(r)\;:=\;
\begin{cases}
\dfrac{1}{Z}\exp{\{-\dfrac{1}{(1-r^{2})}\}}\ & \text{ if } |r|<1\;, \\
\ 0\ & \text{otherwise}\;,
\end{cases}
\end{align*}
where the constant $Z$ is chosen so that $\int_{\R}\phi(r) dr =1$.
For each $\de>0$, let
\begin{equation*}
\phi^{\de}(r) \;:=\; \dfrac{1}{\de}\,
\phi \Big(\dfrac{r}{\de}\Big)\;.
\end{equation*}
Since the support of the function $\phi^{\de}$ is contained in
$[-\de,\de]$, the function $\phi^{\de}$ can be regarded as a function
on $\T$.  To distinguish convolution in time from convolution in
space, we denote by $\psi^{\de}:\T\to[0,\infty)$ the function
$\phi^{\e}$ defined on $\T$ with $\e=\de$.

Denote by $f*g$ the space or time convolution of two functions $f$,
$g$:
\begin{equation*}
(f*g)(a) \;=\; \int f(a-b)\, g(b) \, db\;,
\end{equation*}
where the integral runs over $\bb R$ in the case where $f$, $g$ are
functions of time and over $\bb T$ in the case where $f$ and $g$ are
functions of space.

Throughout this section, we adopt the following notation: For a
bounded measurable function $\rho:[0,T]\times\T\to\R$, define the
smooth approximation in space, time and space-time by
\begin{align*}
& \rho^{\e}(t,u) \;:=\; [\rho(t,\cdot)*\psi^{\e}] (u) \;=\;
\int_{\T}\rho(t,u+v)\psi^{\e}(v) dv\;, \\
&\quad \rho^{\de}(t,u) \;:=\; [\rho(\cdot, u)*\phi^{\de}](t)
\;=\; \int_{-\de}^{\de}\rho(t+r,u)\phi^{\de}(r)dr\;, \\
&\qquad \rho^{\e, \de}(t,u) \;:=\; \int_{-\de}^{\de} dr\ 
\int_{\T} dv \ \rho(t+r,u+v) \psi^{\e}(v) \phi^{\de}(r)\;.
\end{align*}
In the above formulas, we extend the definition of $\rho$ to
$[-1,T+1]$ by setting $\rho_{t}=\rho_{0}$ for $-1\le t \le 0$ and
$\rho_{t}=\rho_{T}$ for $T \le t \le T+1$.  Remark that we use similar
notation, $\rho^{\e}$ and $\rho^{\de}$, for different
objects. However, $\rho^{\e}$ and $\rho^{\de}$ always represent a
smooth approximation of $\rho$ in space and time, respectively.  For
each $\pi(t,du)=\rho(t,u)du$, we also define paths
$\pi^{\e}(t,du)=\rho^{\e}(t,u)du$, $\pi^{\de}(t,du)=\rho^{\de}(t,u)du$
and $\pi^{\e,\de}(t,du)=\rho^{\e,\de}(t,u)du$.

We summarize some properties of $\rho^{\e}$ in the next proposition.
The proof is elementary and is thus omitted.

\begin{proposition}
\label{prope}
Let $\rho:[0,T]\times\T\to\R$ be a function in $L^{2}([0,T],
H^{1}(\T))$.  Then, for each $\e>0$, $\rho^{\e}$ and $\nabla\rho^{\e}$
converges to $\rho$ and $\nabla\rho$ in $L^{2}([0,T]\times\T)$,
respectively.  Moreover, if $\rho$ is bounded in $[0,T]\times\T$ and
the application $\lan\rho_{t}, g\ran$ is continuous on the time
interval $[0,T]$ for any function $g$ in $C^{\infty}(\T)$, then, for
each $\e>0$, $\rho^{\e}$ is uniformly continuous on $[0,T]\times\T$.
\end{proposition}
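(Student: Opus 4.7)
The plan is to handle the two assertions of Proposition \ref{prope} separately, with the main subtlety concealed in upgrading one-variable continuities to joint continuity in the second part.

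For the $L^{2}$-convergence as $\e \to 0$, I would invoke the standard mollifier argument. Since $\psi^{\e} \ge 0$ and $\int_{\T} \psi^{\e} = 1$, Young's inequality gives $\|\rho^{\e}(t,\cdot)\|_{2} \le \|\rho(t,\cdot)\|_{2}$ for a.e.\ $t$; and for each fixed $f \in L^{2}(\T)$, convolution with $\psi^{\e}$ converges to $f$ in $L^{2}(\T)$ as $\e \to 0$. This yields pointwise-in-$t$ convergence $\rho^{\e}(t,\cdot) \to \rho(t,\cdot)$ in $L^{2}(\T)$, together with the uniform bound $\|\rho^{\e}(t,\cdot) - \rho(t,\cdot)\|_{2}^{2} \le 4\,\|\rho(t,\cdot)\|_{2}^{2}$, which is integrable in $t$ since $\rho \in L^{2}([0,T], H^{1}(\T))$. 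Dominated convergence then delivers $\rho^{\e} \to \rho$ in $L^{2}([0,T] \times \T)$. The gradient statement follows from the same argument once one notes that spatial convolution commutes with the weak spatial derivative, so $\nabla \rho^{\e} = (\nabla \rho)^{\e}$, and $\nabla \rho \in L^{2}([0,T] \times \T)$ by hypothesis.

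For the uniform continuity, the key observation is to recognise $\rho^{\e}(t,u)$ as a test-function pairing. After the change of variable $y = u + v$, one has $\rho^{\e}(t,u) = \int_{\T} \rho(t,y)\, \psi^{\e}(y-u)\, dy = \lan \rho_{t}, g_{u} \ran$, where $g_{u}(y) := \psi^{\e}(y-u)$ lies in $C^{\infty}(\T)$ for each fixed $u \in \T$. Writing $M := \|\rho\|_{\infty}$, the spatial modulus is estimated directly by
\begin{equation*}
|\rho^{\e}(t,u) - \rho^{\e}(t,u')| \;\le\; M \int_{\T} |\psi^{\e}(y-u) - \psi^{\e}(y-u')| \, dy\;,
\end{equation*}
and the right-hand side is a modulus of continuity of the smooth function $\psi^{\e}$, which is \emph{independent of $t$}. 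Temporal continuity at each fixed $u$ is the hypothesis on $t \mapsto \lan \rho_{t}, g \ran$ applied to the smooth test function $g = g_{u}$.

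To promote these two one-variable moduli to joint (hence, by compactness of $[0,T] \times \T$, uniform) continuity, I would use the triangle-inequality splitting
\begin{equation*}
\rho^{\e}(t_{n}, u_{n}) - \rho^{\e}(t,u) \;=\; [\rho^{\e}(t_{n}, u_{n}) - \rho^{\e}(t_{n}, u)] \,+\, [\rho^{\e}(t_{n}, u) - \rho^{\e}(t,u)]
\end{equation*}
for any $(t_{n}, u_{n}) \to (t, u)$ in $[0,T] \times \T$. The first bracket is controlled by the spatial modulus above, uniformly in $t_{n}$; the second vanishes by pointwise temporal continuity at the single point $u$. The main potential obstacle I anticipate is the absence of any a priori uniform-in-$u$ control on the temporal modulus, but the splitting is specifically designed to avoid this: all the uniformity in $t$ is absorbed by the spatial estimate, while the temporal estimate is used only at one fixed spatial point. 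Continuity on the compact set $[0,T] \times \T$ then upgrades automatically to uniform continuity, completing the proof.
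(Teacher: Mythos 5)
Your proof is correct. The paper explicitly omits the proof of Proposition \ref{prope}, declaring it elementary, so there is no argument in the text to compare against; what you have written is a complete and correct instantiation of the standard mollifier argument. The $L^2$-convergence part follows the usual pattern (Young's inequality for the uniform-in-$t$ bound, the approximate-identity lemma for pointwise-in-$t$ convergence, dominated convergence over $[0,T]$, and $\nabla\rho^\e = (\nabla\rho)*\psi^\e$ to reduce the gradient statement to the same argument). For the uniform-continuity part, the key maneuver — controlling the spatial modulus of $\rho^\e$ uniformly in $t$ via the $L^1$-translation modulus of the fixed smooth kernel $\psi^\e$, and then obtaining temporal continuity at each fixed $u$ from the hypothesis applied to $g = \psi^\e(\cdot - u) \in C^\infty(\T)$ — is exactly right, and the triangle-inequality splitting correctly upgrades the two one-variable continuities to joint continuity, hence uniform continuity by compactness of $[0,T]\times\T$.
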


For each $a>0$, define the functions $h=h_a$ and $\chi_{a}$ on $[0,1]$
by
\begin{gather*}
h(\rho) \;:=\; \dfrac{1}{2(1+2a)} \, \Big\{ (\rho+a)
\log{(\rho+a)}+(1-\rho+a)\log{(1-\rho+a)} \Big\}\;, \\
\chi_{a}(\rho) \;:=\; (\rho+a)(1-\rho+a)\;.
\end{gather*}
Note that $h''=(2\chi_{a})^{-1}$.

Until the end of this section, $0<C_0<\infty$ represents a constant
independent of $\e$, $\de$ and $a$ and which may change from line to
line.

\begin{lemma}
\label{decom}
Let $R^{\e,\de}$ be the difference between $L_{H}(\pi^{\e,\de})$ and
$L_{H^{\e,\de}}(\pi)$:
\begin{equation*}
R^{\e,\de} \;=\; L_{H}(\pi^{\e,\de}) \;-\; L_{H^{\e,\de}}(\pi)\;,
\end{equation*}
where $H = h_a'(\rho^{\e,\de})$. Then, for any fixed $\e>0$,
$R^{\e,\de}$ converges to $0$ as $\de\downarrow0$.
\end{lemma}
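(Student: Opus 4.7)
The plan is to show that, as $\de\downarrow 0$ with $\e>0$ fixed, both $L_H(\pi^{\e,\de})$ and $L_{H^{\e,\de}}(\pi)$ converge to the common limit $\int_\T [h_a(\rho^\e(T,u)) - h_a(\rho^\e(0,u))]\,du$, from which $R^{\e,\de}\to 0$ follows. A first reduction exploits that $\psi^\e$ is even, so the spatial convolution $P_\e$ is self-adjoint on $L^2(\T)$: pushing $P_\e$ onto $\pi$ in each of the three terms of $L_{H^{\e,\de}}(\pi)$ gives $L_{H^{\e,\de}}(\pi)=L_{H^\de}(\pi^\e)$, reducing the problem to comparing $L_H(\pi^{\e,\de})$ and $L_{H^\de}(\pi^\e)$.

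Since $\rho^{\e,\de}$ is smooth in both variables, so is $H=h_a'(\rho^{\e,\de})$, and the chain rule
\begin{equation*}
\rho^{\e,\de}\,\partial_t H \;=\; \rho^{\e,\de}\, h_a''(\rho^{\e,\de})\,\partial_t \rho^{\e,\de} \;=\; \partial_t G_a(\rho^{\e,\de}),
\end{equation*}
with $G_a(r):=\int_0^r s\, h_a''(s)\,ds$, collapses $L_H(\pi^{\e,\de})$ via integration by parts in $t$ to $\int_\T[F_a(\rho^{\e,\de}(T,u))-F_a(\rho^{\e,\de}(0,u))]\,du$, where $F_a(r):=r\,h_a'(r)-G_a(r)$. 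A direct computation gives $F_a'=h_a'$, hence $F_a=h_a+\text{const}$, and this quantity equals $\int_\T[h_a(\rho^{\e,\de}(T,u))-h_a(\rho^{\e,\de}(0,u))]\,du$. Propositions \ref{cont} and \ref{prope} jointly imply that $\rho^\e$ is uniformly continuous on $[0,T]\times\T$ (assuming $I_T(\pi|\ga)<\infty$, which we may do), so $\rho^{\e,\de}\to\rho^\e$ uniformly, whence the desired limit for $L_H(\pi^{\e,\de})$ follows.

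For $L_{H^\de}(\pi^\e)$, the boundary terms are handled using the decomposition $H^\de(T,u)=\int_{-\de}^0\phi^\de(r)H(T+r,u)\,dr+\tfrac12 H(T,u)$, which together with the uniform convergence $\rho^{\e,\de}\to\rho^\e$ yields $H^\de(T,u)\to h_a'(\rho^\e(T,u))$ uniformly (and similarly at $0$). For the integral term, since $H$ is smooth on $[0,T]$ and $\bar H$ is its extension by constants, one has $\partial_t H^\de(t,u)=\int_0^T \phi^\de(t-s)\,\partial_s H(s,u)\,ds$, and Fubini gives
\begin{equation*}
\int_0^T\!\int_\T \rho^\e\,\partial_t H^\de\,du\,dt \;=\; \int_\T\!du \int_0^T \partial_s H(s,u)\,\tilde\rho^{\e,\de}(s,u)\,ds,
\end{equation*}
where $\tilde\rho^{\e,\de}(s,u):=\int_0^T\phi^\de(t-s)\rho^\e(t,u)\,dt$ coincides with $\rho^{\e,\de}(s,u)$ for $s\in(\de,T-\de)$. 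Splitting this $s$-integral into the bulk part (which telescopes to $\int_\T[G_a(\rho^{\e,\de}(T))-G_a(\rho^{\e,\de}(0))]\,du$ by the same chain rule as above) and a boundary correction supported on $[0,\de]\cup[T-\de,T]$, and assembling with the limits of the boundary terms, reproduces $\int_\T[h_a(\rho^\e(T))-h_a(\rho^\e(0))]\,du$ provided the correction vanishes.

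The main obstacle is exactly this boundary correction: near the endpoints, $\|\partial_t H\|_\infty$ is of order $1/\de$ while $\tilde\rho^{\e,\de}-\rho^{\e,\de}$ is of order $1$, so a direct $L^\infty$ bound yields a persistent $O(1)$ quantity. The fix is to integrate by parts in $s$ inside each boundary piece: at $s=0$ this turns the correction into $\rho^\e(0,u)\bigl[-\tfrac12 H(0,u)+\int_0^\de H(s,u)\phi^\de(s)\,ds\bigr]$, with the analogous expression at $T$. Since $H(s,u)=h_a'(\rho^{\e,\de}(s,u))$ and $\rho^{\e,\de}(s,u)\to\rho^\e(0,u)$ uniformly in $s\in[0,\de]$ as $\de\to 0$, both $H(0,u)$ and the average $\int_0^\de H(s,u)\phi^\de(s)\,ds$ tend to $h_a'(\rho^\e(0,u))$ and $\tfrac12 h_a'(\rho^\e(0,u))$ respectively, so the bracket vanishes uniformly in $u$. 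This delivers $R^{\e,\de}\to 0$.
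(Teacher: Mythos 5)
Your proposal is correct, and it takes a genuinely different route from the paper's proof. The paper argues more directly: it performs the spatial change of variables $\lan\rho_t^{\e,\de},H_t\ran=\lan\rho_t^\de,H_t^\e\ran$, separates $R^{\e,\de}$ into a boundary error $R_1^{\e,\de}$ (handled by bounded convergence once one knows $H^\de(T,\cdot)\to h_a'(\rho^\e(T,\cdot))$ and $\rho^{\e,\de}(T,\cdot)\to\rho^\e(T,\cdot)$ pointwise) and a time-integral error $R_2^{\e,\de}$ bounded by $C_0\,\de\,\|\partial_t H^\e\|_\infty$, and then kills $\de\|\partial_t H^\e\|_\infty$ by exploiting the oddness of $(\phi^\de)'$: writing $\partial_t\rho^{\e,\de}(t,u)=\int_{-\de}^0(\phi^\de)'(r)\{\rho^\e(t+r,u)-\rho^\e(t-r,u)\}dr$, the bound becomes $O(\omega(2\de))$ where $\omega$ is the modulus of uniform continuity of $\rho^\e$. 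You instead compute the common limit $\int_\T[h_a(\rho^\e(T))-h_a(\rho^\e(0))]\,du$ of both $L_H(\pi^{\e,\de})$ and $L_{H^\de}(\pi^\e)$, using the self-adjointness of the spatial mollifier to identify $L_{H^{\e,\de}}(\pi)$ with $L_{H^\de}(\pi^\e)$, the telescoping identity $L_H(\pi^{\e,\de})=\lan h_a(\rho^{\e,\de}_T)\ran-\lan h_a(\rho^{\e,\de}_0)\ran$ (which the paper itself uses, but only later in the proof of Proposition~\ref{energy}), and an integration by parts in $s$ to tame the boundary-layer correction. Both proofs ultimately hinge on the uniform continuity of $\rho^\e$ from Propositions~\ref{cont} and~\ref{prope}. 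What you gain is an explicit limit value and a transparent telescoping structure; what the paper's route buys is brevity, since it never needs to analyze a boundary layer --- the oddness trick does all the work in one line. Two small points in your write-up worth flagging: your identity $H^\de(T,u)=\int_{-\de}^0\phi^\de(r)H(T+r,u)\,dr+\tfrac12 H(T,u)$ presupposes the constant extension of $H$ rather than the natural extension $h_a'(\rho^{\e,\de})$ using the paper's extension of $\rho$; the two conventions differ by $O(\omega(\de))$ uniformly and lead to the same limit, but you should state which one you use since your Fubini formula $\partial_t H^\de(t,u)=\int_0^T\phi^\de(t-s)\partial_s H(s,u)\,ds$ only holds for the constant extension. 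Also, the sign in your final bracket $\rho^\e(0,u)[-\tfrac12 H(0,u)+\int_0^\de H\phi^\de]$ depends on exactly how you define the boundary correction, but since the bracket vanishes this is harmless.
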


\begin{proof}
Keep in mind that $H = h_a'(\rho^{\e,\de})$ depends on $\e$ and $\de$,
although this does not appears in the notation, and recall that $C_0$
represents a constant independent of $\e$, $\de$ and $a$ which may
change from line to line.
A change of variables shows that
\begin{align*}
L_{H}(\pi^{\e,\de}) 
& \;=\;\lan \rho^{\de}_{T}, H_{T}^{\e} \ran
- \lan\rho_{0}^{\de}, H_{0}^{\e} \ran
- \int_{0}^{T} dt \ \lan \rho^{\de}_{t}, \partial_{t}H_{t}^{\e}\ran  \\
& \;=\; \lan \rho_{T}, H_{T}^{\e,\de} \ran
- \lan\rho_{0}, H_{0}^{\e,\de} \ran
- \int_{0}^{T} dt \ \lan \rho^{\de}_{t}, \partial_{t}H_{t}^{\e}\ran + R_{1}^{\e,\de}\;,
\end{align*}
where
\begin{equation*}
R_{1}^{\e,\de} := R^{\e,\de,T} - R_{0}^{\e,\de,0} \quad\text{and}\quad
R^{\e,\de,t} :=\lan \rho_{t}^{\de}-\rho_{t}, H_{t}^{\e}\ran 
+\lan \rho_{t}, H_{t}^{\e}-H_{t}^{\e,\de}\ran\;
\end{equation*}
for $0\le t\le T$.

From a simple computation it is easy to see that
\begin{equation*}
\int_{0}^{T}dt \  \lan \rho_{t}^{\de}, \partial_{t}H_{t}^{\e}\ran
\;=\; \int_{0}^{T} dt\ \lan\rho_{t}, \partial_{t}H_{t}^{\e,\de} \ran +
R_{2}^{\e,\de}\;, 
\end{equation*}
where $|R_{2}^{\e,\de}|\le C_0\de\|\partial_{t}H^{\e}\|_{\infty}$.  To
conclude the proof, it is enough to show that, for each fixed $\e>0$,
$R_{1}^{\e,\de}$ and $\de\|\partial_{t}H^{\e}\|_{\infty}$ converge to
zero as $\de\downarrow0$.  

Fix $\e>0$.  We first prove that
\begin{equation}
\label{lim4.1}
\lim_{\de\downarrow0}R^{\e,\de,t} \;=\; 0 \quad\text{for}\quad 
t=0\;\text{ and } \;t=T\;.
\end{equation}
We prove this assertion for $t=T$, the argument being similar for
$t=0$. A change of variables shows that
\begin{equation*}
R^{\e,\de,T} \;=\; \lan \rho_{T}^{\e,\de}-\rho_{T}^{\e}, H_{T}\ran 
+\lan \rho^{\e}_{T}, H_{T}-H_{T}^{\de}\ran \;.
\end{equation*}
By Proposition \ref{prope}, $\rho^{\e}(\cdot,u)$ is continuous for any
$u\in\T$.  Therefore, for any $(t,u)\in[0,T]\times\T$,
\begin{equation}
\label{lim4.4}
\begin{gathered}
\lim_{\de\downarrow0}\rho^{\e,\de}(t,u) \;=\; \rho^{\e}(t,u) \; ,  \\
\lim_{\de\downarrow0} H^{\de}(T,u)
\;=\; h_a'(\rho^{\e}(T,u)) \;=\;
\lim_{\de\downarrow0}H(T,u)\;.
\end{gathered}
\end{equation}
Since $h'$ is bounded and continuous on $[0,1]$,
\eqref{lim4.1} is proved by letting $\de\downarrow0$
and by the bounded convergence theorem.

It remains to show that $\de\|\partial_{t}H^{\e}\|_{\infty}$ converges
to $0$ as $\de\downarrow0$.  An elementary computation gives that, for
any $(t,u)\in[0,T]\times\T$,
\begin{equation*}
\partial_{t}H^{\e}(t,u)
\;=\;\int_{\T} dv\ h''(\rho^{\e,\de}(t,u+v))\psi^{\e}(v) 
\int_{-\de}^{\de} dr\ 
\rho^{\e}(t+r,u+v)(\phi^{\de})'(r)\;.
\end{equation*}
Since $\phi^{\de}$ is a symmetric function, a change of variables
shows that
\begin{equation*}
\int_{-\de}^{\de} dr\ 
\rho^{\e}(t+r,u+v)(\phi^{\de})'(r)
\;=\;\int_{-\de}^{0} dr\ 
\{\rho^{\e}(t+r,u+v)-\rho^{\e}(t-r,u+v)\}(\phi^{\de})'(r)\;.
\end{equation*}
By Proposition \ref{prope}, $\rho^{\e}$ is uniformly continuous on
$[-1,T+1]\times \bb T$. On the other hand, $\de\int_{-\de}
^{0}(\phi^{\de})'(r) dr = \phi(0)$.  Therefore, the last expression
multiplied by $\de$ converges to $0$ as $\de\downarrow0$ uniformly in
$(t,u)\in[0,T]\times\T$. Since $h''$ and $\psi^{\e}$ are uniformly
bounded, $\de\|\partial_{t}H^{\e}\|_{\infty}$ converges to $0$ as
$\de\downarrow0$.
\end{proof}

\begin{lemma}
\label{onB}
For any path $\pi(t,du)=\rho(t,u)du$ such that $\mathcal{Q}(\pi)<\infty$ and
for $i=1,2$, 
\begin{equation*}
\lim_{\e\downarrow0}\lim_{\de\downarrow0}
B^{i}_{H^{\e,\de}}(\pi) \;=\; B^{i}_{h'(\rho)}(\pi)\;.
\end{equation*}
Moreover, there exists a positive constant $C_0<\infty$, independent
of $a>0$, such that
\begin{equation}
\label{bound4.4}
\int_{0}^{T} dt \ \int_{\T} du\ \frac{(\nabla\rho(t,u))^{2}}{\chi_{a}(\rho(t,u))}
\;\le\; C_0 \, B^{1}_{h'(\rho)}(\pi)\;,\quad
|B^{2}_{h'(\rho)}(\pi)| \;\le\; C_0\;.
\end{equation}
\end{lemma}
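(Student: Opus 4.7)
\emph{Proof plan.} The statement has two independent pieces: the iterated convergence $B^{i}_{H^{\e,\de}}(\pi)\to B^{i}_{h'(\rho)}(\pi)$, and the a priori bounds \eqref{bound4.4}. The first piece I plan to handle by standard $L^{2}$ mollification arguments, relying on $h'$ and $h''$ being bounded and Lipschitz on $[0,1]$ for fixed $a>0$. The second follows from the identity $h''=(2\chi_{a})^{-1}$ together with $\chi_{a}\ge\chi$, and from the factorizations $B(\rho)=(1-\rho)\tilde B(\rho)$, $D(\rho)=\rho\tilde D(\rho)$ of \eqref{02}.

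First I fix $\e>0$ and send $\de\downarrow 0$. Since $\mathcal{Q}(\pi)<\infty$ forces $\nabla\rho\in L^{2}([0,T]\times\T)$, Proposition~\ref{prope} yields $\nabla\rho^{\e}\in L^{2}$, and time mollification gives $\rho^{\e,\de}\to\rho^{\e}$ and $\nabla\rho^{\e,\de}\to\nabla\rho^{\e}$ in $L^{2}$. Composition with the Lipschitz $h'$, $h''$ transports these to $L^{2}$-convergences of $h'(\rho^{\e,\de})$ and of $h''(\rho^{\e,\de})\nabla\rho^{\e,\de}$. A further space-time mollification is an $L^{2}$-contraction and commutes with the spatial gradient, so
\begin{equation*}
H^{\e,\de}\to (h'(\rho^{\e}))^{\e},\qquad \nabla H^{\e,\de}\to (h''(\rho^{\e})\nabla\rho^{\e})^{\e}\text{ in }L^{2}([0,T]\times\T)\;.
\end{equation*}
Since $\nabla\rho_{t},\chi(\rho_{t})\in L^{2}$ and $\|H^{\e,\de}\|_{\infty}\le\|h'\|_{\infty}$, these $L^{2}$-convergences pass directly into the integrals defining $B^{1}$, while $B^{2}$ is controlled by dominated convergence via the uniform $L^{\infty}$-bound on $e^{\pm H^{\e,\de}}$. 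Letting then $\e\downarrow 0$, Proposition~\ref{prope} gives $\rho^{\e}\to\rho$ and $\nabla\rho^{\e}\to\nabla\rho$ in $L^{2}$; hence $h'(\rho^{\e})\to h'(\rho)$ and $h''(\rho^{\e})\nabla\rho^{\e}\to h''(\rho)\nabla\rho=\nabla h'(\rho)$ in $L^{2}$. The outer mollification $(\cdot)^{\e}$ is an $L^{2}$-contraction converging to the identity, yielding $B^{i}_{(h'(\rho^{\e}))^{\e}}(\pi)\to B^{i}_{h'(\rho)}(\pi)$.

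For the bounds, substituting $\nabla h'(\rho)=\nabla\rho/(2\chi_{a}(\rho))$ into $B^{1}$ gives
\begin{equation*}
B^{1}_{h'(\rho)}(\pi)\;=\;\tfrac{1}{4}\int_{0}^{T}\!\!\int_{\T}\frac{(\nabla\rho)^{2}}{\chi_{a}(\rho)}\,du\,dt\;-\;\tfrac{1}{8}\int_{0}^{T}\!\!\int_{\T}\frac{\chi(\rho)(\nabla\rho)^{2}}{\chi_{a}(\rho)^{2}}\,du\,dt\;.
\end{equation*}
From $\chi_{a}(\rho)=\chi(\rho)+a(1+a)\ge\chi(\rho)$ the subtracted term is at most half of the first, so $B^{1}_{h'(\rho)}(\pi)\ge(1/8)\int\!\!\int(\nabla\rho)^{2}/\chi_{a}(\rho)$ and the first estimate of \eqref{bound4.4} holds with $C_{0}=8$. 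For the second estimate, one computes $h'(\rho)=[2(1+2a)]^{-1}\log[(\rho+a)/(1-\rho+a)]$, so $e^{h'(\rho)}=((\rho+a)/(1-\rho+a))^{\alpha}$ with $\alpha=1/[2(1+2a)]\in(0,1/2]$. For $a\ge 1$, $e^{\pm h'(\rho)}$ is bounded by a numerical constant uniformly in $\rho$; for $a\in(0,1]$ the factor $(1-\rho)$ from $B(\rho)=(1-\rho)\tilde B(\rho)$ absorbs the possible singularity of $(1-\rho+a)^{-\alpha}$ at $\rho=1$, again uniformly in $\rho$ and $a$. The analogous argument using $D(\rho)=\rho\tilde D(\rho)$ handles $D(\rho)e^{-h'(\rho)}$, giving $|B^{2}_{h'(\rho)}(\pi)|\le C_{0}$.

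The delicate step is the convergence, because $\nabla H^{\e,\de}$ arises from composing $h'$ with the double mollification $\rho^{\e,\de}$ and then mollifying again in space and time; the key is to verify commutation of the spatial gradient with mollifiers and to carry the iterated $L^{2}$ limits through cleanly. Concavity of $B$ and $D$ plays no role here; the whole argument rests on the bounded/Lipschitz nature of $h'$, $h''$ on $[0,1]$ together with $\mathcal{Q}(\pi)<\infty$.
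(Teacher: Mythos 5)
Your proposal is correct and follows essentially the same route as the paper: pass $\de\downarrow 0$ and then $\e\downarrow 0$ using Proposition~\ref{prope} together with the boundedness and Lipschitz continuity of $h'_a$, $h''_a$ for fixed $a$, and obtain the bounds from $h''_a=(2\chi_a)^{-1}$, $\chi\le\chi_a$, and the factorizations \eqref{02}. Your phrasing of the $\e\downarrow 0$ step through $L^2$-convergence plus the $L^2$-contraction property of convolution is a slightly cleaner packaging of what the paper does with uniform integrability, convergence in measure, and the Schwarz bound $\|[\cdot]^{\e}\|_2\le\|\cdot\|_2$ for the squared term; and you explicitly carry out the first bound in \eqref{bound4.4}, which the paper leaves to the reader. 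The one place that deserves a line more than ``hence'' is the claim $h''_a(\rho^{\e})\nabla\rho^{\e}\to h''_a(\rho)\nabla\rho$ in $L^2$: after subtracting off $h''_a(\rho^{\e})(\nabla\rho^{\e}-\nabla\rho)$, you are left with $\int (h''_a(\rho^{\e})-h''_a(\rho))^2|\nabla\rho|^2$, which does not follow from Lipschitz composition alone but needs dominated convergence with dominating function $4\|h''_a\|_\infty^2|\nabla\rho|^2\in L^1$ (this is exactly what the paper spells out as ``uniform integrability plus convergence in measure'').
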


\begin{proof}
Throughout this proof, $C(a)$ expresses a constant depending only on $a>0$
which may change from line to line.

Let $\pi(t,du)=\rho(t,u)du$ be a path in $D([0,T], \M_{+,1})$ such that
$Q(\pi)<\infty$.  We first show that
\begin{equation}
\label{lim4.2}
\lim_{\e\downarrow0}\lim_{\de\downarrow0}
B^{1}_{H^{\e,\de}}(\pi) \;=\; B^{1}_{h'(\rho)}(\pi)\;.
\end{equation}

Since $\nabla\rho^{\e}=\rho*\nabla\psi^{\e}$, by Proposition \ref{prope},
$\nabla\rho^{\e}$ is uniformly continuous in $[0,T]\times \T$.
Therefore, for any $(t,u)\in[0,T]\times\T$, we have
\begin{gather*}
\lim_{\de\downarrow0}\nabla\rho^{\e,\de}(t,u)\;=\;\nabla\rho^{\e}(t,u)\;, \\
\lim_{\de\downarrow0}\nabla H^{\e,\de}(t,u)
\;=\;\int_{\T} dv\ \psi^{\e}(v) h_a''(\rho^{\e}(t,u+v))\nabla\rho^{\e}(t,u+v)\;.
\end{gather*}
Hence, by the bounded convergence theorem and a change of variables,
\begin{equation}
\label{01}
\lim_{\de\downarrow0}B^{1}_{H^{\e,\de}}(\pi) \;=\; 
\dfrac{1}{2}\int_{0}^{T} dt\ 
\big\{\lan \nabla \rho^{\e}_{t}, h_a''(\rho^{\e}_{t})\nabla\rho^{\e}_{t}\ran
- \lan\chi(\rho_{t}), ([h_a''(\rho^{\e}_t)\nabla
\rho^{\e}_t]^{\e})^{2} \ran \big\}\;.
\end{equation}

On the one hand, since for any fixed $a>0$ $h_a''$ is bounded, and
since by Proposition \ref{prope}, $\nabla\rho^{\e}$ converges to
$\nabla\rho$ in $L^{2}([0,T]\times\T)$,
\begin{equation*}
\lim_{\e\downarrow0} \int_{0}^{T} dt\ 
\big\lan h_a''(\rho^{\e}_{t}) \, \big[
\nabla\rho^{\e}_{t} - \nabla\rho_{t}\big]^2 \big\ran \;=\; 0\;.
\end{equation*}
As $\rho$ has finite energy and $h_{a}''$ is bounded, the family $\{
h_{a}''(\rho^{\e})[\nabla\rho]^2; \e>0\}$ is uniformly
integrable. Moreover, since $h_{a}''$ is Lipschitz continuous, by
Proposition \ref{prope}, $h_{a}''(\rho^{\e})$ converges to
$h_{a}''(\rho)$ as $\e\downarrow0$ in measure, that is, for any $b>0$,
the Lebesgue measure of the set $\{(t,u)\in[0,T]\times\T;
|h_{a}''(\rho^{\e}(t,u))-h_{a}''(\rho(t,u))|\ge b\}$ converges
to $0$ as $\e\downarrow0$.  Therefore
\begin{equation}\label{lim4.5}
\lim_{\e\downarrow0} \int_{0}^{T} dt\ 
\big\lan h_a''(\rho^{\e}_{t}) \, \big[\nabla\rho_{t}\big]^2 \big\ran
\;=\; \int_{0}^{T} dt\ 
\big\lan h_a''(\rho_{t}) \, \big[\nabla\rho_{t}\big]^2 \big\ran
\;.
\end{equation}
On the other hand, by Schwarz inequality,
\begin{align*}
& \limsup_{\e\downarrow 0}\int_{0}^{T} dt \, \big\lan\chi(\rho_{t})\, 
\big\{ [h_a''(\rho^{\e}_t)\nabla
\rho^{\e}_t - h_a''(\rho_t)\nabla \rho_t]^{\e}  \big\}^{2}
\big\ran \\
& \quad \;\le\; 
\limsup_{\e\downarrow 0}\int_{0}^{T} dt \, \big\lan\chi(\rho_{t})\, 
\big\{ h_a''(\rho^{\e}_t)\nabla
\rho^{\e}_t - h_a''(\rho_t)\nabla \rho_t  \big\}^{2} \big\ran \;. 
\end{align*}
We may now repeat the arguments presented to estimate the first term
on the right hand side of \eqref{01} to show that the last expression
vanishes. 

Since $\chi$ is a bounded function, to complete the proof of
\eqref{lim4.2}, it remains to show that
\begin{equation*}
\limsup_{\e\downarrow 0} \int_{0}^{T} dt \, \big\lan
\big\{ [h_a''(\rho_t)\nabla \rho_t]^{\e}  - h_a''(\rho_t)\nabla \rho_t
\big\}^{2} \big\ran \;=\;0\;.
\end{equation*}
We estimate the previous integral by the sum of two terms, the first
one being
\begin{align*}
& \int_{0}^{T} dt \, \big\lan \big\{ [h_a''(\rho_t)\nabla \rho_t]^{\e}  
- [h_a''(\rho_t)]^\e \nabla \rho_t \big\}^{2} \big\ran \\
&\quad \;\le\; 
C(a) \int_{0}^{T} dt \, \int_{\bb T} dv \, \psi^\e(v) \, 
\big\lan \big\{ \nabla \rho_t (u+v)  
-  \nabla \rho_t (u) \big\}^{2} \big\ran\;,
\end{align*}
where we used Schwarz inequality and the fact that $h_a''$ is
uniformly bounded. This expression vanishes as $\e\to 0$ because
$\nabla \rho$ belongs to $L^2([0,T]\times\bb T)$. The second term in
the decomposition is
\begin{align}\label{lim4.6}
\int_{0}^{T} dt \, \big\lan [\nabla \rho_t]^{2}  \big\{ [h_a''(\rho_t)]^{\e}  
- h_a''(\rho_t) \big\}^{2} \big\ran \;.
\end{align}
By the argument leading to \eqref{lim4.5}, the expression
\eqref{lim4.6} converges to $0$ as $\e\downarrow0$.

We turn to the proof that
\begin{equation}
\label{lim4.3}
\lim_{\e\downarrow0}\varlimsup_{\de\downarrow0}\,
|B^{2}_{H^{\e,\de}}(\pi) - B^{2}_{h'(\rho)}(\pi)| \;=\; 0\;.
\end{equation}
Since $B, D$ and $h'$ are bounded functions, the difference
appearing in the previous formula is less than or equal to
\begin{align*}
& C(a) \Big\{\int_{0}^{T} \, \|e^{H^{\e, \de}_{t}}
-e^{h'(\rho_{t})}\|_{1} \ dt
\;+\;  \int_{0}^{T} \,  
\|e^{-H^{\e, \de}_{t}} -e^{-h'(\rho_{t})}\|_{1} \ dt \Big\} \\
& \le\; C(a) \int_{0}^{T} \,   \,
\|H^{\e, \de}_{t} - h'(\rho_{t})\|_{1}\ dt  \,  \;.
\end{align*}
By Proposition \ref{prope}, $\rho^{\e}$ is uniformly continuous in
$[0,T]\times\T$.  Therefore letting $\delta\to0$, the previous
expression converges to
\begin{align*}
& C(a) \int_{0}^{T} dt\, \, 
\| \, [h'(\rho^\e_{t})]^\e - h'(\rho_{t}) \|_{1} \ dt \, \\
&\quad \le\; C(a) \Big\{ \int_{0}^{T} \ \, \|
[h'(\rho^{\e}_t)]^{\e} - h'(\rho^{\e}_{t}) \|_{1} \, \ dt
+ \int_{0}^{T} \  \, \| h'(\rho^{\e}_{t}) -
h'(\rho_{t}) \|_{1} \  dt \Big\} \, \;.
\end{align*}
Since $h'$ is Lipschitz continuous and $\rho^{\e}$ converges to $\rho$
in $L^{2}([0,T]\times \T)$, the second integral vanishes in the limit
as $\e\downarrow0$.  On the other hand, the first integral is bounded
above by
\begin{align*}
& C(a) \int_{0}^{T} dt\  \int_{\T} dv\ \psi^{\e}(v)
\int_{\T} du\ | \rho_{t}^{\e}(u+v)-\rho_{t}^{\e}(u) |\,
 \\
&\quad \;\le\; C(a)  \int_{0}^{T} dt\  \int_{\T} dv\ \psi^{\e}(v)
\int_{\T} du\ | \rho_{t}(u+v)-\rho_{t}(u) | \;.
\end{align*}
This last integral vanishes in the limit as $\e\downarrow0$
because $\rho$ belongs to $L^{2}([0,T]\times\T)$.

To proof of the first bound in \eqref{bound4.4} is elementary and left
to the reader. To prove the second one, recall from \eqref{02} that
there exist polynomials $\tilde{B},\tilde{D}$ such that
$B(\rho)=(1-\rho)\tilde{B}(\rho)$ and $D(\rho)=\rho\tilde{D}(\rho)$.
From this fact, it is easy to see that the second bound in
\eqref{bound4.4} holds for some finite constant $C_0$, independent of
$a>0$.
\end{proof}

\begin{proof}[Proof of Proposition \ref{energy}]
We may assume, without loss of generality, that $I_{T}(\pi|\ga)$
is finite.  From the variational formula \eqref{var} and Lemma \ref{decom}, 
\begin{align}
\label{bound4.2}
L_{H}(\pi^{\e,\de})
+B^{1}_{H^{\e,\de}}(\pi)-B^{2}_{H^{\e,\de}}(\pi) - R^{\e,\de}
\;\le\; I_{T}(\pi|\ga)\;,
\end{align}
where $H$ stands for the function $h'(\rho^{\e,\de})$.  

Since $\rho^{\e,\de}$ is smooth, an integration by parts
yields the identity
\begin{equation*}
L_H(\pi^{\e,\de})\;=\;
\lan h(\rho^{\e,\de}_{T})\ran
-\lan h(\rho^{\e,\de}_{0})\ran\;.
\end{equation*}
There exists, therefore, a constant $C_0$, independent of $\e$, $\de$
and $a$, such that
\begin{equation*}
|L_H(\pi^{\e,\de})| \;\le\; C_0\;.
\end{equation*}

In \eqref{bound4.2}, let $\de\downarrow0$ and then $\e\downarrow0$. It
follows from the previous bound, and from Lemmas \ref{decom} and
\ref{onB} that
\begin{equation*}
\int_0^{T} dt\ \int_{\T} du\ \frac{|\nabla\rho(t,u)|^{2}}
{\chi_{a}(\rho(t,u))} \;\le\; C_0 \{ I_{T}(\pi|\ga)+1\}\;.
\end{equation*}
It remains to let $a\downarrow0$ and to use Fatou's lemma.
\end{proof}

\begin{corollary}
\label{zero}
The density $\rho$ of a path $\pi(t,du)=\rho(t,u)du$ in $D([0,T],
\M_{+,1})$ is the weak solution of the Cauchy problem \eqref{rdeq}
with initial profile $\ga$ if and only if the rate function
$I_{T}(\pi|\ga)$ is equal to $0$.  Moreover, in that case
\begin{equation}
\label{bound4.7}
\int_{0}^{T} dt\ \int_{\T}du \ 
\dfrac{|\nabla \rho(t,u)|^{2}}{\chi(\rho(t,u))}
< \infty.
\end{equation}
\end{corollary}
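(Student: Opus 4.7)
The plan is to prove both implications from the variational characterization $I_T(\pi|\ga) = \sup_G J_G(\pi)$, by combining an integration-by-parts computation on one side with a first-order perturbation argument on the other.

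For the implication that a weak solution $\rho$ of \eqref{rdeq} with initial profile $\ga$ satisfies $I_T(\pi|\ga) = 0$: weak solutions lie in $L^2([0,T], H^1(\T))$ by the standard parabolic energy estimate recalled in Section~6, so $\mathcal{Q}(\pi) < \infty$ and the variational formula is available. Using the weak form of \eqref{rdeq},
\begin{equation*}
\lan \pi_T, G_T\ran - \lan \ga, G_0\ran - \int_0^T \lan \pi_t, \partial_t G_t + (1/2)\De G_t\ran\, dt \;=\; \int_0^T \lan F(\rho_t), G_t\ran\, dt,
\end{equation*}
and the decomposition $F = B - D$, the defining expression for $\bar J_G(\pi)$ collapses to
\begin{equation*}
\bar J_G(\pi) \;=\; \int_0^T \lan B(\rho_t), G_t + 1 - e^{G_t}\ran\, dt - \int_0^T \lan D(\rho_t), G_t + e^{-G_t} - 1\ran\, dt - \frac{1}{2}\int_0^T \lan \chi(\rho_t), (\nabla G_t)^2\ran\, dt.
\end{equation*}
Each of the three terms is nonpositive because $1 + x - e^x \le 0$ and $x + e^{-x} - 1 \ge 0$ for all $x\in\R$, while $B$, $D$, $\chi \ge 0$ on $[0,1]$. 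Hence $\sup_G \bar J_G(\pi) \le 0$; choosing $G\equiv 0$ attains this bound, so $I_T(\pi|\ga) = 0$.

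Conversely, assume $I_T(\pi|\ga) = 0$. By the very definition of the rate function, this forces $\mathcal{Q}(\pi) < \infty$ and $\pi \in D([0,T], \M_{+,1})$, and Proposition~\ref{cont} gives $\pi(0,du) = \ga(u)\,du$. Fix a test function $G \in C^{1,2}([0,T]\times\T)$ and consider the scalar map $f(\e) := J_{\e G}(\pi)$, $\e \in \R$. It is smooth, satisfies $f(0) = 0$, and is bounded above by $\sup_G J_G(\pi) = 0$, so $\e = 0$ is a global maximum and $f'(0) = 0$. The Taylor expansion $e^{\pm \e G} = 1 \pm \e G + O(\e^2)$ inside $\bar J_{\e G}$ yields
\begin{equation*}
f'(0) \;=\; \lan \pi_T, G_T\ran - \lan \ga, G_0\ran - \int_0^T \lan \pi_t, \partial_t G_t + (1/2)\De G_t\ran\, dt - \int_0^T \lan F(\rho_t), G_t\ran\, dt,
\end{equation*}
and the vanishing of this quantity for every $G$ is exactly the weak formulation of \eqref{rdeq} with initial profile $\ga$. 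The energy bound \eqref{bound4.7} then follows directly from Proposition~\ref{energy} applied with $I_T(\pi|\ga) = 0$.

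The main delicate step is the perturbation argument in the converse, namely justifying that the linear coefficient in $\e$ must vanish. This is clean once one observes that $\bar J_{\e G}$ depends polynomially in $\e$ through the gradient term and bounded-analytically through the two exponentials (with $G$ smooth and bounded), so dominated convergence legitimizes termwise differentiation at $\e = 0$. A minor supplementary concern is that the weak solutions recalled in Section~6 carry the finite-energy property needed to invoke the variational formula in the first implication, which itself comes from the routine test-against-$\rho$ parabolic energy estimate.
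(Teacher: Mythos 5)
Your proof is correct and follows essentially the same two-step strategy as the paper: for the forward direction, substituting the weak formulation of \eqref{rdeq} into $\bar J_G$ to collapse it into the sum of three manifestly nonpositive terms (via $e^x - x - 1 \ge 0$), then invoking Proposition~\ref{energy} for \eqref{bound4.7}; for the converse, observing $J_{\e G}(\pi) \le 0 = J_0(\pi)$ and differentiating at $\e = 0$ to recover the weak formulation. One small inaccuracy: the finite-energy fact $\mathcal{Q}(\pi) < \infty$ for the weak solution (needed so the variational formula defining $I_T$ even applies in the forward direction) is not actually ``recalled in Section~6'' — the appendix states regularity of $\rho$ only on $(0,\infty)\times\T$ and no energy estimate near $t=0$ — though the fact itself is standard and the paper's own proof also leaves this implicit.
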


\begin{proof}
If the density $\rho$ of a path $\pi(t,du)=\rho(t,u)du$ in $D([0,T], \M_{+,1})$
is the weak solution of the Cauchy problem \eqref{rdeq},
then for any $G$ in $C^{1,2}([0,T]\times\T)$ we have
\begin{align*}
J_{G}(\pi) \;=\;
& -\frac{1}{2}\int_{0}^{T} dt\ \lan \chi(\rho_{t}), (\nabla G_{t})^{2}\ran \\
& -\int_{0}^{T}dt\ \{\lan B(\rho_{t}), e^{G_{t}}-G_{t}-1\ran
+\lan D(\rho_{t}), e^{-G_{t}}+G_{t}-1\ran\} \;.
\end{align*}
Since $e^{x}-x-1\ge0$ for any $x$ in $\R$, $I_{T}(\pi|\ga)=0$.  In
addition, the bound \eqref{bound4.7} follows from Proposition
\ref{energy}.

On the other hand, if $I_{T}(\pi|\ga)$ is equal to $0$, then, for any
$G$ in $C^{1,2}([0,T]\times\T)$ and $\e$ in $\R$, we have $J_{\e
  G}(\pi)\le0$. Note that $J_{0}(\pi)$ is equal to $0$.  Hence the
derivative of $J_{\e G}(\pi)$ in $\e$ at $\e=0$ is equal to $0$. This
implies that the density $\rho$ is a weak solution of the Cauchy
problem \eqref{rdeq}.
\end{proof}

\begin{theorem}
\label{lsc}
The function $I_{T}(\cdot|\ga):D([0,T],\mathcal{M}_{+})\to[0,\infty]$
is lower semicontinuous and has compact level sets.
\end{theorem}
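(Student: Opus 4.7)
The plan is to assemble the three ingredients already established in this section—Proposition \ref{cont} (initial condition and time-continuity), Proposition \ref{energy} (energy bound by the rate function), plus the observation that $J_G$ is continuous for the $L^1$ topology and that $\mathcal{Q}$ is lower semicontinuous—into the two assertions. The argument is exactly the one sketched in the introduction, so the proof is structural rather than computational.

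For the lower semicontinuity, I would take a sequence $\pi_n$ converging to $\pi$ in $D([0,T],\mathcal{M}_+)$ and assume, passing to a subsequence, that $\ell := \lim_n I_T(\pi_n|\ga) < \infty$. By Proposition \ref{cont} each $\pi_n$ lies in $C([0,T],\mathcal{M}_{+,1})$ with $\pi_n(0,du)=\ga(u)du$, and by Proposition \ref{energy} one has $\mathcal{Q}(\pi_n) \le C_0(\ell+1)+o(1)$, giving a uniform bound on $\int_0^T\!\!\int_\T |\nabla\rho_n|^2\,du\,dt$. The estimate \eqref{bound4.1} from Proposition \ref{cont} together with compactness of $\mathcal{M}_{+,1}$ yields equicontinuity of $t\mapsto\pi_n(t)$, so Arzelà–Ascoli promotes the Skorohod convergence to uniform convergence in $C([0,T],\mathcal{M}_{+,1})$, and in particular $\pi\in C([0,T],\mathcal{M}_{+,1})$ with $\pi(0,du)=\ga(u)du$. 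Combining the uniform-in-time weak convergence with the bound on $\|\nabla\rho_n\|_{L^2([0,T]\times\T)}$ and a standard Aubin–Lions-type argument, the densities $\rho_n$ converge to $\rho$ in $L^1([0,T]\times\T)$.

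Now for each fixed $G\in C^{1,2}([0,T]\times\T)$, the functional $J_G$ is continuous on $D([0,T],\mathcal{M}_{+,1})$ for the $L^1$ topology on the densities: every term in $\bar J_G$ is either a linear duality pairing of $\rho_t$ with a bounded smooth function or an integral of a uniformly bounded continuous function of $\rho_t$ against a bounded weight (using that $B,D,\chi$ and $e^{\pm G}$ are uniformly bounded). Hence
\[
J_G(\pi) \;=\; \lim_{n\to\infty} J_G(\pi_n) \;\le\; \liminf_{n\to\infty} I_T(\pi_n|\ga) \;=\; \ell.
\]
On the other hand, lower semicontinuity of $\mathcal{Q}$ (noted after \eqref{i03}) gives $\mathcal{Q}(\pi)\le C_0(\ell+1)<\infty$, so $I_T(\pi|\ga) = \sup_G J_G(\pi) \le \ell$, which is the desired inequality.

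For the compact level sets, fix $\ell<\infty$ and consider $K_\ell := \{\pi : I_T(\pi|\ga)\le \ell\}$. By the lower semicontinuity just proved, $K_\ell$ is closed in $D([0,T],\mathcal{M}_+)$. By Proposition \ref{cont}, every $\pi\in K_\ell$ belongs to $C([0,T],\mathcal{M}_{+,1})$ with $\pi(0,du)=\ga(u)du$ and satisfies the uniform time-modulus estimate \eqref{bound4.1} with constant depending only on $\ell$ and on the test function; since $\mathcal{M}_{+,1}$ is compact, $K_\ell$ is relatively compact in $C([0,T],\mathcal{M}_{+,1})$ by Arzelà–Ascoli, hence also in $D([0,T],\mathcal{M}_+)$. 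A closed subset of a relatively compact set is compact, so $K_\ell$ is compact. The main obstacle in the whole argument is the passage from weak convergence plus finite energy to $L^1$ convergence, which is where the Aubin–Lions compactness, combined with the energy bound supplied by Proposition \ref{energy}, is essential; every other step is a routine continuity or compactness assertion.
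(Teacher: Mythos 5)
Your argument is correct and follows the same overall strategy as the paper: use Proposition~\ref{energy} and the lower semicontinuity of $\mathcal Q$ to confine the limit to the finite-energy class, upgrade the weak convergence of the densities to $L^1$ convergence, then exploit the $L^1$-continuity of each $J_G$ and take the supremum; closedness plus relative compactness gives compactness of level sets. The one place where you should be more careful is the phrase ``a standard Aubin--Lions-type argument.'' The classical Aubin--Lions lemma trades a time-derivative bound (in some negative Sobolev space) for strong compactness, and the paper goes out of its way (see Remark 2.8 and the introduction) to point out that precisely such a bound on $\partial_t\rho$ in $H_{-1}$ \emph{does not hold} for reaction--diffusion models, because of the reaction term. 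What actually replaces the time-derivative bound here is the uniform-in-time weak convergence of $\pi_n(t)$ to $\pi(t)$ together with the spatial energy bound; the paper spells this out elementarily by inserting the spatial mollification $\rho^{\e}$, using $\mathcal Q(\pi_n)\le C_0(\ell+1)$ to control $\|\rho_n-\rho_n^\e\|_1$ and $\|\rho-\rho^\e\|_1$ uniformly, and using the weak convergence only through $\rho_n^\e\to\rho^\e$. Your ingredients are the right ones, but you should replace the appeal to Aubin--Lions by the explicit mollification/triangle-inequality argument (or the Fr\'echet--Kolmogorov criterion), both to avoid the appearance of needing a bound that fails in this model and to keep the argument self-contained. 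Two smaller remarks: the paper does not need to first promote Skorokhod convergence to uniform convergence via Arzel\`a--Ascoli for the lower-semicontinuity step (a.e.\ pointwise-in-$t$ weak convergence of $\rho_n^\e(t)$ suffices, since the triangle inequality is integrated in time), and for relative compactness of level sets the paper simply cites \cite{jlv}, whereas your direct derivation from \eqref{bound4.1} and Arzel\`a--Ascoli is a valid and more self-contained alternative.
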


\begin{proof}
For each $q\ge0$, let $E_{q}$ be the level set of
the rate function $I_{T}(\cdot|\ga)$:
\begin{equation*}
E_{q}\;:=\;\{\pi\in D([0,T], \M_{+}) | I_{T}(\pi|\ga)\le q\}\;.
\end{equation*}
Let $\{\pi^{n}:n\ge1\}$ be a sequence in $D([0,T],\M_{+})$ such that
$\pi^{n}$ converges to some element $\pi$ in $D([0,T], \M_{+})$.  We
show that $I_{T}(\pi|\ga)\le\liminf_{n\to\infty} I_{T}(\pi^{n}|\ga)$.
If $\liminf I_{T}(\pi^{n}|\ga)$ is equal to $\infty$, the conclusion
is clear.  Therefore, we may assume that the set
$\{I_{T}(\pi^{n}|\ga): n\ge1\}$ is contained in $E_{q}$ for some
$q>0$.  From the lower semicontinuity of the energy $\mathcal{Q}$ and
Proposition \ref{energy}, we have
\begin{equation*}
\mathcal{Q}(\pi)\;\le\;\varliminf_{n\to\infty}
\mathcal{Q}(\pi^{n}) \;\le\; C(q+1)<\infty\;.
\end{equation*}
Since $\pi^{n}$ belongs to $D([0,T], \M_{+,1})$, so does $\pi$.

Let $\rho$ and $\rho^{n}$ be the density of $\pi$ and $\pi^{n}$
respectively.  We now claim that the sequence $\{\rho^{n} : n\ge1\}$
converges to $\rho$ in $L^{1}([0,T]\times\T)$.  Indeed, by the
triangle inequality,
\begin{equation}
\label{bound4.6}
\begin{split}
& \int_{0}^{T} \  \|\rho_{t} - \rho^{n}_{t}\|_{1}\ dt \\
&\quad  \;\le\; \int_{0}^{T}\ \|\rho_{t} - \rho^{\e}_{t}\|_{1}\ dt +
\int_{0}^{T} \ \|\rho_{t}^{\e} - \rho^{n,\e}_{t}\|_{1}\ dt 
+ \int_{0}^{T} \ \|\rho_{t}^{n,\e} - \rho^{n}_{t}\|_{1}\ dt  \;,  
\end{split}
\end{equation}
where $\rho_{t}^{n,\e}=\rho_{t}^{n}*\psi^{\e}$.
The first term on the right hand side in \eqref{bound4.6} can be computed as
\begin{align*}
\int_{0}^{T}\ \|\rho_{t} - \rho^{\e}_{t}\|_{1}\ dt
& \;\le\; \int_{0}^{T} dt\ \int_{\T} du\ \int_{\T} dv
\ \psi^{\e}(v)|\rho(t,u+v)-\rho(t,u)| \\
& \;\le\; \int_{0}^{T} dt\ \int_{\T} du\ \int_{\T} dv \ \psi^{\e}(v)\int_{u}^{u+v} dw
\ |\nabla \rho(t,w)|\;. \\
\end{align*}
Note that ${\rm supp}\ \psi_{\e} \subset[-\e,\e]$.
From the fundamental inequality $2ab\le A^{-1}a^{2}+Ab^{2}$, for any $A>0$,
the above expression can be bounded above by
\begin{align*}
\dfrac{\mathcal{Q}(\pi)}{2A} +\dfrac{AT\e}{2}\;.
\end{align*}
Similarly, the last term on the right hand side in \eqref{bound4.6}
can be bounded above by
\begin{align*}
\int_{0}^{T}\ \|\rho_{t}^{\e,n} - \rho^{n}_{t}\|_{1}\  dt
\;\le\; \dfrac{\mathcal{Q}(\pi^{n})}{2A} +\dfrac{AT\e}{2}\;.
\end{align*}
Since, for fixed $\e>0$, $\rho^{\e,n}_{t}$ converges to $\rho^{\e}_{t}$ weakly
as $n\to\infty$ for $a.e.\ t\in[0,T]$, 
letting $n\to\infty$ in \eqref{bound4.6} gives that
\begin{equation*}
\varlimsup_{n\to\infty}\int_{0}^{T}\ \|\rho_{t} - \rho^{n}_{t}\|_{1}\ dt \;\le\;
C(q,T)\{\dfrac{1}{A}+ A\e\}\;,
\end{equation*}
for some constant $C(q,T)>0$ which depends on $q$ and $T$.  Optimizing
in $A$ and letting $\e\downarrow0$, we complete the proof of the claim
made above \eqref{bound4.6}.

It follows from this claim that for any function $G$ in
$C^{1,2}([0,T]\times\T)$,
\begin{equation*}
\lim_{n\to\infty}J_{G}(\pi^{n})\;=\;J_{G}(\pi)\;.
\end{equation*}
This limit implies that $I_{T}(\pi|\ga)\le\liminf_{n\to\infty}
I_{T}(\pi^{n}|\ga)$, proving that $I_{T}(\,\cdot\,|\ga)$ is
lower-semicontinuous.

The same argument shows that $E_{q}$ is closed in $D([0,T],\M_{+})$.
Since it is shown in \cite{jlv} that $E_{q}$ is relatively compact in
$D([0,T],\M_{+})$, $E_{q}$ is compact in $D([0,T],\M_{+})$, and the
proof is completed.
\end{proof}

\section{$I_{T}(\cdot|\ga)$-Density}

The lower bound of the large deviations principle stated in Theorem
\ref{mt1} has been established in \cite{jlv} for smooth trajectories.
To remove this restriction, we have to show that any trajectory
$\pi_{t}$, $0\le t \le T$, with finite rate function can be
approximated by a sequence of smooth trajectories $\{\pi^{n}: n\ge1\}$
such that
\begin{align*}
\pi^{n}\longrightarrow\pi
\;\; \text{ and }\;\; 
I_{T}(\pi^{n}|\ga)\longrightarrow I_{T}(\pi|\ga)\;.
\end{align*}
This is the content of this section.  We first introduce some
terminology.

\begin{definition}
Let $A$ be a subset of $D([0,T],\M_{+})$.  $A$ is said to be
$I_{T}(\cdot|\ga)$-dense if for any $\pi$ in $D([0,T],\M_{+})$ such
that $I_{T}(\pi|\ga)<\infty$, there exists a sequence
$\{\pi^{n}:n\ge1\}$ in $A$ such that $\pi^{n}$ converges to $\pi$ in
$D([0,T],\M_{+})$ and $I_{T}(\pi^{n}|\ga)$ converges to
$I_{T}(\pi|\ga)$.
\end{definition}

Let $\Pi$ be the set of all trajectories $\pi(t,du)=\rho(t,u)du$ in
$D([0,T],\M_{+,1})$ whose density $\rho$ is a weak solution of the
Cauchy problem
\begin{equation}
\label{peq}
\begin{cases}
\partial_{t}\rho \;=\; \dfrac{1}{2}\De\rho
-\nabla(\chi(\rho)\nabla H) + B(\rho)e^{H} - 
D(\rho) e^{-H} \ \text{ on }\ \T\;, \\
\rho(0,\cdot) \;=\; \ga(\cdot)\;,
\end{cases}
\end{equation}
for some function $H$ in $C^{1,2}([0,T]\times\T)$.

\begin{theorem}
\label{dense}
Assume that the functions $B$ and $D$ are concave. Then, the set $\Pi$
is $I_{T}(\cdot|\ga)$-dense.
\end{theorem}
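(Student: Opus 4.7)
The plan is to approximate any trajectory $\pi(t,du)=\rho(t,u)du$ with $I_T(\pi|\ga)<\infty$ by a sequence in $\Pi$ via successive regularization steps, controlling the rate function at each stage. The key auxiliary object is the weak solution $\rho^*$ of the unperturbed equation \eqref{rdeq} with initial profile $\ga$: by Corollary~\ref{zero} one has $I_T(\pi^*|\ga)=0$, and parabolic regularity together with $B(0),D(1)>0$ (inherited from the strict positivity of $c$) gives that $\rho^*(t,\cdot)$ is smooth and uniformly bounded in a compact subset of $(0,1)$ on every $[\tau,T]$ with $\tau>0$.

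First I would regularize near the initial time. For $\tau>0$, define $\pi^\tau$ by gluing $\pi^*$ on $[0,\tau]$ to $\pi$ on $[2\tau,T]$ with a smooth interpolation on $[\tau,2\tau]$; using Proposition~\ref{cont} (continuity of $t\mapsto\pi_t$ when $I_T$ is finite) and Theorem~\ref{lsc}, one obtains $\limsup_{\tau\downarrow0}I_T(\pi^\tau|\ga)\le I_T(\pi|\ga)$. Second, to push the density uniformly into $(0,1)$ on all of $[0,T]$, set $\rho^{\tau,\e}=(1-\e)\rho^\tau+\e\rho^*$. The linearity of the leading terms in $J_G$, the concavity of $\chi$, and the concavity of $B$ and $D$ together imply, after a careful splitting of the supremum in $G$ according to the sign of $e^G-1$ and $e^{-G}-1$, that $\limsup_{\e\downarrow0}I_T(\pi^{\tau,\e}|\ga)\le I_T(\pi^\tau|\ga)$. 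Third, mollify $\rho^{\tau,\e}$ in space and time via convolution with $\psi^\si,\phi^\si$ as in Section~4; the density stays bounded away from $0,1$ and becomes smooth, and a computation based on Proposition~\ref{prope} and the continuity of $B,D,\chi$ shows that the rate function is continuous under this operation.

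The resulting trajectory $\pi^{\tau,\e,\si}$ is smooth on $(0,T]\times\T$ with values in a compact subset of $(0,1)$ and has initial profile $\ga$. By the explicit inversion recalled from Lemma~2.1 of \cite{jlv}, such a trajectory is a weak solution of \eqref{peq} for a unique smooth $H$, hence lies in $\Pi$. A diagonal extraction in $\tau,\e,\si$ then produces the desired approximating sequence; the matching inequality $I_T(\pi|\ga)\le\liminf_n I_T(\pi^n|\ga)$ is automatic from Theorem~\ref{lsc}. The main obstacle is the convex-combination step: the weights $e^G-1$ and $e^{-G}-1$ appearing in $J_G$ are not sign-definite, so a direct Jensen argument fails, and the concavity of $B$ and $D$ must be combined with a careful decomposition of the variational problem. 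This is the only place in the proof where the concavity of $B$ and $D$ is essentially used.
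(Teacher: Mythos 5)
Your overall strategy mirrors the paper's: regularize near $t=0$ using the hydrodynamic solution, push the density into $(0,1)$ by a convex combination where the concavity of $B$ and $D$ enters, mollify in space and time, and conclude via the explicit inversion from Lemma~2.1 of \cite{jlv}. You also correctly identify the delicacy in the convex-combination step (the signs of $e^G-1$, $e^{-G}-1$ are not definite, so naive Jensen fails; one must split by sign and use Lipschitz continuity on the favorable-sign part, which is exactly what gives the $O(\e)$ error term in the paper's Lemma~\ref{pi2}). However, the first and third steps of your outline contain genuine gaps.

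In the first step, you glue $\pi^*$ on $[0,\tau]$ to $\pi$ on $[2\tau,T]$ via an unspecified ``smooth interpolation'' between $\rho^*(\tau,\cdot)$ and $\rho(2\tau,\cdot)$. This is problematic for two reasons. A generic interpolation has time derivative of order $\tau^{-1}$, and it is not clear how to control the resulting contribution to the rate function uniformly as $\tau\downarrow 0$; moreover, $\rho(2\tau,\cdot)$ is a profile of a trajectory with merely finite rate function and may touch $\{0,1\}$, so the interpolant need not stay away from the degeneracies of $\chi$. The paper avoids interpolating between two unrelated profiles altogether: it sets $\rho^\de=\lambda$ on $[0,\de]$, $\rho^\de(t,u)=\lambda(2\de-t,u)$ on $[\de,2\de]$ (the time-reversed hydrodynamic solution, which returns to $\gamma$ at $t=2\de$), and $\rho^\de(t,u)=\rho(t-2\de,u)$ on $[2\de,T]$. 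The backward segment's contribution is bounded explicitly by terms involving $\int_0^\de\int\log\chi(\lambda)^{-1}$ and $\int_0^\de\int|\nabla\lambda|^2/\chi(\lambda)$; the first vanishes via comparison of $\lambda$ with the ODE solutions $\lambda^0,\lambda^1$ (using $F(0)>0>F(1)$, Proposition~\ref{mono}), and the second by Corollary~\ref{zero}. Your ``smooth interpolation'' skips precisely the part of the argument that requires these estimates. (Relatedly, your appeal to ``parabolic regularity'' to push $\rho^*$ off the boundary values is not quite the right mechanism; it is the comparison principle with the ODE $\dot x=F(x)$ that does this.)

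In the third step, convolving $\rho^{\tau,\e}$ in space and time with fixed-bandwidth mollifiers $\psi^\si,\phi^\si$ destroys the initial condition: at $t=0$ one obtains $\gamma*\psi^\si\ne\gamma$ for a merely measurable $\gamma$, and by Proposition~\ref{cont} the resulting trajectory then has infinite rate function. This is why the paper introduces a time-modulated mollification with bandwidth $\alpha(t)/n$, where $\alpha$ vanishes on $[0,\de]$: the trajectory is left untouched on the initial interval, where it solves the hydrodynamic equation and thus carries zero cost, and the mollification is turned on only on $[\de,T]$, where the density is already bounded away from $\{0,1\}$ thanks to the previous step. The same modulation is used once more for the temporal convolution in Lemma~\ref{pi4}. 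Without this device, your step~3 does not produce trajectories with $\pi_0=\gamma\,du$.
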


The proof of Theorem \ref{dense} is divided into several steps.
Throughout this section, denote by $\lambda:[0,T]\times\T\to[0,1]$ the
unique weak solution of the Cauchy problem \eqref{rdeq} with initial
profile $\ga$, and assume that the functions $B$ and $D$ are concave.

Let $\Pi_{1}$ be the set of all paths $\pi(t,du)=\rho(t,u)du$ in
$D([0,T],\mathcal{M}_{+,1})$ whose density $\rho$ is a weak solution of
the Cauchy problem \eqref{rdeq} in some time interval $[0,\de]$,
$\de>0$.

\begin{lemma}
\label{pi1}
The set $\Pi_{1}$ is $I_{T}(\cdot|\ga)$-dense.
\end{lemma}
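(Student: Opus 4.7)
The plan is to construct, for each $\de>0$, an approximating path $\pi^{\de}\in\Pi_{1}$ by gluing three pieces: the hydrodynamic solution $\la$ (the unique weak solution of \eqref{rdeq} with initial datum $\ga$) on $[0,\de]$, a straight-line interpolation on a short transition window $[\de,\de+\e]$ with $\e=\e(\de)\downarrow 0$ to be chosen, and the path $\pi$ itself on $[\de+\e,T]$. Explicitly, with $\b(t):=(\de+\e-t)/\e$,
\begin{equation*}
\rho^{\de}(t,u) \;:=\;
\begin{cases}
\la(t,u), & 0\le t\le\de,\\
\b(t)\la(\de,u)+(1-\b(t))\rho(t,u), & \de\le t\le\de+\e,\\
\rho(t,u), & \de+\e\le t\le T,
\end{cases}
\end{equation*}
and $\pi^{\de}(t,du):=\rho^{\de}(t,u)\,du$. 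Then $\rho^{\de}$ is continuous with values in $[0,1]$, coincides with $\la$ on $[0,\de]$, and hence $\pi^{\de}\in\Pi_{1}$. Since by Proposition \ref{cont} the limit path $\pi$ is continuous with $\rho_{0}=\ga=\la_{0}$, one checks that $\rho^{\de}_{t}\to\rho_{t}$ uniformly in $t\in[0,T]$ as $\de+\e\downarrow 0$, so $\pi^{\de}\to\pi$ in $D([0,T],\M_{+})$.

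Write $I_{[a,b]}(\cdot|\mu)$ for the natural analogue of $I_{T}(\cdot|\ga)$ on a subinterval with initial datum $\mu$. Since $J_{G}^{[0,T]}$ splits additively over $[0,\de]\cup[\de,\de+\e]\cup[\de+\e,T]$ for any smooth test function $G$, taking supremum yields
\begin{equation*}
I_{T}(\pi^{\de}|\ga) \;\le\; I_{[0,\de]}(\la|\ga) + I_{[\de,\de+\e]}(\pi^{\de}|\la_{\de}) + I_{[\de+\e,T]}(\pi|\rho_{\de+\e})\;.
\end{equation*}
The first term vanishes by Corollary \ref{zero}. For the last, a standard gluing/mollification of test functions at the join gives the reverse inequality for $\pi$ itself, $I_{T}(\pi|\ga) \ge I_{[0,\de+\e]}(\pi|\ga) + I_{[\de+\e,T]}(\pi|\rho_{\de+\e})$, so by non-negativity $I_{[\de+\e,T]}(\pi|\rho_{\de+\e}) \le I_{T}(\pi|\ga)$. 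Combined with the lower semicontinuity from Theorem \ref{lsc} (which supplies the matching $\liminf$), the convergence $I_{T}(\pi^{\de}|\ga)\to I_{T}(\pi|\ga)$ thus reduces to the single estimate
\begin{equation*}
\lim_{\de\downarrow 0}\,I_{[\de,\de+\e]}(\pi^{\de}|\la_{\de}) \;=\; 0\;.
\end{equation*}

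This interpolation-cost estimate is the main technical obstacle. On the transition window $\partial_{t}\rho^{\de}_{t} = -\e^{-1}(\la_{\de}-\rho_{t})+(1-\b(t))\partial_{t}\rho_{t}$; integrating the $\partial_{t}G$-term of $J_{G}^{[\de,\de+\e]}$ by parts in time produces a linear-in-$G$ functional bounded by $\e^{-1}$ times the $H^{-1}(\T)$ norm of $\la_{\de}-\rho_{t}$, together with remainders of order $\e$ (coming from $\De G$, the quadratic $\chi(\rho^{\de})(\nab G)^{2}$, and the exponential reaction integrals, all with bounded integrand since $\rho^{\de}\in[0,1]$). Dominating the linear piece against the negative quadratic $-(1/2)\int \lan \chi(\rho^{\de}_{t}),(\nab G_{t})^{2}\ran dt$ via a weighted Young inequality gives, after supping in $G$,
\begin{equation*}
I_{[\de,\de+\e]}(\pi^{\de}|\la_{\de}) \;\le\; \frac{C}{\e}\sup_{t\in[\de,\de+\e]}\|\la_{\de}-\rho_{t}\|_{H^{-1}(\T)}^{2} + C\,\e\;.
\end{equation*}
Both $\la_{\de}$ and $\rho_{\de+s}$ ($s\in[0,\e]$) converge to $\ga$ weakly in $L^{2}(\T)$ as $\de+\e\downarrow 0$ (for $\la$ by parabolic regularity, for $\rho$ by the weak continuity of $\pi$), and the embedding $L^{2}(\T)\hookrightarrow H^{-1}(\T)$ is compact on the torus; hence the supremum above tends to $0$. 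Choosing $\e(\de)$ to tend to $0$ slowly enough that $\e(\de)^{-1}$ times this supremum still vanishes then yields the claim. The possible degeneracy of $\chi(\rho^{\de})$ near $\{0,1\}$ is absorbed by a preliminary truncation exploiting the negative exponential terms in $J_{G}$.
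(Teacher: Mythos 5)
Your overall strategy mirrors the paper's: glue the hydrodynamic solution on an initial interval, a short transition segment, and (a copy of) the original path, then bound $I_{T}(\pi^{\de}|\ga)$ by subadditivity of $J_{G}$ over the time partition and use lower semicontinuity of $I_{T}$ for the matching $\liminf$. The decisive difference is the transition path, and this is where your argument has a genuine gap.

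The paper inserts the \emph{time-reversed} hydrodynamic solution $\la(2\de-t,\cdot)$ on $[\de,2\de]$. Because $\la$ solves the forward equation, the transition cost collapses to two explicit integrals: $\tfrac12\int|\nab\la|^2/\chi(\la)$ (from Young against the quadratic $\nab G$ term) and $\int\lan B(\la),1-e^{G}-G\ran+\lan D(\la),1-e^{-G}+G\ran$. Both are controlled because the ODE comparison (Proposition \ref{mono}) traps $\la_{s}$ between $\la^{0}_{s}$ and $\la^{1}_{s}$ with $\dot x=F(x)$, and $F(0)>0$, $F(1)<0$ force $\chi(\la_{s})$ to vanish no faster than $s^{2}$ near $s=0$; this is precisely what makes $\int_{0}^{\de}\int_{\T}\log\chi(\la)\to 0$.

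Your state-linear interpolant $\rho^{\de}_{t}=\b(t)\la_{\de}+(1-\b(t))\rho_{t}$ has no such pointwise lower bound: near $t=\de+\e$, where $\b\approx0$, the interpolant is essentially $\rho_{t}$, and $I_{T}(\pi|\ga)<\infty$ does not prevent $\rho_{t}$ from reaching $0$ or $1$ on a set of positive measure. As a consequence your central estimate
$I_{[\de,\de+\e]}(\pi^{\de}|\la_{\de})\le\tfrac{C}{\e}\sup_{t}\|\la_{\de}-\rho_{t}\|^{2}_{H^{-1}}+C\e$ is not correct as stated. First, the claim that the exponential reaction integrals are ``all with bounded integrand since $\rho^{\de}\in[0,1]$'' and hence $O(\e)$ is false: the supremum is over \emph{all} $G$, and $e^{\pm G}$ is unbounded; what actually controls those terms after optimizing in $G$ is a Legendre-transform quantity that diverges like $\log\bigl(1/B(\rho^{\de})\bigr)$ and $\log\bigl(1/D(\rho^{\de})\bigr)$ as $\rho^{\de}\to\{0,1\}$. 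Second, the quadratic carries the weight $\chi(\rho^{\de})$, so the Young inequality you invoke produces a $\chi^{-1}$-weighted $H^{1}$ norm of the potential, not $\|\la_{\de}-\rho_{t}\|^{2}_{H^{-1}}$ with a uniform constant $C$. Your closing remark that ``the possible degeneracy of $\chi(\rho^{\de})$ near $\{0,1\}$ is absorbed by a preliminary truncation exploiting the negative exponential terms'' is exactly where the hard work would have to happen, and it cannot be waved off: where $\chi(\rho^{\de})$ degenerates, $B(\rho^{\de})$ or $D(\rho^{\de})$ degenerates too (since $B(1)=0$, $D(0)=0$ by \eqref{02}), so the exponential terms offer no help in the problematic regions. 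Without a quantitative lower bound on $\rho^{\de}$ of the type supplied by the reversed-$\la$ construction, the transition-cost estimate is unproved, and the proof is incomplete at its crucial step.
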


\begin{proof}
Fix $\pi(t,du)=\rho(t,u)du$ in $D([0,T],\mathcal{M}_{+,1})$ such that
$I_{T}(\pi|\ga)<\infty$.  For each $\de>0$, set the path
$\pi^{\de}(t,du)=\rho^{\de}(t,u)du$ where
\begin{align*}
\rho^{\de}(t,u) \;=\; 
\begin{cases}
\lambda(t,u) & \text{ if } t\in[0,\de]\;, \\
\lambda(2\de-t,u) & \text{ if } t\in[\de,2\de]\;, \\
\rho(t-2\de,u) & \text{ if } t\in[2\de,T]\;. 
\end{cases}
\end{align*}
It is clear that $\pi^{\de}$ converges to $\pi$ in $D([0,T],\M_{+})$
as $\de \downarrow 0$ and that $\pi^{\de}$ belongs to $\Pi_{1}$.  To
conclude the proof it is enough to show that $I_{T}(\pi^{\de}|\ga)$
converges to $I_{T}(\pi|\ga)$ as $\de \downarrow 0$.

Since the rate function is lower semicontinuous,
$I_{T}(\pi|\ga)\le\liminf_{\de\to0}I_{T}(\pi^{\de}|\ga)$.  Note that
$\mathcal{Q}(\pi^{\de})\le 2\mathcal{Q} (\lambda)+\mathcal{Q}(\pi)$.
From Corollary \ref{zero}, we have $\mathcal{Q}(\pi^{\de})<\infty$.
To prove the upper bound $\limsup_{\de\to0}I_{T}(\pi^{\de}|\ga)\le
I_{T}(\pi|\ga)$, we now decompose the rate function
$I_{T}(\pi^{\de}|\ga)$ into the sum of the contributions on each time
interval $[0,\de]$, $[\de, 2\de]$ and $[2\de, T]$.  The first
contribution is equal to $0$ since the density $\rho^{\de}$ is a weak
solution of the equation \eqref{rdeq} on this interval.  The third
contribution is bounded above by $I_{T}(\pi|\ga)$ since $\pi^{\de}$ on
this interval is a time translation of the path $\pi$.

On the time interval $[\de, 2\de]$, the density $\rho^{\de}$ solves
the backward reaction-diffusion equation:
$\partial_{t}\rho^{\de}=-(1/2)\De\rho^{\de}-F(\rho^{\de})$.
Therefore, the second contribution can be written as
\begin{equation*}
\begin{split}
\sup_{G\in C^{1,2}([0,T]\times\T)}
\Big\{&\int_0^{\de} dt\ \big\{\lan\nabla\lambda_{t},\nabla G_{t}\ran
-\frac{1}{2}\lan \chi(\lambda_{t}), (\nabla G_{t})^{2}\ran \big\} \\
+& \int_{0}^{\de}dt\ \big\{\lan B(\lam_{t}), 1-e^{G_{t}}-G_{t}\ran+
\lan D(\lam_{t}), 1-e^{-G_{t}}+G_{t}\ran \big\} \Big\}\;.
\end{split}
\end{equation*}
By Schwarz inequality, the first integral inside the supremum
is bounded above by
\begin{equation}
\label{exp5.1}
\dfrac{1}{2}
\int_{0}^{\de} dt\ \int_{\T} du\ \frac{|\nabla\lam(t,u)|^{2}}{\chi(\lam(t,u))}\;.
\end{equation}
On the other hand, taking advantage of the relation \eqref{02} and of
the fact that $B$ and $D$ are bounded functions, a simple computation
shows that the second integral inside the supremum in the penultimate
displayed equation is bounded above by
\begin{equation*}
C\int_{0}^{\de} dt\ \int_{\T} du\ \log \frac 1 {\chi(\lam(t,u))}  + C\de\;,
\end{equation*}
for some finite constant $C$ independent of $\de$. By Corollary
\ref{zero}, the expression \eqref{exp5.1} converges to $0$ as
$\de\downarrow0$. Hence, to conclude the proof it suffices to show
that
\begin{equation}
\label{lim1}
\lim_{\de\downarrow0}\int_{0}^{\de} dt\ \int_{\T} du\ \log{\chi(\lam(t,u))}\;=\;0\;.
\end{equation}

Let $\lam_{t}^{j}:[0,T]\to\R$, $j=0,1$, be the unique
solution of the ordinary differential equation
\begin{equation}\label{ode3}
\frac{d}{dt}\lam_{t}^{j} \;=\; F(\lam_{t}^{j})\;,
\end{equation}
with initial condition $\lam_{0}^{j} = j$ and set
$\lambda^j(t,u) \equiv \la_t^j$ for $(t,u)\in[0,T]\times\T$.
Since $\la^j$ is constant in spatial variable, $\De\la^j =0$.
Therefore it follows from \eqref{ode3} that
$\la^j$ is a unique weak solution of the Cauchy problem
\eqref{rdeq} with initial profile $\lam_{0}^{j}(u)\equiv j$.
By Proposition \ref{mono}, 
\begin{equation}
\label{ineq5.1}
\lam^0_{t}\;\le\;\lam(t,u)\ \text{ and }\ 1-\lam^{1}_{t}\;\le\; 1-\lam(t,u)\;,
\end{equation}
for any $(t,u)\in[0,T]\times\T$.
Since $F(1)<0<F(0)$, an elementary computation shows that
\begin{equation}\label{lim5.2}
\lim_{\de\downarrow0}\int_{0}^{\de} dt\ \log{\lam_{t}^{0}} \;=\; 0
\;\; \text{ and }\;\;
\lim_{\de\downarrow0}\int_{0}^{\de} dt\ \log{(1-\lam_{t}^{1})} \;=\; 0\;.
\end{equation}
By definition of  $\chi$ and by \eqref{ineq5.1}, 
\begin{equation*}
\log{\chi(\lam(t,u))} 
\;=\; \log{\lam(t,u)} + \log{(1-\lam(t,u))} 
\ge \log{\lam_{t}^{0}} + \log{(1-\lam_{t}^{1})}\;.
\end{equation*}
To conclude the proof of \eqref{lim1}, it remains to recall
\eqref{lim5.2}.
\end{proof}

Let $\Pi_{2}$ be the set of all paths $\pi(t,du)=\rho(t,u)du $ in
$\Pi_{1}$ with the property that for every $\de>0$ there exists $\e>0$
such that $\e\le\rho(t,u)\le1-\e$ for all $(t,u)\in[\de,T]\times\T$.

\begin{lemma}
\label{pi2}
The set $\Pi_{2}$ is $I_{T}(\cdot|\ga)$-dense.
\end{lemma}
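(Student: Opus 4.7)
The approach is to push $\pi$ away from the boundaries $\{0,1\}$ by a convex combination with the hydrodynamic solution $\lambda$. Given $\pi\in\Pi_{1}$ with density $\rho$, I would define
\[
\rho^{\epsilon}(t,u) \;:=\; (1-\epsilon)\rho(t,u) + \epsilon\lambda(t,u)
\]
and show that the associated path $\pi^{\epsilon}$ lies in $\Pi_{2}$, converges to $\pi$, and satisfies $I_{T}(\pi^{\epsilon}|\gamma)\to I_{T}(\pi|\gamma)$. Membership in $\Pi_{1}$ is automatic: since $\rho\equiv\lambda$ on some interval $[0,\delta_{0}]$, the same holds for $\rho^{\epsilon}$. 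The strict positivity of $c$ forces $F(0)>0$ and $F(1)<0$, so the ODE comparison used in Lemma \ref{pi1} yields $\lam^{0}_{t}\le\lambda(t,u)\le\lam^{1}_{t}$ with $\lam^{0}_{t}>0$ and $\lam^{1}_{t}<1$ for $t>0$; therefore on $[\de,T]\times\T$ one has $\rho^{\epsilon}\ge\epsilon\lam^{0}_{\de}$ and $1-\rho^{\epsilon}\ge\epsilon(1-\lam^{1}_{\de})$, giving $\pi^{\epsilon}\in\Pi_{2}$. Convergence $\pi^{\epsilon}\to\pi$ is immediate from $\|\rho^{\epsilon}_{t}-\rho_{t}\|_{\infty}\le\epsilon$.

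The lower bound $I_{T}(\pi|\ga)\le\varliminf_{\epsilon}I_{T}(\pi^{\epsilon}|\ga)$ follows from Theorem \ref{lsc}. For the upper bound, I would exploit the concavity of $\chi$, $B$, and $D$ by introducing the Jensen defects
\[
\theta_{\chi}:=\chi(\rho^{\epsilon})-(1-\epsilon)\chi(\rho)-\epsilon\chi(\lambda),\;\; \theta_{B}:=B(\rho^{\epsilon})-(1-\epsilon)B(\rho)-\epsilon B(\lambda),\;\; \theta_{D}:=D(\rho^{\epsilon})-(1-\epsilon)D(\rho)-\epsilon D(\lambda),
\]
all nonnegative by concavity, and each bounded pointwise by $C_{0}\epsilon$ via a Taylor expansion together with the boundedness of the second derivatives of $\chi$, $B$, $D$ on $[0,1]$. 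Since the linear-in-$\rho$ part of $J_{G}$ splits exactly as a convex combination, one obtains, for every $G\in C^{1,2}([0,T]\times\T)$,
\[
J_{G}(\pi^{\epsilon}) \;=\; (1-\epsilon)J_{G}(\pi) + \epsilon J_{G}(\pi^{\lambda}) - \tfrac{1}{2}\int\theta_{\chi}(\nabla G)^{2} - \int\theta_{B}(e^{G}-1) - \int\theta_{D}(e^{-G}-1),
\]
where $\pi^{\lambda}(t,du):=\lambda(t,u)\,du$ and $I_{T}(\pi^{\lambda}|\ga)=0$ by Corollary \ref{zero}.

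The main obstacle is the sign-dependent behavior of $e^{\pm G}-1$, which blocks a direct convexity argument on $J_{G}$. The key observation is that whenever concavity pushes the error in the wrong direction, the relevant exponential factor is bounded by $1$: the $\theta_{\chi}$-integral is trivially nonpositive; on $\{G\ge 0\}$ the integrand $\theta_{B}(e^{G}-1)$ is nonnegative so its contribution to $J_{G}(\pi^{\epsilon})$ is nonpositive, while on $\{G\le 0\}$ one has $|e^{G}-1|=1-e^{G}\le 1$, giving the $G$-uniform bound $\theta_{B}|e^{G}-1|\le\theta_{B}\le C_{0}\epsilon$; the symmetric analysis handles $\theta_{D}$. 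Integrating over $[0,T]\times\T$ yields $J_{G}(\pi^{\epsilon})\le(1-\epsilon)J_{G}(\pi)+\epsilon J_{G}(\pi^{\lambda})+C\epsilon$ \emph{uniformly in $G$}, and taking the supremum together with subadditivity of $\sup$ gives $I_{T}(\pi^{\epsilon}|\ga)\le(1-\epsilon)I_{T}(\pi|\ga)+C\epsilon$, completing the proof.
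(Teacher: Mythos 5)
Your proof is correct and follows the paper's strategy exactly: the convex combination $\rho^{\epsilon}=(1-\epsilon)\rho+\epsilon\lambda$, comparison with $\lambda^{0},\lambda^{1}$ (Proposition \ref{mono}) to place $\pi^{\epsilon}$ in $\Pi_{2}$, lower semicontinuity of $I_{T}(\cdot|\gamma)$ for one inequality, and a concavity-based decomposition of $J_{G}$ for the other. Your explicit sign-case analysis of $e^{\pm G}-1$ spells out what the paper compresses into the remark that $B$, $D$, $\chi$ are concave and Lipschitz continuous, and the only step you omit is the routine check, via convexity of $\mathcal{Q}$, that $\mathcal{Q}(\pi^{\epsilon})<\infty$, which is needed so that $I_{T}(\pi^{\epsilon}|\gamma)=\sup_{G}J_{G}(\pi^{\epsilon})$.
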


\begin{proof}
Fix $\pi(t,du)=\rho(t,u)du$ in $\Pi_{1}$ such that
$I_{T}(\pi|\ga)<\infty$.  For each $\e>0$, set the path
$\pi^{\e}(t,du)=\rho^{\e}(t,u)du$ with $\rho^{\e}=(1-\e)\rho+\e\lam$.
It is clear that $\pi^{\e}$ converges to $\pi$ in $D([0,T],\M_{+})$ as
$\e \downarrow 0$.  Let $\lambda^{j}(t,u)\equiv\lam_{t}^{j}$, $j=0,1$,
be the weak solution of the equation \eqref{rdeq} with initial profile
$\lam_{0}^{j}(u)\equiv j$. By Proposition \ref{mono}, $\e\lam^{0}
\le\rho^{\e} \le(1-\e)+ \e\lam^{1}$.
Moreover it is easy to see that $\lam^{j}$, $j=1,2$,
belongs to the set $\Pi_{2}$ since $\lam^{j}$ solves
the ordinary differential equation
\begin{equation*}
\frac{d}{dt}\lam_{t}^{j} \;=\; F(\lam_{t}^{j})\;,
\end{equation*}
and $F(1)<0<F(0)$.
Therefore $\pi^{\e}$ belongs to
$\Pi_{2}$.  To conclude the proof it is enough to show that
$I_{T}(\pi^{\e}|\ga)$ converges to $I_{T}(\pi|\ga)$ as $\e \downarrow
0$.

Since the rate function is lower semicontinuous,
$I_{T}(\pi|\ga)\le\liminf_{\e\downarrow0}I_{T}(\pi^{\e}|\ga)$. By the
convexity of the energy, $\mathcal{Q}(\pi^{\e})\le \e\mathcal{Q}
(\lambda)+(1-\e)\mathcal{Q}(\pi)$, hence
$\mathcal{Q}(\pi^{\e})<\infty$.  Let $G$ be a function in
$C^{1,2}([0,T]\times\T)$.  Since $B,D$ and $\chi$ are concave and
Lipschitz continuous,
\begin{align*}
J_{G}(\pi^{\e}) \;\le\;(1-\e)J_{G}(\pi)+\e J_{G}(\lam) 
+ C_0 \{\e+\int_{0}^{T}\ \|\rho_{t}^{\e}-\rho_{t}\|_{1}\ dt\}
\end{align*}
for some finite constant $C_0$, which may change from
line to line. Therefore,
\begin{equation*}
I_{T}(\pi^{\e}|\ga)\;\le\;(1-\e)I_{T}(\pi|\ga) + \e I_{T}(\lambda|\ga)
+ C_0 T \e\;.
\end{equation*}
Letting $\e\downarrow0$ gives $\limsup_{\e\downarrow0}
I_{T}(\pi^{\e}|\ga) \le I_{T}(\pi|\ga)$, which completes the proof.
\end{proof}

Let $\Pi_{3}$ be the set of all paths $\pi(t,du)=\rho(t,u)du$ in
$\Pi_{2}$ whose density $\rho(t,\cdot)$ belongs to the space
$C^{\infty}(\T)$ for any $t\in(0,T]$.

\begin{lemma}
\label{pi3}
The set $\Pi_{3}$ is $I_{T}(\cdot|\ga)$-dense.
\end{lemma}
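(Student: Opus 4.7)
The plan is to smooth the density $\rho$ of $\pi\in\Pi_2$ in the spatial variable by convolution with $\psi^\e$, while gluing the result to $\lam$ on a shrinking initial layer so as to preserve both the initial datum $\ga$ and membership in $\Pi_1$. Concretely, fix $\pi(t,du)=\rho(t,u)du$ in $\Pi_2$ with $I_T(\pi|\ga)<\infty$, and let $\de_0>0$ be such that $\rho=\lam$ on $[0,\de_0]$. For $0<\e<\de_0/2$, choose a smooth cutoff $\te_\e:[0,T]\to[0,1]$ with $\te_\e\equiv 0$ on $[0,\e]$ and $\te_\e\equiv 1$ on $[2\e,T]$, and define
\begin{equation*}
\rho^\e(t,u) \;:=\; \big(1-\te_\e(t)\big)\,\lam(t,u)\;+\;\te_\e(t)\,(\rho_t*\psi^\e)(u),
\qquad \pi^\e(t,du):=\rho^\e(t,u)\,du.
\end{equation*}

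On $[0,\e]$ one has $\rho^\e=\lam$, which is a weak solution of the Cauchy problem, so $\pi^\e\in\Pi_1$. Parabolic regularity forces $\lam(t,\cdot)\in C^\infty(\T)$ for every $t>0$, and convolution with $\psi^\e$ produces a smooth function in $u$; hence $\rho^\e(t,\cdot)\in C^\infty(\T)$ for every $t\in(0,T]$. To verify the $\Pi_2$ property, combine the hypothesis $\e_1\le\rho\le 1-\e_1$ on $[\de,T]\times\T$ (guaranteed by $\pi\in\Pi_2$) with the bounds $\lam^0_t\le\lam(t,u)\le\lam^1_t$ of Proposition \ref{mono}, where the spatially constant ODE solutions $\lam^0_t,\lam^1_t$ are strictly in $(0,1)$ for $t>0$; both bounds are preserved under convex combination and under spatial convolution. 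Thus $\pi^\e\in\Pi_3$. The convergence $\pi^\e\to\pi$ in $D([0,T],\M_+)$ is routine: on $[2\e,T]$ one has $\rho_t*\psi^\e\to\rho_t$ in $L^1(\T)$ for each $t$, while the initial layer $[0,2\e]$ shrinks and the densities there remain uniformly bounded.

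The heart of the argument, and its main obstacle, is the rate-function convergence $I_T(\pi^\e|\ga)\to I_T(\pi|\ga)$. The lower bound $I_T(\pi|\ga)\le\varliminf_{\e\downarrow 0}I_T(\pi^\e|\ga)$ is immediate from Theorem \ref{lsc}. To obtain the complementary bound
\begin{equation*}
\varlimsup_{\e\downarrow 0} I_T(\pi^\e|\ga) \;\le\; I_T(\pi|\ga)\;,
\end{equation*}
I would mimic the strategy of Lemma \ref{pi2}: for each test function $G\in C^{1,2}([0,T]\times\T)$, split $J_G(\pi^\e)$ into a bulk part on $[2\e,T]$ and a boundary-layer part on $[0,2\e]$. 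On the bulk, on which $\rho^\e=\rho_t*\psi^\e$, use the self-adjointness identity $\lan f*\psi^\e,g\ran=\lan f,g*\psi^\e\ran$ to transfer the mollifier from $\rho$ onto $G$ in the linear terms; for the nonlinear terms, use the Jensen inequalities $\chi(\rho_t*\psi^\e)\ge\chi(\rho_t)*\psi^\e$ (and the analogues for $B$ and $D$), which are available precisely because of the concavity hypothesis on $B$ and $D$. The resulting Jensen gap is controlled in $L^1$ by $C_0\|\rho^\e_t-\rho_t\|_1$ via the Lipschitz continuity of $B$, $D$, $\chi$, so after a standard approximation of the mollifier's action on $G$ one obtains $J_G(\pi^\e)\le J_G(\pi)+o_\e(1)$. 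The boundary-layer contribution on $[0,2\e]$ is handled as in Lemma \ref{pi1}: the finiteness of the spatial integral of $|\nabla\lam|^2/\chi(\lam)$ given by Corollary \ref{zero}, together with the uniform bounds on $B$ and $D$, ensures that this contribution is $o_\e(1)$. Maximising over $G$ then yields the desired inequality; as in Lemma \ref{pi2}, the concavity assumption enters in an essential way in converting the pointwise Jensen inequality into a usable bound on $J_G$.
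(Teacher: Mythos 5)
Your construction is natural, but it differs from the paper's in two ways that both create real trouble, and your sketch does not address either.

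First, you place the transition layer on $[\e,2\e]$, which shrinks to the origin. On that interval the time derivative of $\rho^{\e}$ picks up the extra term $\te_{\e}'(t)\,[(\rho_t*\psi^{\e})(u)-\lam(t,u)]$, which is of order $\te_{\e}'=O(1/\e)$ times a mollification gap. This term is not a divergence, so it cannot be absorbed by the quadratic penalty $-\frac12\lan\chi(\rho^{\e}),(\nabla G)^{2}\ran$; it must be handled through the Legendre transform of the reaction part, where the bound scales like $\int_{\e}^{2\e}\lan (r^{\e})^{2}/(B(\rho^{\e})+D(\rho^{\e}))\ran\,dt$. But on $[\e,2\e]$, the density $\lam$ can approach $0$ or $1$ (the initial profile $\ga$ may take these values), so $B(\rho^{\e})+D(\rho^{\e})$ is only bounded below by a quantity of order $\e$. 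Your only control on $r^{\e}$ is $\|r^{\e}\|^{2}_{L^{2}([\e,2\e]\times\T)}\lesssim \int_{\e}^{2\e}\|\nabla\lam_t\|_{2}^{2}\,dt$, which tends to zero but is not known to be $o(\e)$; dividing by the degenerating $B+D$ therefore does not yield a vanishing contribution. Your sentence "handled as in Lemma \ref{pi1}" does not cover this: Lemma \ref{pi1} has no cutoff-derivative term at all, because there the transition layer is a time-reversed solution of \eqref{rdeq}, a completely different mechanism.

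Second, the paper avoids precisely this trap by (a) keeping the transition layer at a \emph{fixed} interval $[\de,2\de]$, where the $\Pi_{2}$ property guarantees $\rho$ is bounded away from $0$ and $1$ so $\chi$, $B$, $D$ are bounded below, and (b) mollifying with the \emph{heat kernel} at the time-dependent scale $\frac{1}{n}\a(t)$. The heat-equation identity $\partial_t\psi=\frac12\De\psi$ turns the time-derivative of the mollification scale into the divergence-form term $-\frac{\a'(t)}{2n}\lan\nabla\rho^{n}_t,\nabla G_t\ran$, which is paired with $\nabla G$ and is therefore controlled by Schwarz against the quadratic gradient penalty, giving an $O(1/n^{2})$ bound proportional to the finite energy. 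With a fixed cutoff and a fixed mollifier as in your proposal, this algebraic cancellation is absent. If you want to rescue a convex-combination construction, you would at least need to keep the transition interval fixed at a scale where $\rho$ is uniformly bounded away from $\{0,1\}$, and then quantify the cutoff-derivative contribution explicitly; as written, the boundary-layer estimate is a genuine gap.
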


\begin{proof}
Fix $\pi(t,du)=\rho(t,u)du$ in $\Pi_{2}$ such that
$I_{T}(\pi|\ga)<\infty$.  Since $\pi$ belongs to the set $\Pi_{1}$, we
may assume that the density solves the equation \eqref{rdeq} in some
time interval $[0,2\de]$, $\de>0$.  Take a smooth nondecreasing
function $\a:[0,T]\to[0,1]$ with the following properties:
\begin{align*}
\begin{cases}
\a(t)\;=\;0\    &\text{ if }\ t\in[0, \de]\;, \\
0\;<\a(t)\;<1\  &\text{ if }\ t\in (\de,2\de)\;, \\
\a(t)\;=\;1\    &\text{ if }\ t\in[2\de, T]\;.
\end{cases}
\end{align*}
Let $\psi(t,u):(0,\infty)\times\T\to(0,\infty)$ be the transition
probability density of the Brownian motion on $\T$ at time $t$
starting from $0$.  For each $n\in\N$, denote by $\psi^{n}$ the
function
\begin{equation*}
\psi^{n}(t,u) \;:=\; \psi(\frac{1}{n}\a(t) ,u)
\end{equation*}
and define the path $\pi^{n}(t,du)=\rho^{n}(t,u)du$ where
\begin{align*}
\rho^{n}(t,u) \;=\;
\begin{cases}
\rho(t,u)\  &\text{ if }\ t\in[0, \de]\;, \\
(\rho_{t}*\psi^{n}_{t})(u) \;=\; \int_{\T} dv\ \rho(t,v)\psi^{n}(t,u-v)
\  &\text{ if }\ t\in(\de, T]\;.
\end{cases}
\end{align*}
It is clear that $\pi^{n}$ converges to $\pi$ in $D([0,T],\M_{+})$ as
$n\to\infty$.  Since the density $\rho^{n}$ is a weak solution to the
Cauchy problem \eqref{rdeq} in time interval $[0,\de]$, by Proposition
\ref{reg}, $\rho^{n}(t,\cdot)$ belongs to the space $C^{\infty}(\T)$
for $t\in(0,\de]$.  On the other hand, by the definition of
$\rho^{n}$, it is clear that $\rho^{n}(t,\cdot)$ belongs to the space
$C^{\infty}(\T)$ for $t\in (\de,T]$.  Therefore $\pi^{n}$ belongs to
$\Pi_{3}$.  To conclude the proof it is enough to show that
$I_{T}(\pi^{n}|\ga)$ converges to $I_{T}(\pi|\ga)$ as $n\to\infty$.

Since the rate function is lower semicontinuous,
$I_{T}(\pi|\ga)\le\liminf_{n\to\infty}I_{T}(\pi^{\e}|\ga)$.  Note that
the generalized derivative of $\rho^{n}$ is given by
\begin{align*}
\nabla\rho^{n}(t,u) \;=\; 
\begin{cases}
\nabla\rho(t,u)\  &\text{ if }\ t\in[0, \de]\;, \\
(\nabla\rho_{t}*\psi^{n}_{t})(u)
\  &\text{ if }\ t\in(\de, T]\;.
\end{cases}
\end{align*}
Therefore, by Schwarz inequality, $\mathcal{Q}(\pi^{n}) \le\mathcal{Q}
(\pi)<\infty$.  

The strategy of the proof of the upper bound is similar to the one of
Lemma \ref{pi1}.  We decompose the rate function $I_{T}(\pi^{n}|\ga)$
into the sum of the contributions on each time interval $[0,\de]$,
$[\de, 2\de]$ and $[2\de, T]$.  The first contribution is equal to $0$
since the density $\rho^{n}$ is a weak solution of the Cauchy problem
\eqref{rdeq} on this interval. Since $\pi^{n}$ is defined as a spatial
average of $\pi$, and since the functions $B$ and $D$ are concave,
similar arguments to the ones presented in the proof of Lemma
\ref{pi2} yield that the third contribution is bounded above by
$I_{T}(\pi|\ga)+o_{n}(1)$.  Hence it suffices to show that the second
contribution converges to $0$ as $n\to\infty$.

Since $\partial_{t}\psi = (1/2) \De\psi$, an integration by parts
yields that in the time interval $(\de, 2\de)$,
\begin{equation*}
\partial_{t}\rho^{n} \;=\; \partial_{t}\rho*\psi^{n} +
\frac{\a'(t)}{2n}\De\rho*\psi^{n}\;.
\end{equation*}
Thus, since in the time interval $[\delta, 2\delta]$ $\rho$ is a weak
solution of the hydrodynamic equation \eqref{rdeq}, for any function
$G$ in $C^{1,2}([0,T]\times\T)$,
\begin{align*}
\lan\rho_{2\de}^{n}, G_{2\de}\ran & - \lan\rho_{\de}^{n}, G_{\de}\ran
- \int_{\de}^{2\de} dt\ \lan\rho_{t}^{n}, \partial_{t}G_{t}\ran \\
& =\; \int_{\de}^{2\de} dt\ \Big \{\lan\rho_{t}^{n}, \frac{1}{2}\De G_{t}\ran
-\frac{\a'(t)}{2n}\lan\nabla\rho_{t}^{n}, \nabla G_{t}\ran
+\lan F_{t}^{n}, G_{t}\ran \Big\}\;,
\end{align*}
where $F^{n}_{t} = F(\rho_{t})*\psi^{n}_{t}$.  Therefore, the 
contribution to $I_{T}(\pi|\ga)$ of the piece of the trajectory in the
time interval $[\delta,2\delta]$ can be written as
\begin{equation}
\label{sup5.2}
\begin{split}
\sup_{G\in C^{1,2}([0,T]\times\T)}
\Big\{&\int_{\de}^{2\de} dt\ \Big( -\frac{\a'(t)}{2n}
\lan\nabla\rho_{t}^{n},\nabla G_{t}\ran
-\frac{1}{2}\lan \chi(\rho^{n}_{t}), (\nabla G_{t})^{2}\ran \Big) \\
+& \int_{\de}^{2\de}dt\ \lan F_{t}^{n}G_{t} - B(\rho_{t}^{n})(e^{G_{t}}-1)
-D(\rho_{t}^{n})(e^{-G_{t}}-1)\ran \Big\}\;.   
\end{split}
\end{equation}

By Schwarz inequality, the first integral inside the supremum
is bounded above by
\begin{equation*}
\frac{\|\a'\|^{2}_{\infty}}{8n^{2}}\int_{\de}^{2\de} dt\ 
\int_{\T}du\ \frac{|\nabla\rho^{n}(t,u)|^{2}}{\chi(\rho^{n}(t,u))}\;.
\end{equation*}
Since $\pi$ belongs to $\Pi_{2}$, there exists a positive constant
$C(\de)$, depending only on $\de$, such that
$C(\de)\le\rho^{n}\le1-C(\de)$ on time interval $[\de, 2\de]$.  This
bounds together with the fact that
$\mathcal{Q}(\pi^{n})\le\mathcal{Q}(\pi)$ permit to prove that the
previous expression converges to $0$ as $n\to\infty$.  On the other
hand, the second integral inside the supremum \eqref{sup5.2} is
bounded above by
\begin{equation}
\label{int5.1}
\int_{\de}^{2\de}dt\ \lan F_{t}^{n}m_{t}^{n} 
- B(\rho_{t}^{n})(e^{m_{t}^{n}}-1)
-D(\rho_{t}^{n})(e^{-m_{t}^{n}}-1)\ran\;,
\end{equation}
where
\begin{equation*}
m_{t}^{n}\;=\;\log{\frac{F_{t}^{n}+\sqrt{(F_{t}^{n})^{2}
+4B(\rho_{t}^{n})D(\rho_{t}^{n})}} {2B(\rho_{t}^{n})}}\;.
\end{equation*}
Note that $m_{t}^{n}$ is well-defined and that the integrand in
\eqref{int5.1} is uniformly bounded in $n$ because in the time
interval $[\de, 2\de]$ $\rho_{t}$ is bounded below by a strictly
positive constant and bounded above by a constant strictly smaller
than $1$.  Since $m^{n}(t,u)$ converges to $0$ as $n\to\infty$ for any
$(t,u)\in[\de,2\de]\times\T$, the expression in \eqref{int5.1}
converges to $0$ as $n\to\infty$.
\end{proof}

Let $\Pi_{4}$ be the set of all paths $\pi(t,du)=\rho(t,u)du$ in
$\Pi_{3}$ whose density $\rho$ belongs to $C^{\infty,\infty}(
(0,T]\times \bb T)$.

\begin{lemma}
\label{pi4}
The set $\Pi_{4}$ is $I_{T}(\cdot|\ga)$-dense.
\end{lemma}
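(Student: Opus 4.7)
The plan is to approximate each $\pi\in\Pi_{3}$ by a time-mollified path $\pi^{n}$, adapting the scheme of Lemma \ref{pi3} but convolving in the time variable rather than the spatial one. Fix $\pi(t,du)=\rho(t,u)du$ in $\Pi_{3}$ with $I_{T}(\pi|\ga)<\infty$. Because $\pi\in\Pi_{1}$, $\rho$ is a weak solution of \eqref{rdeq} on some interval $[0,2\de]$, hence by parabolic regularity (Proposition \ref{reg}) it belongs to $C^{\infty,\infty}((0,2\de]\times\T)$; because $\pi\in\Pi_{3}\subset\Pi_{2}$ we also have $\rho(t,\cdot)\in C^{\infty}(\T)$ for every $t\in(0,T]$, and $\rho$ is bounded away from $0$ and $1$ on $[\de',T]\times\T$ for each $\de'>0$. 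These properties are what will allow the construction to deliver $C^{\infty,\infty}$ regularity on $(0,T]\times\T$.

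Extend $\rho$ to $[-1,T+1]$ by $\rho_{t}=\rho_{0}$ for $t<0$ and $\rho_{t}=\rho_{T}$ for $t>T$. Let $\phi\in C^{\infty}(\R)$ be a symmetric mollifier supported in $[-1,1]$ with unit mass, and let $\a:[0,T]\to[0,1]$ be the same type of smooth nondecreasing cutoff used in the proof of Lemma \ref{pi3}: $\a\equiv 0$ on $[0,\de]$, $\a\equiv 1$ on $[2\de,T]$, and flat (all derivatives vanishing) at the endpoints $\de$ and $2\de$. Define
$$\rho^{n}(t,u) \;=\; \int_{-1}^{1}\rho\Bigl(t+\frac{\a(t)}{n}r,\,u\Bigr)\,\phi(r)\,dr, \qquad \pi^{n}(t,du)\;=\;\rho^{n}(t,u)\,du.$$
On $[0,\de]$ one has $\rho^{n}=\rho$, while on $[2\de,T]$ one recovers the standard time convolution. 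The smoothness of $\a$ and its flatness at the junctions, combined with the spatial smoothness $\rho(\tau,\cdot)\in C^{\infty}(\T)$ for $\tau>0$ and the parabolic regularity on $(0,2\de]$, yields $\rho^{n}\in C^{\infty,\infty}((0,T]\times\T)$, so $\pi^{n}\in\Pi_{4}$. Convergence $\pi^{n}\to\pi$ in $D([0,T],\M_{+})$ is routine from the continuity $t\mapsto\rho_{t}$ in the weak topology (Proposition \ref{cont}).

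For the rate function, lower semicontinuity (Theorem \ref{lsc}) gives $I_{T}(\pi|\ga)\le\varliminf_{n}I_{T}(\pi^{n}|\ga)$. For the matching upper bound, decompose $J_{G}(\pi^{n})$ into contributions from the three subintervals $[0,\de]$, $[\de,2\de]$, and $[2\de,T]$. The first contribution vanishes since $\rho^{n}=\rho$ solves the hydrodynamic equation there (Corollary \ref{zero}). On $[\de,2\de]$, $\rho$ is bounded away from $0$ and $1$, so $\chi(\rho^{n})$, $B(\rho^{n})$, $D(\rho^{n})$ are uniformly bounded while $\rho^{n}\to\rho$ in $L^{1}$, which delivers convergence of this contribution to its value for $\pi$. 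On $[2\de,T]$, $\rho^{n}$ is a pure time-average of $\rho$, so concavity of $B$, $D$, $\chi$ supplies Jensen inequalities such as $B(\rho^{n})\ge B(\rho)*_{t}\phi^{1/n}$; a Fubini argument then transfers the convolution from the density onto the smooth test functions $(\nabla G)^{2}$ and $e^{\pm G}-1$, and uniform continuity of $G\in C^{1,2}$ in time makes the resulting shift in the test function $o_{n}(1)$, yielding $J_{G}(\pi^{n})|_{[2\de,T]}\le J_{G}(\pi)|_{[2\de,T]}+o_{n}(1)$ in the spirit of Lemma \ref{pi3}.

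The main obstacle will be producing an $o_{n}(1)$ remainder that is \emph{uniform in $G$} in the concavity/Fubini estimate on $[2\de,T]$. Direct application of concavity to a term such as $-\lan B(\rho^{n}), e^{G}-1\ran$ respects the sign only where $e^{G}\ge 1$, so for general $G$ one must combine concavity with the Lipschitz continuity of $B$, $D$, $\chi$, exactly as in the proof of Lemma \ref{pi2}, to dominate the defect by $C_{0}\{n^{-1}+\int_{0}^{T}\|\rho^{n}_{t}-\rho_{t}\|_{1}\,dt\}$, and simultaneously absorb the boundary correction near $t=T$ coming from the extension $\rho_{t}\equiv\rho_{T}$ for $t>T$. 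Once this uniform control is obtained, taking the supremum over $G$ closes the upper bound and completes the density argument.
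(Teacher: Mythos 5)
Your overall strategy mirrors the paper's: mollify in time with a cutoff $\alpha$ that vanishes near $t=0$, decompose $I_T$ into contributions on $[0,\delta]$, $[\delta,2\delta]$, $[2\delta,T]$, dispose of the first contribution since $\rho^n=\rho$ solves \eqref{rdeq} there, and handle the last by the concavity/Lipschitz scheme of Lemma \ref{pi2}. Two cosmetic differences: the paper uses a \emph{one-sided forward} mollifier $\phi$ supported in $[0,1]$, so that only a forward extension past $t=T$ is needed, and that extension is taken to be the hydrodynamic flow $\tilde\lambda$ started from $\rho_T$ rather than a constant. Your symmetric mollifier forces you to reach both backward and forward of the transition zone $[\delta,2\delta]$; in particular the mollification on $[\delta,2\delta]$ samples $\rho$ up to time $2\delta+1/n$, which lies outside your stated solvability window $[0,2\delta]$, so you should take the hydrodynamic window to be $[0,3\delta]$ (as the paper does). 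The constant forward extension $\rho_t\equiv\rho_T$ is probably salvageable but the flow extension is cleaner because the extended path keeps solving the equation.

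The genuine gap is in your treatment of the middle interval $[\delta,2\delta]$. You argue that $\chi(\rho^n),B(\rho^n),D(\rho^n)$ are uniformly bounded away from $0$ and that $\rho^n\to\rho$ in $L^1$, and conclude that the contribution on $[\delta,2\delta]$ ``converges to its value for $\pi$.'' This is only a pointwise-in-$G$ statement: $J_G^{[\delta,2\delta]}(\pi^n)\to J_G^{[\delta,2\delta]}(\pi)$ for each fixed $G\in C^{1,2}$. It does \emph{not} give $\sup_G J_G^{[\delta,2\delta]}(\pi^n)\to 0$, because the weights $(\nabla G)^2$ and $e^{\pm G}-1$ are unbounded over $C^{1,2}$, so $L^1$-control of $\rho^n-\rho$ yields no $G$-uniform estimate, and the supremum is not continuous under pointwise convergence. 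The paper closes this by a structurally different argument: it uses that $\rho$ solves \eqref{rdeq} on $[\delta,3\delta]$ to write $\partial_t\rho^n=\partial_t\rho*\Phi(\alpha_n(t),\cdot)+r^n$ with $r^n(t,u)=\int_{\R}\rho(t+s,u)\,\partial_t[\Phi(\alpha_n(t),s)]\,ds$, shows $r^n\to 0$ \emph{uniformly} on $(\delta,2\delta)\times\T$ via the Lipschitz-in-time regularity of the smooth solution $\rho$, substitutes the PDE into $J_G$ so the contribution collapses to the explicit variational expression \eqref{sup5.3}, and then (as in Lemma \ref{pi3}) bounds the supremum over $G$ by its value at the explicit maximizer $m^n_t$, which tends to $0$ uniformly because $\rho^n$ is uniformly bounded in $(0,1)$ and $F^n_t-(B(\rho^n_t)-D(\rho^n_t))+r^n_t\to 0$. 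Without this computation of $\partial_t\rho^n$ and the uniform vanishing of the drift error, the middle-interval contribution is not controlled, and the proof does not go through.
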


\begin{proof}
Fix $\pi(t,du)=\rho(t,u)du$ in $\Pi_{3}$ such that
$I_{T}(\pi|\ga)<\infty$.  Since $\pi$ belongs to the set $\Pi_{1}$, we
may assume that the density $\rho$ solves the equation \eqref{rdeq} in
the time interval $[0,3\de]$ for some $\de>0$.  Take a smooth
nonnegative function $\phi:\R\to\R$ with the following properties:
\begin{align*}
\text{supp}\ \phi\;\subset\;[0,1] \;\; \text{ and }\;\; 
\int_{0}^{1}\phi(s)ds \;=\; 1\;.
\end{align*}

Let $\a$ be the function introduced in the previous lemma.  For each
$\e>0$ and $n\in\N$, let
\begin{equation*}
\Phi(\e,s)\;:=\; \frac{1}{\e}\phi(\frac{s}{\e})\;, \quad 
\a_{n}(t) \;:=\; \frac{1}{n}\a(t)\;,
\end{equation*}
and let $\pi^{n}(t,du)=\rho^{n}(t,u)du$ where
\begin{align*}
\rho^{n}(t,u) \;=\; \int_{0}^{1}\rho(t+\a_{n}(t)s,u)\phi(s) ds
\;=\; \int_{\R} \rho(t+s,u)\Phi(\a_{n}(t), s) ds\;.
\end{align*}
In the above formula, we extend the definition of $\rho$ to $[0,T+1]$
by setting $\rho_{t}=\tilde{\lambda}_{t-T}$ for $T \le t \le T+1$,
where $\tilde{\lambda} :[0,1]\times\T\to[0,1]$ stands for the unique
weak solution of the equation \eqref{rdeq} with initial profile
$\rho_{T}$. Note that it follows from the construction of $\rho^n$ that
$\rho_t=\rho_t^n$ for any $t\in[0,\de]$, therefore, $\rho^n$ is
a weak solution of the equation \eqref{rdeq} in time interval $[0,\de]$.

It is clear that $\pi^{n}$ converges to $\pi$ in
$D([0,T],\mathcal{M}_{+})$.  Since on the time interval $(0, 3\de)$,
the function $\rho$ is smooth in time, for $n$ large enough the
function $\rho^{n}$ is smooth in time on $(0,T]\times \bb T$.
Hence, $\pi^{n}$ belongs to $\Pi_{4}$ and $\mathcal{Q}(\pi^{n})$ is finite.

The remaining part of the proof is similar to the one of the previous
lemma. We only present the arguments leading to the bound
$\limsup_{n\to\infty} I_{T}(\pi^{n}|\ga) \le I_{T}(\pi|\ga)$. The rate
function can be decomposed in three pieces, two of which can be
estimated as in Lemma \ref{pi3}. We consider the contribution to
$I_{T}(\pi^{n}|\ga)$ of the piece of the trajectory corresponding to
the time interval $[\de, 2\de]$.

The derivative of $\rho^{n}$ in time on $(\de, 2\de)$ is computed as
\begin{equation*}
\partial_{t}\rho^{n}(t,u) \;=\;
\int_{\R} \partial_{t}\rho(t+s,u)\Phi(\a_{n}(t),s) ds
\;+\; \int_{\R}\rho(t+s,u) \partial_{t} [\Phi(\a_{n}(t), s)] ds\;.
\end{equation*}
It follows from this equation and from the fact that the density
$\rho$ solves the hydrodynamic equation \eqref{rdeq} on the time
interval $[\de,3\de]$, that for any function $G$ in
$C^{1,2}([0,T]\times\T)$,
\begin{equation*}
\lan\rho_{2\de}^{n}, G_{2\de}\ran - \lan\rho_{\de}^{n}, G_{\de}\ran
- \int_{\de}^{2\de} dt\ \lan\rho_{t}^{n}, \partial_{t}G_{t}\ran 
\;=\; \int_{\de}^{2\de} dt\ \Big\{\lan\rho_{t}^{n}, \frac{1}{2}\De G_{t}\ran
+\lan F_{t}^{n}+r_{t}^{n}, G_{t}\ran \Big\}\;,
\end{equation*}
where
\begin{align*}
& F^{n}(t,u) \;:=\; \int_{\R}F(\rho(t+s,u)) \, \Phi(\a_{n}(t),s) \, ds\;, \\
&\quad r^{n}(t,u) \;:=\; \int_{\R}\rho(t+s,u) \,
\partial_{t} [\Phi(\a_{n}(t),s)] \, ds\; .
\end{align*}
Therefore, the second contribution can be bounded above by
\begin{align}
\label{sup5.3}
\sup_{G\in C^{1,2}([0,T]\times\T)}
\Big\{ \int_{\de}^{2\de}dt\ \lan (F_{t}^{n}+r_{t}^{n})G_{t} 
- B(\rho_{t}^{n})(e^{G_{t}}-1)
-D(\rho_{t}^{n})(e^{-G_{t}}-1)\ran \Big\}\;.
\end{align}

We now show that $r^{n}(t,u)$ converges to $0$ as $n\to\infty$
uniformly in $(t,u)\in(\de,2\de)\times\T$.  Let $(t,u)$ in
$(\de,2\de)\times\T$.  Since $\int_{\R}\partial_{t}[\Phi(\a_{n}(t),
s)] ds = \partial_{t}[\int_{\R}\Phi(\a_{n}(t), s) ds ] = 0$,
$r^{n}(t,u)$ can be written as
\begin{equation*}
\int_{\R}\{\rho(t+s,u)-\rho(t,u)\} \, \partial_{t}[\Phi(\a_{n}(t),
s)]\, ds\; .
\end{equation*}
Since $\rho$ is Lipschitz continuous on $[\de, 3\de]\times\T$, there
exists a positive constant $C(\de)>0$, depending only on $\de$, such
that
\begin{equation*}
|\rho(t+s,u)-\rho(t,u)| \;\le\; C(\de)s \;,
\end{equation*}
for any $(t,u)\in[\de, 2\de]\times\T$ and $s\in[0,\de]$.  Therefore
$r^{n}(t,u)$ is bounded above by
\begin{equation*}
C(\de)\int_{\R} s \, \big|\partial_{t} [\Phi(\a_{n}(t), s)]\big| \, ds \;.
\end{equation*}
It follows from a simple computation and from the change of variables
$\a_{n}(t)s=\bar{s}$ that
\begin{equation*}
\int_{\R} s \, \big|\partial_{t} [\Phi(\a_{n}(t), s)] \big| \, ds
\;\le\; \frac{\|\a'(t)\|_{\infty}}{n} \int_{0}^{1} \big\{ 
s\phi(s) + s^{2}|\phi'(s)| \big\}\, ds\; .
\end{equation*}
Therefore $r^{n}(t,u)$ converges to $0$ as $n\to\infty$ uniformly
in $(t,u)\in(\de,2\de)\times\T$.

To complete the proof, it remains to take a supremum in $G\in
C^{1,2}([0,T]\times\T)$ in formula \eqref{sup5.3} and to let
$n\to\infty$.
\end{proof}

\begin{proof}[Proof of Theorem \ref{dense}]
From the previous lemma, all we need is to prove that $\Pi_{4}$ is
contained in $\Pi$.  Let $\pi(t,du)=\rho(t,u)du$ be a path in
$\Pi_{4}$.  There exists some $\de>0$ such that the density $\rho$
solves the equation \eqref{rdeq} on time interval $[0,2\de]$.  In
particular, the density $\rho$ also solves the equation \eqref{peq}
with $H=0$ on time interval $[0,2\de]$.  On the one hand, since the
density $\rho$ is smooth on $[\de,T]$ and there exists $\e>0$ such
that $\e\le\rho(t,u)\le1-\e$ for any $(t,u)\in[\de,T]\times\T$, from
Lemma 2.1 in \cite{jlv}, there exits a unique function $H$ in
$C^{1,2}([\de,T]\times\T)$ satisfying the equation \eqref{peq} with
$\rho$ on $[\de,T]$, and it is proved that $\pi$ belongs to $\Pi$.
\end{proof}

\begin{proof}[Proof of Theorem \ref{mt1}]
We have already proved in Section \ref{sec4} that the rate function is
lower semicontinuous and that it has compact level sets.

Recall from the beginning of this section the definition of the set
$\Pi$. It has been proven in \cite{jlv} that for each closed subset
$\mathcal{C}$ of $D([0,T],\mathcal{M}_{+})$,
\begin{equation*}
\varlimsup_{N\to\infty}\frac{1}{N}\log{Q_{\eta^{N}}(\mathcal{C})} \;\le\; 
-\inf_{\pi\in\mathcal{C}}I_{T}(\pi|\ga)\;,
\end{equation*}
and that for each open subset $\mathcal{O}$ of
$D([0,T],\mathcal{M}_{+})$,
\begin{equation*}
\varliminf_{N\to\infty}\frac{1}{N}\log{Q_{\eta^{N}}(\mathcal{O})} \;\ge\; 
-\inf_{\pi\in\mathcal{O}\cap\Pi}I_{T}(\pi|\ga)\;.
\end{equation*}
Since $\mathcal{O}$ is open in $D([0,T],\mathcal{M}_{+})$, by
Theorem \ref{dense},
\begin{equation*}
\inf_{\pi\in\mathcal{O}\cap\Pi}I_{T}(\pi|\ga)
\;=\;\inf_{\pi\in\mathcal{O}}I_{T}(\pi|\ga)\;,
\end{equation*}
which completes the proof.
\end{proof}

\section{Appendix}

In sake of completeness, we present in this section results on the
Cauchy problem \eqref{rdeq}.  

\begin{definition}
\label{wsol}
A measurable function $\rho:[0,T]\times\T\to[0,1]$ is said to be a
weak solution of the Cauchy problem \eqref{rdeq} in the layer
$[0,T]\times\T$ if, for every function $G$ in
$C^{1,2}([0,T]\times\T)$, 
\begin{align}
\label{weq}
\lan \rho_{T}, G_{T}\ran - \lan \gamma, G_{0} \ran
&- \int_{0}^{T} dt \lan \rho_{t}, \partial_{t}G_{t} \ran  \notag \\
&=\;\dfrac{1}{2} \int_{0}^{T} dt \lan \rho_{t}, \De G_{t} \ran 
+ \int_{0}^{T} dt \lan F(\rho_{t}), G_{t} \ran\;.
\end{align} 
\end{definition}
For each $t\ge0$, let $P_{t}$ be the semigroup on $L^{2}(\T)$ generated by
$(1/2)\De$.

\begin{definition}\label{msol}
A measurable function $\rho:[0,T]\times\T\to[0,1]$
is said to be a mild solution of the Cauchy problem \eqref{rdeq}
in the layer $[0,T]\times\T$ if, for any t in $[0,T]$,
it holds that
\begin{align}\label{meq}
\rho_{t} \;=\; P_{t}\gamma + \int_{0}^{t} P_{t-s}F(\rho_{s}) ds\;.
\end{align} 
\end{definition}

The first proposition asserts existence and uniqueness of weak and
mild solutions, a well known result in the theory of partial
differential equations.  We give a brief proof because uniqueness of
the Cauchy problem \eqref{rdeq} plays an important role in the proof
of Theorem \ref{hydrodynamics}.

\begin{proposition}
\label{exun}
Definitions \ref{wsol} and \ref{msol} are equivalent. Moreover, there
exists a unique weak solution of the Cauchy problem \eqref{rdeq}.
\end{proposition}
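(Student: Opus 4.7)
The strategy is standard: first establish that the two notions of solution coincide, and then construct the unique solution using the Picard iteration scheme applied to the mild formulation \eqref{meq}, which is far easier to manipulate than the weak formulation.

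For the equivalence of Definitions \ref{wsol} and \ref{msol}, I would argue both directions using properties of the heat semigroup $P_t$. For weak $\Rightarrow$ mild, fix $t \in (0,T]$ and $H \in C^{\infty}(\T)$ and observe that the function $G(s,u) := (P_{t-s}H)(u)$ lies in $C^{1,2}([0,t]\times\T)$ and satisfies $\partial_s G_s + (1/2) \De G_s = 0$. Substituting this $G$ into the weak equation \eqref{weq} (applied on $[0,t]$, which is justified since the restriction of a weak solution is itself a weak solution) collapses the left-hand side to $\lan \rho_t, H\ran - \lan \ga, P_t H\ran$ and the right-hand side to $\int_0^t \lan F(\rho_s), P_{t-s} H\ran\,ds$. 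Self-adjointness of $P_s$ on $L^2(\T)$ and density of $C^{\infty}(\T)$ in $L^2(\T)$ then yield \eqref{meq}. For the converse, starting from \eqref{meq} and testing against any $G \in C^{1,2}([0,T]\times\T)$, a direct time-differentiation using $(d/dt)\lan P_t \mu, G\ran = (1/2)\lan P_t \mu, \De G\ran$ followed by integration from $0$ to $T$ recovers \eqref{weq}.

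For existence, extend $F$ to a globally Lipschitz function $\tilde F : \R \to \R$ that agrees with $F$ on $[0,1]$ and satisfies $\tilde F(r) \le 0$ for $r \ge 1$ and $\tilde F(r) \ge 0$ for $r \le 0$ (this is possible because $F(1) = -D(1) \le 0$ and $F(0) = B(0) \ge 0$, since $B(1) = 0$, $D(0) = 0$, and both $B, D \ge 0$ on $[0,1]$). Denote its Lipschitz constant by $L$. Define $\rho^{(0)}_t := P_t \ga$ and iteratively $\rho^{(n+1)}_t := P_t \ga + \int_0^t P_{t-s}\, \tilde F(\rho^{(n)}_s)\,ds$. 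Since $P_t$ is a Markov semigroup and hence an $L^{\infty}(\T)$-contraction, one obtains $\|\rho^{(n+1)}_t - \rho^{(n)}_t\|_{\infty} \le L \int_0^t \|\rho^{(n)}_s - \rho^{(n-1)}_s\|_{\infty}\,ds$, whence by induction $\|\rho^{(n+1)} - \rho^{(n)}\|_{L^{\infty}([0,T]\times\T)} \le C\,(LT)^n/n!$, a summable series. The limit $\rho \in L^{\infty}([0,T]\times\T)$ is a fixed point of \eqref{meq} with $\tilde F$ in place of $F$. The key remaining point is to show $0 \le \rho \le 1$: from $\tilde F(\rho_s) \le \tilde F(1) + L(\rho_s - 1)^{+} \le L(\rho_s - 1)^{+}$ and the fact that $P_{t-s}$ preserves nonnegativity and $L^\infty$-norms, evaluating \eqref{meq} pointwise and taking the supremum in $u$ gives $\|(\rho_t - 1)^{+}\|_{\infty} \le L \int_0^t \|(\rho_s - 1)^{+}\|_{\infty}\,ds$, so Gronwall forces $\rho \le 1$; the bound $\rho \ge 0$ is symmetric. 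Once $\rho \in [0,1]$, we have $\tilde F(\rho) = F(\rho)$, and \eqref{meq} is verified with $F$ itself.

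Uniqueness is then a one-line Gronwall argument on the mild formulation: if $\rho_1, \rho_2$ are two weak solutions, by the equivalence each is a mild solution in $[0,1]$, so $\|\rho_1(t) - \rho_2(t)\|_{\infty} \le L \int_0^t \|\rho_1(s) - \rho_2(s)\|_{\infty}\,ds$, whence $\rho_1 \equiv \rho_2$. I expect the main technical obstacle to lie in the $[0,1]$-invariance step, which hinges on choosing the extension $\tilde F$ with the correct boundary signs; the conditions $B(1) = 0$ and $D(0) = 0$ built into the model via \eqref{02} are precisely what makes this choice possible.
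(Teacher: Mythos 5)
Your equivalence argument and your overall plan (equivalence of weak/mild solutions, then Picard iteration on the mild formulation) match the paper's proof in spirit: the paper also tests the weak equation against the backward heat flow $G(s,u)=(P_{t-s}g)(u)$, though it uses an explicit cutoff family $G^\delta$ vanishing past time $t+\delta$ in place of your appeal to ``restriction of a weak solution is a weak solution'' (that appeal is fine but is itself proved by essentially the same cutoff, so the two routes are the same modulo presentation). Your existence discussion is considerably more detailed than the paper's one-line ``method of successive approximation.''

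There is, however, a genuine gap in your $[0,1]$-invariance step. The inequality you invoke,
\begin{equation*}
\tilde F(\rho_s) \;\le\; \tilde F(1) + L(\rho_s-1)^{+} \;\le\; L(\rho_s-1)^{+},
\end{equation*}
is false pointwise whenever $\rho_s(u)<1$: there $(\rho_s-1)^{+}=0$, yet $\tilde F(\rho_s)=F(\rho_s)$ can be strictly positive on $[0,1)$ (e.g.\ $F(0)=B(0)>0$ in any nontrivial model; concretely $B(\rho)=1-\rho$, $D(\rho)=\rho$ gives $F(0)=1$). Hence the bound $\rho_t-1\le \int_0^t P_{t-s}\,L(\rho_s-1)^{+}\,ds$ does not follow, and the Gronwall argument does not close. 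The difficulty is not cosmetic: because $P_{t-s}$ is order-preserving but not sign-separating, the positive contributions of $F(\rho_s)$ on the set $\{\rho_s<1\}$ can spread to points where $\rho$ is near $1$, and no bound in terms of $\|(\rho_s-1)^{+}\|_\infty$ alone is available. Establishing $0\le\rho\le1$ genuinely requires a parabolic comparison/maximum-principle argument (this is the content of Proposition~\ref{mono}, whose proof the paper defers to \cite{ds}), or an equivalent invariant-region theorem; the conditions $F(1)\le 0$, $F(0)\ge 0$ you correctly identified are the hypotheses of that theorem, but they do not feed into a one-line Gronwall estimate of the type you wrote.

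Minor remark: your extension $\tilde F$ and the observation that $B(1)=0$, $D(0)=0$ provide the correct boundary signs is exactly the right structural input; only the mechanism for converting that sign condition into invariance needs to be replaced.
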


\begin{proof}
Since $F$ is Lipschitz continuous, 
by the method of successive approximation,
there exists a unique mild solution of the Cauchy problem \eqref{rdeq}.
Therefore to conclude the proposition it is enough to show that
the above two notions of solutions are equivalent.

Assume that $\rho:[0,T]\times\T\to[0,1]$ is a weak solution of the
Cauchy problem \eqref{rdeq}.  Fix a function $g$ in $C^{2}(\T)$ and
$0\le t \le T$.  For each $\de>0$, define the function $G^{\de}$ as
\begin{align*}
G^{\de}(s,u) \;=\; 
\begin{cases}
     (P_{t-s}g)(u)  & \text{if $0\le s\le t$}\;, \\
     \de^{-1}(t+\de-s)g(u)  & \text{if $t\le s \le t+\de$}\;,  \\
     0  & \text{if $t+\de\le s \le T$}\;.
\end{cases}
\end{align*}
One can approximate $G^{\de}$ by functions in
$C^{1,2}([0,T]\times\T)$.  Therefore, by letting $\de\downarrow0$ in
\eqref{weq} with $G$ replaced by $G^{\de}$ and by a summation by
parts, 
\begin{align}
\label{meq1}
\lan \rho_{t}, g\ran \;=\; \lan P_{t}\gamma,g \ran
 + \int_{0}^{t} \lan P_{t-s}F(\rho_{s}),g \ran ds\;.
\end{align}
Since \eqref{meq1} holds for any function $g$ in $C^{2}(\T)$,
$\rho$ is a mild solution of the Cauchy problem \eqref{rdeq}.

Conversely, assume that $\rho:[0,T]\times\T\to[0,1]$ is a weak
solution of the Cauchy problem \eqref{rdeq}.  In this case,
\eqref{meq1} is true for any function $g$ in $C^{2}(\T)$ and any $0\le
t\le T$.  Differentiating \eqref{meq1} in $t$ gives that
\begin{equation*}
\dfrac{d}{dt} \lan \rho_{t}, g\ran \;=\;
\dfrac{1}{2}\lan \rho_{t}, \De g \ran + \lan F(\rho_{t}), g \ran\;.
\end{equation*}
Therefore \eqref{weq} holds for any function $G(t,u)=g(u)$ in $C^{2}(\T)$.
It is not difficult to extend this to any function $G$ in
$C^{1,2}([0,T]\times\T)$.
Hence $\rho$ is a weak solution of the Cauchy problem \eqref{rdeq}.
\end{proof}

The following two propositions assert the smoothness and the monotonicity of 
weak solutions of the Cauchy problem \eqref{rdeq}.

\begin{proposition}
\label{reg}
Let $\rho$ be the unique weak solution of the Cauchy problem \eqref{rdeq}.
Then $\rho$ is infinitely differentiable over $(0,\infty)\times\T$.
\end{proposition}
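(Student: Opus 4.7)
The plan is to exploit the mild formulation established in Proposition \ref{exun},
\begin{equation*}
\rho_t \;=\; P_t \gamma \;+\; \int_0^t P_{t-s}\, F(\rho_s)\, ds\;,
\end{equation*}
together with the smoothing effect of the heat semigroup $P_t$ on the torus and the fact that the polynomial $F$ preserves smoothness when composed with a bounded function. The heat kernel $p_t(u)$ associated with $(1/2)\De$ on $\T$ is given by a theta series which is $C^\infty$ jointly in $(t,u) \in (0,\infty) \times \T$, so that everything reduces to convolving this smooth kernel with the bounded source $F(\rho_s)$.

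First I would show that $(t,u) \mapsto (P_t \gamma)(u)$ is $C^\infty$ on $(0,\infty) \times \T$. Writing $(P_t\gamma)(u) = \int_\T p_t(u-v)\,\gamma(v)\, dv$, differentiation under the integral sign is justified for $t$ bounded away from $0$, since all spatial and temporal derivatives of $p_t$ are uniformly integrable against the bounded measurable function $\gamma$ on any strip $\{t \ge t_0\}$.

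Next I would run a bootstrap argument on the Duhamel term $U(t,u) := \int_0^t (P_{t-s}\, F(\rho_s))(u)\, ds$. Since $\rho$ takes values in $[0,1]$ and $F$ is a polynomial, $F(\rho) \in L^\infty((0,\infty) \times \T)$. Using the kernel bounds $\|\nabla^k p_s\|_1 \le C_k s^{-k/2}$, which are integrable near $s = t$ for $k \le 1$ and give interior H\"older estimates via standard parabolic theory, $U$ is H\"older continuous on compact subsets of $(0,\infty) \times \T$. Consequently $F(\rho) = F(P_t\gamma + U)$ is H\"older too; feeding this back into the Duhamel formula and invoking parabolic Schauder estimates upgrades $U$ to $C^{1,2}_{\mathrm{loc}}$, hence the same for $\rho$. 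Iterating, each step gains two spatial derivatives and one time derivative, yielding $\rho \in C^\infty((0,\infty)\times\T)$.

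The main obstacle is organising the bootstrap cleanly in the presence of the singularity of $p_{t-s}$ at $s = t$. A convenient way to bypass this is to work in the Sobolev scale $H^s(\T)$: using the Fourier description $\widehat{p_t}(k) = e^{-2\pi^2 k^2 t}$, one may split the integral at $t_0/2$ (for fixed $t_0 > 0$) and apply the decay of the heat semigroup to the first piece, exploiting that $H^s(\T)$ is a Banach algebra for $s > 1/2$ so that $F(\rho_s) \in H^s$ whenever $\rho_s$ is. Combined with the smoothness of $P_t\gamma$, this yields $\rho_t \in H^s(\T)$ for every $s$ and every $t > 0$, and by Sobolev embedding $\rho \in C^\infty((0,\infty)\times \T)$.
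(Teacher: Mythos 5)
The paper proves Proposition \ref{reg} only by reference, deferring to Proposition 2.1 of De Mottoni and Schatzman \cite{ds}, so there is no in-paper argument against which to compare yours. Your proof via the mild (Duhamel) formulation and a parabolic bootstrap --- smoothness of $P_t\gamma$ off $t=0$, the observation that $F(\rho)\in L^\infty$ since $0\le\rho\le 1$ and $F$ is a polynomial, and then iterated use of the heat semigroup's smoothing either through interior parabolic Schauder estimates or through the Sobolev scale $H^s(\T)$ together with the Banach algebra property for $s>1/2$ --- is the standard route to this regularity statement and is correct; it is presumably also what the cited reference does. Two small points worth tidying if you write it out in full: first, each pass of the Sobolev bootstrap gains strictly fewer than two spatial derivatives (one needs $(t-s)^{-\epsilon/2}$ integrable, hence $\epsilon<2$) and requires restarting the Duhamel integral from a slightly later base time, which is harmless since the claim is for a fixed $t>0$; second, Sobolev embedding delivers spatial smoothness at each time, and to obtain joint smoothness in $(t,u)$ one should then differentiate the equation $\partial_t\rho = \tfrac{1}{2}\De\rho + F(\rho)$ repeatedly in $t$ --- your Schauder variant yields this automatically, whereas the $H^s$ route needs this extra closing remark.
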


\begin{proposition}
\label{mono}
Let $\rho_{0}^{1}$ and $\rho_{0}^{2}$ be two initial profiles.
Let $\rho^{j}$, $j=1,2$, be the weak solutions of
the Cauchy problem \eqref{rdeq} with initial condition
$\rho_{0}^{j}$.
Assume that
\begin{equation*}
m\{u\in\T:\rho_{0}^{1}(u)\le\rho_{0}^{2}(u)\} \;=\; 1\;,
\end{equation*}
where $m$ is the Lebesgue measure on $\T$.
Then, for any $t\ge0$, it holds that
\begin{equation*}
m\{u\in\T\ :\ \rho^{1}(t,u)\le\rho^{2}(t,u)\} \;=\; 1\;.
\end{equation*}
\end{proposition}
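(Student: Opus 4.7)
\medskip
\noindent\textbf{Proof proposal.}
The plan is to pass from the weak formulation to the mild one using Proposition \ref{exun}, shift the nonlinearity so that it becomes monotone, and then propagate the order along a Picard iteration using the positivity-preserving property of the heat semigroup on $\T$.

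Since $F=B-D$ is a polynomial it is Lipschitz on $[0,1]$; choose $L>0$ so that $F'(\rho)+L\ge 0$ for $\rho\in[0,1]$ and set $\tilde F(\rho):=F(\rho)+L\rho$. Extend $\tilde F$ to a bounded nondecreasing Lipschitz function on $\R$ by making it constant outside $[0,1]$. Denote by $S_t:=e^{-Lt}P_t$ the semigroup generated by $(1/2)\De-L$; then the Cauchy problem \eqref{rdeq} rewrites as the integral equation
\begin{equation*}
\rho_t \;=\; S_t\rho^j_0 \;+\; \int_0^t S_{t-s}\,\tilde F(\rho_s)\,ds\;.
\end{equation*}
By the equivalence of weak and mild solutions and by uniqueness (both given in Proposition \ref{exun}), and since the weak solution $\rho^j$ takes values in $[0,1]$ where the extension of $\tilde F$ agrees with $F+L\,\mathrm{id}$, $\rho^j$ is the unique fixed point of this equation.

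Running the successive approximation scheme of Proposition \ref{exun}, start with $\rho^{j,(0)}_t:=S_t\rho^j_0$ and set
\begin{equation*}
\rho^{j,(n+1)}_t \;:=\; S_t\rho^j_0 \;+\; \int_0^t S_{t-s}\,\tilde F(\rho^{j,(n)}_s)\,ds\;,\qquad j=1,2\;.
\end{equation*}
These converge uniformly on $[0,T]\times\T$ to $\rho^j$. The core of the proof is the induction on $n$ showing $\rho^{1,(n)}(t,u)\le\rho^{2,(n)}(t,u)$ for a.e.\ $u\in\T$ and every $t\in[0,T]$: for $n=0$ this follows from $\rho^1_0\le\rho^2_0$ a.e.\ and the positivity preservation of $P_t$ (whence of $S_t$); the inductive step uses that $\tilde F$ is nondecreasing, so $\tilde F(\rho^{1,(n)}_s)\le\tilde F(\rho^{2,(n)}_s)$ a.e., and that applying the positive operator $S_{t-s}$ preserves this inequality. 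Passing to the limit $n\to\infty$ yields the desired conclusion.

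The main technical point is keeping the Picard iterates in a range where the monotonicity of $\tilde F$ applies; extending $\tilde F$ to a globally nondecreasing Lipschitz function on $\R$ sidesteps this cleanly. A viable alternative would be a weak-PDE argument: using the regularity granted by Proposition \ref{reg}, set $w:=\rho^1-\rho^2$, test the equation against $w^+$, use that the Dirichlet form is nonnegative, bound the reaction contribution by the Lipschitz constant of $F$, and conclude via Gr\"onwall that $\|w^+_t\|_2=0$ for every $t\ge0$. I prefer the Picard route since it parallels the existence proof already recorded in Proposition \ref{exun}.
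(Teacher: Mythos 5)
Your proof is correct and, in contrast with the paper, which simply points to Proposition~2.1 of De~Mottoni--Schatzman for both Propositions~\ref{reg} and~\ref{mono}, you supply a self-contained argument. The approach is the classical one for comparison principles of semilinear parabolic equations: absorb a term $-L\rho$ into the generator so that the shifted reaction term $\tilde F = F + L\,\mathrm{id}$ becomes nondecreasing, and then propagate the order through the Picard scheme using the positivity of the heat semigroup on $\T$. The two technical points you flag are exactly the ones that matter: (i) extending $\tilde F$ to a bounded nondecreasing Lipschitz function on all of $\R$ lets the inductive comparison go through even though the iterates have no a priori reason to stay in $[0,1]$; and (ii) the iterates converge to the actual weak solution $\rho^j$ because $\rho^j$ takes values in $[0,1]$, where the extended $\tilde F$ agrees with $F+L\,\mathrm{id}$, hence $\rho^j$ is the unique fixed point and the Picard limit must coincide with it. Passing the a.e.\ inequality through the uniform (or $L^2$) limit is immediate. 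Compared with the maximum-principle / energy-method route presumably underlying the cited reference, the Picard argument has the advantage of requiring no regularity of the weak solution beyond the mild formulation (in particular, it does not invoke Proposition~\ref{reg}) and no delicate behavior of $\|(\rho^1-\rho^2)^+\|_2$ as $t\downarrow 0$; what it costs is the small bookkeeping of the nonlinearity shift and the monotone extension, which you handle cleanly. Your sketched alternative via testing against $w^+$ and Gr\"onwall is also valid, provided one notes that the mild formulation gives $\rho^j_t\to\rho^j_0$ in $L^2$ as $t\downarrow 0$, so that the Gr\"onwall bound can be anchored at $t=0$; you are right that the Picard route sidesteps this.
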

The proofs of Propositions \ref{reg} and \ref{mono}
can be found in the ones of Proposition 2.1 of \cite{ds}.

The last proposition asserts that, for any initial density profile
$\gamma$, the weak solution $\rho_{t}$ of the Cauchy problem
\eqref{rdeq} converges to some solution of the semilinear elliptic
equation \eqref{seeq}.  Recall, from Subsection \ref{hs}, the
definition of the set $\mathcal{E}$.  

\begin{proposition}
\label{conv}
Let $\rho:[0,\infty)\times\T\to[0,1]$ be the unique weak solution of
the Cauchy problem \eqref{rdeq}.  Then there exists a density profile
$\rho_{\infty}$ in $\E$ such that $\rho_{t}$ converges to
$\rho_{\infty}$ as $t\to\infty$ in $C^{2}(\T)$.
\end{proposition}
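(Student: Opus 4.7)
The plan is to exploit the gradient-flow structure of \eqref{rdeq}. Let $V$ be an antiderivative of $-F$ on $[0,1]$, and set
$$\mathcal{F}(\rho) \;=\; \int_{\T}\Big[\tfrac{1}{4}|\nabla\rho(u)|^{2} + V(\rho(u))\Big]\, du\;.$$
A direct computation shows that \eqref{rdeq} is the $L^{2}$-gradient flow of $\mathcal{F}$, so along any classical solution one has the energy identity
$$\frac{d}{dt}\mathcal{F}(\rho_{t}) \;=\; -\int_{\T}(\partial_{t}\rho_{t}(u))^{2}\, du\;.$$
Since $0\le\rho\le 1$ and $V$ is continuous, $\mathcal{F}$ is bounded below along the flow, yielding the dissipation bound $\int_{0}^{\infty}\|\partial_{t}\rho_{t}\|_{2}^{2}\, dt<\infty$.

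First I would combine Proposition \ref{reg} with a Schauder-type bootstrap on the mild formulation \eqref{meq}, using the $L^{\infty}$ bound on $\rho$ and the Lipschitz property of $F$, to obtain for every $m\ge 1$ a uniform bound on $\|\rho_{t}\|_{C^{m}(\T)}$ valid for $t\ge 1$. Arzel\`a--Ascoli then shows that $\{\rho_{t}: t\ge 1\}$ is precompact in $C^{2}(\T)$, so the $\omega$-limit set $\omega(\ga)$ is nonempty and compact in $C^{2}(\T)$. Second, I would identify $\omega(\ga)\subset\mathcal{E}$ by a LaSalle-type argument: the dissipation bound produces a sequence $t_{n}\to\infty$ along which $\|(1/2)\De\rho_{t_{n}}+F(\rho_{t_{n}})\|_{2}=\|\partial_{t}\rho_{t_{n}}\|_{2}\to 0$, and any $C^{2}$-subsequential limit $\rho_{\infty}$ of $\rho_{t_{n}}$ then satisfies \eqref{seeq}, hence lies in $\mathcal{E}$. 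Continuity of the semiflow extends this conclusion to every element of $\omega(\ga)$.

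The main obstacle is to show that $\omega(\ga)$ reduces to a single point, since $\mathcal{E}$ may be a nontrivial continuum and, a priori, $\omega(\ga)$ could be a connected subset of $\mathcal{E}$. For this I would invoke the \L{}ojasiewicz--Simon inequality: because $F$ is a polynomial, $V$ is real analytic and $\mathcal{F}$ is an analytic functional on $H^{1}(\T)$, so for each $\rho_{\infty}\in\mathcal{E}$ there exist $\theta\in(0,1/2]$, $C>0$ and an $H^{1}$-neighborhood $\mathcal{U}$ of $\rho_{\infty}$ such that
$$\big|\mathcal{F}(\rho)-\mathcal{F}(\rho_{\infty})\big|^{1-\theta}\;\le\; C\,\big\|(1/2)\De\rho+F(\rho)\big\|_{2}$$
for all $\rho\in\mathcal{U}$. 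Once the trajectory enters $\mathcal{U}$ (which it must, since $\omega(\ga)\subset\mathcal{E}$ and the orbit is $C^{2}$-precompact), the energy identity combined with this inequality produces the differential inequality $-\tfrac{d}{dt}[\mathcal{F}(\rho_{t})-\mathcal{F}(\rho_{\infty})]^{\theta}\ge c\,\|\partial_{t}\rho_{t}\|_{2}$, which upon integration gives $\int_{0}^{\infty}\|\partial_{t}\rho_{t}\|_{2}\, dt<\infty$. Hence $\rho_{t}$ is Cauchy in $L^{2}(\T)$, and combined with the $C^{2}$-precompactness of the orbit this forces convergence $\rho_{t}\to\rho_{\infty}$ in $C^{2}(\T)$ for a unique $\rho_{\infty}\in\mathcal{E}$.
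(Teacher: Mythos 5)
Your argument is correct in outline, but it follows a genuinely different route than the paper, which simply invokes Theorem~D of Chen and Matano \cite{cm}. That reference establishes convergence to a single equilibrium for one-dimensional scalar parabolic equations by means of the zero-number (Sturm) argument: the number of sign changes of $\rho_{t}-\bar\rho$ (for $\bar\rho$ an equilibrium) is nonincreasing in $t$ and drops at each degenerate zero, and this monotone integer-valued Lyapunov functional is what rules out an $\omega$-limit continuum. That technique is intrinsically one-dimensional, but it requires nothing beyond Lipschitz regularity of $F$. Your approach instead uses the gradient-flow structure plus the \L ojasiewicz--Simon inequality. This is dimension-independent and cleanly exploits the fact that $F$ is a polynomial (hence $V$ and $\mathcal{F}$ are analytic, and the linearization $-\tfrac12\De+V''(\rho_\infty)$ is a Fredholm Schr\"odinger operator on the compact torus, so the inequality applies). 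In exchange you pay with the analyticity hypothesis, which the Chen--Matano route does not need. For this model either method works, and yours would in fact be the one to reach for if the problem were posed on $\T^{d}$, $d\ge 2$.

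Two places where you should tighten the write-up. First, in the LaSalle step, ``continuity of the semiflow'' is not quite the right phrase: what you use is that $\mathcal{F}$ is constant on $\omega(\ga)$ (since $\mathcal{F}(\rho_{t})$ is monotone and converges) together with the forward invariance of $\omega(\ga)$; differentiating $\mathcal{F}$ along the flow started from any point of $\omega(\ga)$ then forces $\partial_{t}\equiv0$, giving $\omega(\ga)\subset\E$. Second, ``once the trajectory enters $\mathcal{U}$'' hides the trapping argument, which is the real content of the \L ojasiewicz--Simon method: you must first choose $t_{0}$ large so that $\rho_{t_0}$ is close to $\rho_\infty$ in $H^{1}$ \emph{and} $\mathcal{F}(\rho_{t_0})-\mathcal{F}(\rho_\infty)$ is small, and then show that the differential inequality keeps $\rho_{t}$ inside $\mathcal{U}$ for all $t\ge t_{0}$; only then does the integral bound $\int_{t_0}^{\infty}\|\partial_{t}\rho_{t}\|_{2}\,dt<\infty$ follow. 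These are standard, but as written the proof appears to assume what it needs to prove at exactly the point where $\omega(\ga)$ could a priori be a nontrivial continuum of equilibria.
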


The proof of this proposition can be found in
the one of Theorem D of \cite{cm}.

\section*{Acknowledgements}
The authors wish to thank J. Farf\'an for the careful reading
of the first version of the manuscript and his fruitful comments
and an anonymous referee for his or her helpful comments.
This research work has been initiated during the second author's stay
in Instituto Nacional de Matem\'atica Pura e Aplicada,
he acknowledges gratefully hospitality and support.
He would like to thank Professor Funaki for his constant encouragement.

\end{document}